\newtheorem {theorem}{Theorem}[section]
\newtheorem {lemma}[theorem]{{\bf Lemma}}
\newtheorem {proposition}[theorem]{{\bf Proposition}}
\theoremstyle{remark}
\theoremstyle{problem}
\theoremstyle{definition}
\newtheorem {definition}{{\bf Definition}}[section]
\theoremstyle{plain} \numberwithin {equation}{section}
\newtheorem{rmk}{Remark}[section]
\newtheorem{assumption}{Assumption}
\numberwithin{assumption}{section}
\begin{document}
	\vspace{1cm}

	\title[]{The free boundary of steady axisymmetric inviscid flow with vorticity $\uppercase\expandafter{\romannumeral1}$: near the degenerate point}
	
	\author[ Lili Du, \ \ Jinli Huang]{ Lili Du$^{\lowercase {1}}$, \ \ Jinli Huang$^{\lowercase {2}}$, \ \ Yang Pu$^{\lowercase {3}}$}
	\thanks{$^*$ This work is supported by National Nature Science Foundation of China Grant 11971331, 12125102, and Sichuan Youth Science and Technology Foundation 2021JDTD0024. }
\thanks{$^1$  E-mail: dulili@scu.edu.cn. $^2$ E-mail:
	huangjinli7253@163.com. $^3$ E-mail: puyang1011@126.com. Corresponding author}

\maketitle
\begin{center}
	Department of Mathematics, Sichuan University,
	
	Chengdu 610064, P. R. China.

\end{center}

\maketitle

\begin{abstract}
 In this paper, we investigate the singularity near the degenerate points of the  steady axisymmetric  flow with general vorticity  of an inviscid incompressible fluid   acted on by gravity and with a free surface. We called the points on the free boundary at which the gradient of the stream function vanishes as the degenerate points. The main results in this paper give the different classifications of the singularity near the degenerate points on the free surface. More precisely, we obtained that at the stagnation points, the possible profiles  must be  a Stokes corner, or a horizontal cusp, or a horizontal flatness. At the degenerate points on the symmetric axis except the origin, the wave profile must be a cusp. At the origin, the possible wave profiles must be  a Garabedian pointed bubble, or a horizontal cusp, or a horizontal flatness.
\end{abstract}

%
%
%
%


\section{Introduction and main results}

\subsection{Introduction}

In this paper and the subsequent paper \cite{DH}, we consider the singularity and the regularity of the free boundary of steady axisymmetric inviscid incompressible
flow with vorticity in gravity field near the degenerate points and the non-degenerate points, respectively. The steady axisymmetric incompressible ideal flow (the $y$-axis is the axis of symmetry) was governed by a semilinear nonhomogeneous elliptic equation with Bernoulli's type boundary condition
\begin{equation}\label{equation}
	\begin{cases}
		\begin{alignedat}{2}
			\operatorname{div}\left(\frac{1}{x}\nabla \psi\right)&=-x f(\psi) \quad && \text{ in } \Omega \cap\{\psi>0\},\\	\frac{1}{x^{2}}|\nabla \psi|^2&=-y  \quad && \text{ on }\Omega\cap\partial\{\psi>0\},
		\end{alignedat}
	\end{cases}
\end{equation}
where $\psi$ is the stream function and $\Omega$ is a connected open subset relative to the right half-plane $\mathbb{R}^{2}_{+}=\{(x,y)\in\mathbb{R}^{2}\mid x\geq 0)\}$. Here, $f(\psi)$ is the vorticity function  and is assumed to be continuous. And we denote $\partial\Omega$ the boundary of $\Omega$ relative to the right half-plane, that is, $\partial\Omega\cap\{x=0\}=\emptyset$. In the next section, we will give two physical models of the steady axisymmetric flow with general vorticity, which can explain the origin of the Bernoulli-type free boundary problem \eqref{equation}.

The dynamical boundary condition $|\nabla \psi|^2=\sqrt{-x^{2}y}$ on the free boundary $\Gamma:=\Omega\cap\partial\{\psi>0\}$ implies that the gradient of the stream function is degenerate at the symmetric axis and the $x$-axis, we called the free boundary point at which the gradient of the stream function vanishes as the \emph{degenerate point}. In particular, a free boundary point on the $x$-axis but not the origin is commonly known as a \emph{stagnation point}. In the present work, we consider the singularity of the solution near the degenerate points on the free boundary and the degenerate points are divided into three classes, namely,
\begin{equation*}
	\begin{aligned}
		&\text{Type }1. \quad\text{The stagnation point}\quad (\text{see Figure } \ref{picture:type12});\\
		&\text{Type }2. \quad\text{The degenerate point on the symmetric axis except the origin}\quad (\text{see Figure } \ref{picture:type12});\\
		&\text{Type }3. \quad\text{The  origin} \quad(\text{see Figure } \ref{picture:type3}).
	\end{aligned}
\end{equation*}

Similarly to the results on the steady axis-symmetric inviscid irrotational flow \cite{Varvaruca2012c}, the wave profile near the different types of the degenerate points possesses the different phenomena for the inviscid flow with vorticity.

\begin{figure}
	\begin{minipage}[t]{0.5\linewidth}
		\centering
		\includegraphics[width=2.2in]{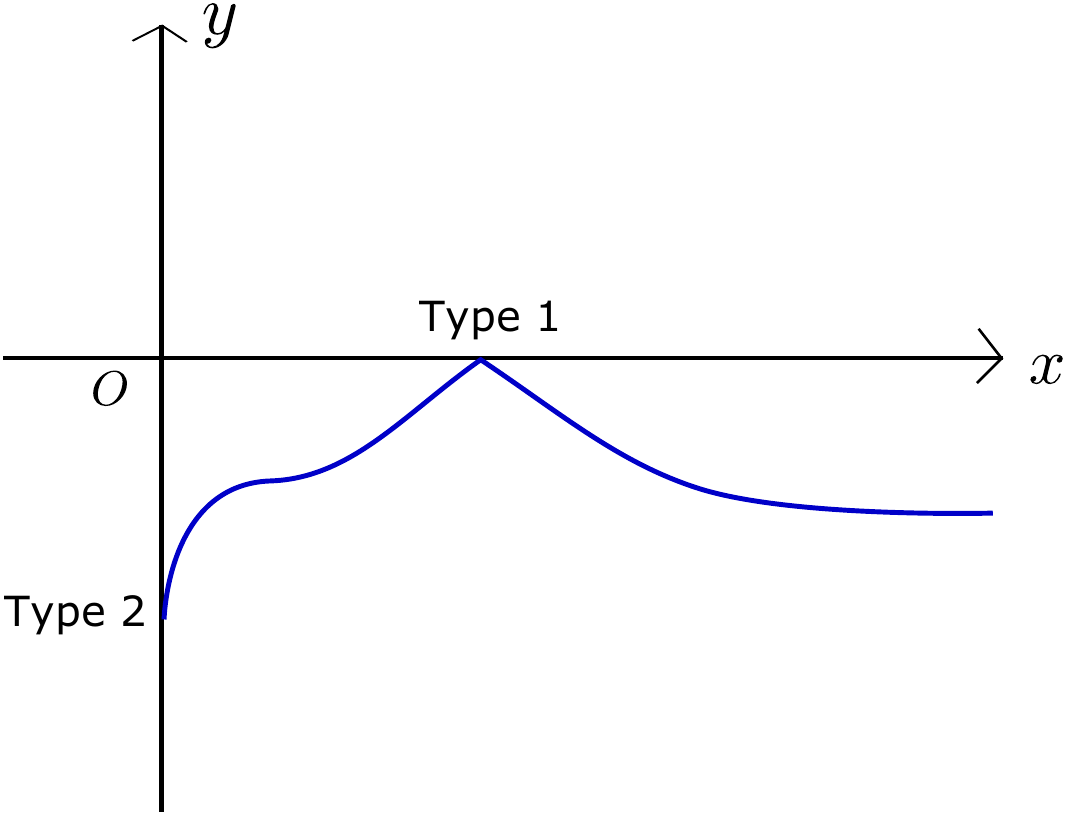}
		\caption{Type 1 and Type 2}
		\label{picture:type12}
	\end{minipage}%
	\begin{minipage}[t]{0.5\linewidth}
		\centering
		\includegraphics[width=2.2in]{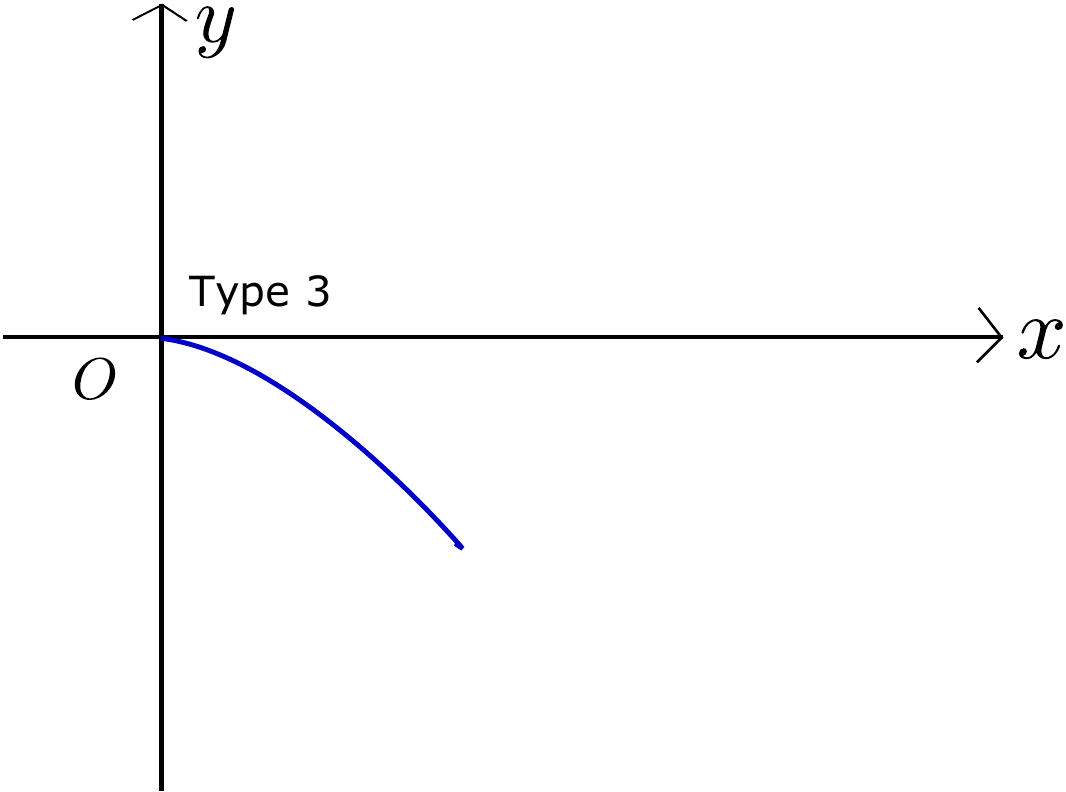}
		\caption{Type 3}
		\label{picture:type3}
	\end{minipage}
\end{figure}

 One of the behaviors of the axisymmetric solution near the Type 1 (the stagnation point), is similar to the one of two-dimensional case, which is related closely to the Stokes corner flow. This is known as the Stokes conjecture for the singularity of the two-dimensional irrotational steady water waves at the stagnation point. In 1880, G. Stokes conjectured that the free surface of two-dimensional irrotational water waves have a symmetric corner of $\frac{2\pi}{3}$ at the stagnation point \cite{Stokes1880} and that the free surface at the stagnation point is convex. There is a great body of research of
rigorous proofs of the Stokes conjecture. The earliest highlight in the research history was an article written by Nekrasov \cite{Nekrasov1921}, where he formulated the extreme-wave problem as an integral equation for a function $\theta(s)$ that gives the angle between the tangent to the free boundary and the horizontal line, namely,
 \begin{equation}\label{equation:nekrasov}
 	\theta_{\nu}(s)=\frac{1}{3\pi}\int_{0}^{\pi}\frac{\sin\theta_{\nu}(t)}{\nu^{-1}+\int_{0}{t}\sin\theta_{\nu}(u)\,du}\log\left\lvert\frac{\sin\frac{1}{2}(s+t)}{\sin\frac{1}{2}(s-t)}\right\rvert\,dt\text{, } s\in[0,\pi].
 \end{equation}
Here $\nu$ is a parameter depending on the period of the wave and the wave speed. In 1962, the first existence result for solutions of Nekrasov's integral equation  \eqref{equation:nekrasov} with $\nu>0$ was tackled by Krasovskii \cite{Krasovskii1962}, though his analysis was restricted to wave angles between $0$ and $\frac{\pi}{6}$.
Meanwhile, he conjectured that $\sup\limits_{s\in[0,\pi]}\theta_{\nu}(s)\leq\frac{\pi}{6}$ and there is no solution $(\nu,\theta_{\nu})$ with $\sup\limits_{s\in[0,\pi]}\theta_{\nu}(s)>\frac{\pi}{6}$. However, the refutation of the first part of the Krasovskii's conjecture was shown by Mcleod \cite{McLeod1997} for large $\nu>0$. In 1978, Krasovskii's results \cite{Krasovskii1962} were improved by Keady and Norbury \cite{Keady1978} using the modern global bifurcation theory. Toland \cite{Toland1978} then gave a proof that there was a convergence to a solution $\theta_{0}$ of the limiting problem $\nu=\infty$ and he concluded that the limit $\lim\limits_{s\rightarrow0+}\theta_{0}(s)=\frac{\pi}{6}$, provided that it existed. Until 1982, Amick, Fraenkel, and Toland \cite{Amick1982} and Plotnikov \cite{Plotnikov2002}, with proofs dependent on conformal mappings, proved the Stokes conjecture independently for isolated singularities satisfying some structural assumptions on the isolatedness of stagnation point, and the symmetry and monotonicity of free surface. Subsequently, the convexity of Stokes waves of extreme form and the second part of the Krasovskii's conjecture were confirmed by Plotnikov and Toland \cite{Plotnikov}. As an important breakthrough in the analysis of the Stokes conjecture, in 2011, Varvaruca and Weiss \cite{Varvaruca2011} applied a new geometric approach, mainly based on the blow-up limits, the monotonicity formula, and the frequency formula, to give a significant proof of Stokes conjecture without any structural assumptions.

Although Stokes seems to have made the conjecture only for irrotational waves, Varvaruca \cite{Varvaruca2009} first studied the existence of extreme waves with vorticity and their properties. Before Varvaruca \cite{Varvaruca2009}, the existence of steady, periodic, small-amplitude water waves with general vorticity had been considered in Dubreil-Jacotin \cite{Dubreil}. While Constantin and Strauss \cite{Constantin} also constructed rotational waves of large amplitude, they were unable to imply whether there was a limiting wave having a stagnation point. Varvaruca's paper \cite{Varvaruca2009} was the first pioneering breakthrough in the existence of extreme waves with vorticity, he proved that a symmetric monotone free boundary at stagnation points has either a corner of $\frac{2\pi}{3}$ or a horizontal tangent. Furthermore, he showed that if the vorticity function is non-negative close to the free surface, then the free surface necessarily has a corner of $\frac{2\pi}{3}$. Special mention should be made of the important contributions of the results of Varvaruca and Weiss \cite{Varvaruca2012a}, that is, an interesting new feature of the wave with vorticity at the stagnation point could probably exist. Significantly, the new feature is called cusp, which would not be possible without the presence of vorticity. Conversely, if either the vorticity vanishes on the free surface or the vorticity function is non-negative, and the free surface is an injective curve, horizontally flat singularities can be excluded.

This new geometric approach in \cite{Varvaruca2011} and \cite{Varvaruca2012a} can still be applied to analysis of the singularity of the free surface to the three-dimensional axially symmetric water wave problem. As a first result for three-dimensional case, Varvaruca and Weiss \cite{Varvaruca2012c} analyzed the possible profiles of the water wave and the free boundaries close to stagnation points, as well as points on the axis of symmetry of the axisymmetric, three-dimensional, inviscid, incompressible fluid acted on by gravity and with a free surface. To the best of our knowledge, Varvaruca and Weiss \cite{Varvaruca2012c} was the earliest article to rigorously analyze the singularity of the free boundary near the degenerate points on the symmetric axis. Most notably they found that one of the possible profiles at the original point is the Garabedian pointed bubble, which is a new feature in the axially symmetric problem and was first found by Garabedian \cite{Garabedian1985}. Besides, Garcia, Varvaruca and
Weiss \cite{Smit2016} first use this geometric method to the ElectroHydroDynamic equations (EHD). Also, they found a unique homogeneous solution of the EHD equations, which
seems to correspond to the well-known "Garabedian pointed bubble" in fluid flow
without electric field.

 On another hand, there are numerous papers concerning the regularity away from the degenerate points.
 As a pioneer work in this field, Alt, Caffarelli and Friedman proved in \cite{Alt1981} the regularity of the free boundary using a variational approach. In particular, they obtained the singularities can not occur in two dimensions.
In \cite{Alt1982}, the authors had shown the regularity away from the degenerate points in a three-dimensional axially symmetric cavity without gravity. Moreover, it was shown that the free boundary is $C^{1,\alpha}$ under the assumption that the vorticity function $f(\psi)$ is a constant in \cite{Friedman1983}. Recently, the mathematical results of \cite{Alt1981} were used
 in the study of compressible impinging jet flows \cite{CDW}, two phase fluids \cite{CDX} and fluid issuing from de Laval nozzle \cite{CD}.

The study in this paper aims to generalize the results to the axisymmetric flow with vorticity, with the novel use of a nonhomogeneous semilinear elliptic equation. A very useful tool in the present work is the monotonicity formula, which is an extension of the monotonicity formulas in Theorem 3.1 of  \cite{Weiss1998} and in Theorem 1.2 of \cite{Weiss19992}.  Besides, the Weiss's monotonicity formula (see also Theorem 2 in \cite{Weiss1999})  plays an important role in studying free boundary problems of a variational nature with a homogeneous structure. The major objectives of the present paper are to analyze all the possible blow-up limits of the free boundary close to different types of the degenerate points.
Due to the appearance of general vorticity, all of the possible profiles could exist without any further restriction.

\subsection{Definitions and notations}
In this subsection, we will give some definitions on the solutions of the
free boundary problem \eqref{equation}, and some notations in this paper.

First, we introduce the definition of the weighted Sobolev space $W_{w,loc}^{1,2}(\Omega)$, which is useful for us to  define the solutions of the problem \eqref{equation} in some suitable senses. For simplicity of notation, we take $X=(x,y)\in\mathbb{R}^{2}$ and $dX=dxdy$ here and afterwards.

\begin{definition}Let $E\subset\mathbb{R}^{2}_{+}$ be an open set.
The weighted space $L^{2}_{w}(E)$  and the local space $L^{2}_{w,loc}(E)$ are defined as
	\begin{equation*}
	L^{2}_{w}(E):=\left\{g:E\rightarrow \mathbb{R};g \text{ is measurable and }\int_{E}\frac{1}{x}|g|^{2}\,dX<+\infty\right \}
	\end{equation*}
	and \begin{equation*}
	L^{2}_{w,loc}(E):=\{g\in L^{2}_{w}(K)  \text{ for any compact set }K \text{ of }E\}.
	\end{equation*}
	The norm of the space $L^{2}_{w}(E)$  is defined by
	\begin{equation*}
	||g||_{L^{2}_{w}(E)}=\bigg(\int_{E}\frac{1}{x}|g|^{2}\,dX\bigg)^{1/2}.
	\end{equation*}

 Similarly, the weighted Sobolev space $W^{1,2}_{w}(E)$ and the local weighted Sobolev space $W^{1,2}_{w,loc}(E)$ are defined to be
	\begin{equation*}
	W^{1,2}_{w}(E):=\left\{g\in L^{2}_{w}(E); \frac{\partial g}{\partial x}\in L^{2}_{w}(E)\text{ and }\frac{\partial g}{\partial y}\in L^{2}_{w}(E)\right\},
	\end{equation*}
	and
	\begin{equation*}
	W^{1,2}_{w,loc}(E):=\left\{g\in L^{2}_{w,loc}(E); \frac{\partial g}{\partial x}\in L^{2}_{w,loc}(E)\text{ and }\frac{\partial g}{\partial y}\in L^{2}_{w,loc}(E)\right\},
	\end{equation*}	
	where $\frac{\partial }{\partial x}$ and  $\frac{\partial }{\partial y}$ are the first-order weak partial derivatives.
\end{definition}

Next, we would like to give the definition of a variational solution to the problem \eqref{equation}.
Note that, the vorticity function $f$ is assumed to be a continuous function throughout this paper.

\begin{definition}
	A function $\psi\in W_{w,loc}^{1,2}(\Omega)$ is called a variational solution of the problem \eqref{equation} provided

	(\romannumeral1) $\psi\in C^{0}(\Omega)\cap C^{2}(\Omega\cap\{\psi>0\}),\psi\geq 0$ in $\Omega$ and $\psi=0$ on $\{x=0\}\cap\Omega$,
	
	(\romannumeral2) $\lim\limits _{X \rightarrow X_{0}, \atop X \in \Omega \cap\{\psi>0\}}\frac{1}{x} \frac{\partial \psi}{\partial y}=0\quad$  and $\lim\limits _{X \rightarrow X_{0}, \atop X \in \Omega \cap\{\psi>0\}}\frac{1}{x} \frac{\partial \psi}{\partial x}$ exist \\for any $X_{0}\in\Omega\cap\{x=0\}$,
	
	(\romannumeral3) the first variation with respect to domain variations of the functional
	\begin{equation*}
	J(\tilde{\psi})=\int_{\Omega}\left(\frac{1}{x}|\nabla \tilde{\psi}|^{2}-2xF(\tilde{\psi})-xyI_{\{\tilde{\psi}>0\}}\right)\,dX
	\end{equation*}
	vanishes at $\tilde{\psi}=\psi$, where $F(t):=\int_{0}^{t}f(s)ds$, and $I_{E}$ is the characteristic function of a set $E$.
	Equivalently,
	\begin{align}\label{equation:variationalSolution}
	0&=-\frac{d}{d\varepsilon}J\left(\psi\left(X+\varepsilon\eta(X)\right)\right)\rvert_{\varepsilon=0}\notag\\
	&=\int_{\Omega}\bigg(\frac{1}{x}|\nabla\psi|^{2}\nabla\cdot\eta-\frac{1}{x^{2}}|\nabla \psi|^{2}\eta_{1}-\frac{2}{x}\nabla\psi D\eta\cdot\nabla\psi\notag\\
	&~~~~- \operatorname{div}(xy\eta)I_{\{\psi>0\}}-2F(\psi)\operatorname{div}(x\eta)\bigg)\,dX
	\end{align}
	for each $\eta(X)=(\eta_{1}(X),\eta_{2}(X))\in C^{1}_{0}(\Omega;\mathbb{R}^{2})$ satisfying $\eta_{1}=0$ on $\{x=0\}\cap\Omega$,
	
	(\romannumeral4) the free boundary $\partial\{\psi>0\}$ is contained in the quarter plane $\{(x,y)\mid x\geq0, y\leq 0\}$.
\end{definition}

\begin{rmk}
 It follows that the variational solution $\psi$ of the problem \eqref{equation} satisfies $\operatorname{div}\left(\frac{1}{x}\nabla \psi\right)=-xf(\psi)$ in $\Omega\cap\{\psi>0\}$ in  sense  of distributions and $\frac{1}{x^{2}}|\nabla\psi|^{2}=-y$ on $C^{2,\alpha}$-smooth parts of the free boundary $\partial\{\psi>0\}\cap\{xy\neq0\}$ according to an integration by parts.
\end{rmk}

Also, we give the definition of a weak solution of the problem \eqref{equation}.

\begin{definition}
	A function $\psi\in W^{1,2}_{w,loc}(\Omega)$ is called a weak solution of the problem \eqref{equation} provided
	
	(\romannumeral1) $\psi$ is a variational solution of the problem \eqref{equation},\\and
	
	(\romannumeral2)
	the free boundary $\partial\{\psi>0\}\cap\Omega\cap\{x>0\}\cap\{y\neq0\}$ is locally a $C^{2,\alpha}$-smooth curve.
\end{definition}

Since our results are completely local, we only consider a neighborhood of the degenerate point $X_{0} \in \Omega \cap \partial\{\psi>0\}$, and therefore we take the notations as follows
\begin{equation*}
	B_{r}(X_{0}):=\{X=(x,y)\in\mathbb{R}^{2}\mid|X-X_{0}|<r\},
\end{equation*}
and
\begin{equation*}
B_{r}^{+}(X_{0}):=\{X=(x,y)\in\mathbb{R}^{2}\mid|X-X_{0}|<r\text{ and }x>0\}.
\end{equation*}
For simplicity, we take $O:=(0,0)$, $B_r:=B_r(O)$ and $B_{r}^{+}:=B_{r}^{+}(O)$ throughout this paper.

Moreover, we denote the sets of the degenerate point Type 1 and Type 2 by
\begin{equation*}
	S^{s}_{\psi}:=\{X_{0}=(x_{0},0)\in\Omega\cap\partial\{\psi>0\}; x_{0}> 0\}
\end{equation*}  and
\begin{equation*}
	S^{a}_{\psi}:=\{X_{0}=(0,y_{0})\in\Omega\cap\partial\{\psi>0\}; y_{0}< 0\}
\end{equation*}  respectively.

Finally, we define the blow-up sequence for three classes degenerate points. It should be noted that the gradient condition $\lvert\nabla\psi\rvert=\sqrt{-x^{2}y}$ on the free boundary implies the behavior of the solution near the degenerate point. More precisely, for Type 1 degenerate point $X_{0}=(x_{0},0)\, (x_{0}>0)$, $\psi$ goes like $r^{3/2}$ near $X_{0}$; For Type 2 degenerate point $X_{0}=(0,y_{0})\, (y_{0}<0)$, $\psi$ goes like $r^{2}$ near $X_{0}$; For Type 3 degenerate point $X_{0}=(0,0)$, $\psi$ goes like $r^{5/2}$ near $X_{0}$. These facts suggest us to define the following three types of the blow-up subsequence.

\begin{definition}
Let $r_{m}>0$ converging to $0$ as $m\rightarrow+\infty$, the blow-up sequence $\{\psi_{m}(X)\}$ is defined by
\begin{flalign}
    && \text{ (Type 1.)\ \ }
 \psi_{m}(X)&:=
\frac{\psi(X_{0}+r_{m}X)}{r_{m}^{3/2}} \text{ for any }X_{0}\in S_{\psi}^{s};  & \label{equation:blowUp1} \\
    && \text{ (Type 2.)\ \ }
\psi_{m}(X)&:=\frac{\psi(X_{0}+r_{m}X)}{r_{m}^{2}} \text{ for any }X_{0}\in S_{\psi}^{a}; & \label{equation:blowUp2} \\
    && \text{ (Type 3.)\ \ }
\psi_{m}(X)&:=\frac{\psi(X_{0}+r_{m}X)}{r_{m}^{5/2}}\text{ for }X_{0}=O, & \label{equation:blowUp3}
\end{flalign}
which is well-defined for $|X|<\frac{1}{r_m}dist(X_0, \partial\Omega)$, respectively.
Notice that for every $r>0$, if the functions $\psi_{m}(X)$ are uniformly bounded in $W^{1,2}_{loc}(B_r)$ $(W^{1,2}_{w,loc}(B_r^+))$ for $X_0\in S_{\psi}^{d}$ $(X_0\in S_{\psi}^{a}\cup\{O\})$, thus there exists a (not-relabeled) subsequence $\{\psi_{m}(X)\}$ and a function $\psi_{0}(X)\in W^{1,2}_{loc}(\mathbb{R}^2)$ $(W^{1,2}_{loc}(\mathbb{R}_{+}^2))$, such that $\psi_{m}(X)$ converges to $\psi_{0}(X)$ weakly in $W_{loc}^{1,2}(\mathbb{R}^{2})$ $(W^{1,2}_{loc}(\mathbb{R}_{+}^2))$ as $m\rightarrow+\infty$. Such a function $\psi_{0}$ is called a \emph{blow-up limit} of $\psi$ at $X_{0}$, which give the infinitesimal behavior of $\psi$ near $X_{0}$. Since different subsequences may converge to different blow-up limits,  one of the main goals in this paper is to compute all possible blow-up limits for Type 1, Type 2 and Type 3.
\end{definition}

\subsection{Main results}

Before we state the main results, we would like to give two important assumptions.
\begin{assumption}\label{assumption1}
	For any $X_{0}=(x_{0},y_{0})\in \Omega\cap\partial\{\psi>0\}$, suppose
	\begin{equation}\label{equation:a1}
	\frac{|\nabla\psi(x,y)|^{2}}{x^{2}}\leq C(|y|+|x-x_{0}|)\quad\text{  in }B^{+}_{r_{0}}(X_{0}),
	\end{equation}
	where $r_{0}>0$ is sufficiently small, and $C$ is a positive constant.
	
	In addition, \eqref{equation:a1} implies that
	\begin{equation*}
		\psi^{2}(x,y)\leq Cx^{2}(|y|+|x-x_{0}|)\lvert X-X_{0}\rvert^{2}\quad\text{  in }B^{+}_{r_{0}}(X_{0}).
	\end{equation*}
\end{assumption}

\begin{assumption}\label{assumption:2}
	Supposing that  in a neighborhood of $X_{0}\in \Omega \cap \partial\{\psi>0\}$, there exists $r_{0}>0$ such that $\partial\{\psi>0\}\cap B_{r_{0}}^{+}(X_{0})$ is a continuous injective curve, which can be written by $g(t)=(g_{1}(t),g_{2}(t)):I\rightarrow\mathbb{R}^{2}$ such that $g(0)=X_{0}$, where $I$ is an interval of $\mathbb{R}$ containing the origin.
\end{assumption}

In order to compute the blow-up limits, for any $X_{0}\in \Omega \cap \partial\{\psi>0\}$, we need to define the following functionals.
\begin{align*}
\mathscr{D}_{1,X_{0},\psi}(r)&=\int_{B_{r}^{+}(X_{0})}\left(\frac{1}{x}|\nabla \psi|^{2}-xyI_{\{\psi>0\}}-x\psi f(\psi)\right)\,dX,\\
\mathscr{D}_{2,X_{0},\psi}(r)&=\int_{\partial B_{r}^{+}(X_{0})}\frac{1}{x}\psi^{2}\,d\mathcal{H}^{1},\\
\mathscr{D}^{y}_{X_{0},\psi}(r)&=r^{-3}\mathscr{D}_{1,X_{0},\psi}(r)-\frac{3}{2}r^{-4}\mathscr{D}_{2,X_{0},\psi}(r),\\
\mathscr{D}^{x}_{X_{0},\psi}(r)&=r^{-3}\mathscr{D}_{1,X_{0},\psi}(r)-2r^{-4}\mathscr{D}_{2,X_{0},\psi}(r),\\
\text{and}\ \ \ \ \ \ \ \ \ \ \ \
\mathscr{D}^{xy}_{X_{0},\psi}(r)&=r^{-4}\mathscr{D}_{1,X_{0},\psi}(r)-\frac{5}{2}r^{-5}\mathscr{D}_{2,X_{0},\psi}(r),
\end{align*}
where $\mathcal{H}^{1}$ is 1-dimensional Hausdorff measure.
\begin{rmk}
In the following discussion, we will find that
the limit $\mathscr{D}^{y}_{X_{0},\psi}(r)$, $\mathscr{D}^{x}_{X_{0},\psi}(r)$ and $\mathscr{D}^{xy}_{X_{0},\psi}(r)$ for Type 1, Type 2 and Type 3 exist separately as $r\rightarrow 0+$.
And we denote the limits by $\mathscr{D}^{y}_{X_{0},\psi}(0+)$,
 $\mathscr{D}^{x}_{X_{0},\psi}(0+)$ and $\mathscr{D}^{xy}_{X_{0},\psi}(0+)$, respectively. Moreover, notice that
$$ \mathrm{ (Type~~1.)\ \ for \ \ }\psi_{m}(X):=
\frac{\psi(X_{0}+r_{m}X)}{r_{m}^{3/2}}\mathrm{\ \ and \ \ }X_{0}\in S_{\psi}^{d},$$
$$\mathscr{D}^{y}_{X_{0},\psi}(0+)=-x_0\lim_{m\rightarrow +\infty}\int_{B_1}yI_{\{\psi_m>0\}}\,dX=-x_0\lim_{m\rightarrow +\infty}\frac{w_2\int_{B_1}yI_{\{\psi_{m}>0\}}\,dX}{|B_1|}$$
$$  \mathrm{ (Type~~2.)\ \ for \ \ }\psi_{m}(X):=
\frac{\psi(X_{0}+r_{m}X)}{r_{m}^{2}}\mathrm{\ \ and \ \ }X_{0}\in S_{\psi}^{a},$$
$$\mathscr{D}^{x}_{X_{0},\psi}(0+)=-y_0\lim_{m\rightarrow +\infty}\int_{B_1^{+}}xI_{\{\psi_m>0\}}\,dX=-y_0\lim_{m\rightarrow +\infty}\frac{w_2\int_{B_1^+}xI_{\{\psi_m>0\}}\,dX}{2|B_1^+|}$$
$$  \mathrm{ (Type~~3.)\ \ for \ \ } 
\psi_{m}(X):=
\frac{\psi(X_{0}+r_{m}X)}{r_{m}^{5/2}}\mathrm{\ \ and \ \ }X_{0}=O,$$
$$\mathscr{D}^{xy}_{X_{0},\psi}(0+)=-\lim_{m\rightarrow +\infty}\int_{B_1^+}xyI_{\{\psi_m>0\}}\,dX=-\lim_{m\rightarrow +\infty}\frac{w_2\int_{B_1^+}xyI_{\{\psi_m>0\}}\,dX}{2|B_1^+|},$$
where $w_2$ is the volume of the unit ball in two dimensions, which are equivalent to the weighted density of the set $\{\psi>0\}$ at $X_{0}$ in the case Type 1, Type 2 and Type 3 respectively in some suitable sense.
\end{rmk}

The following are our main results. Note that we will give all the possible profiles of the free boundaries near three types of the degenerate points, please see Table 1.
\begin{table}[H]
\caption{Blow-up limits}
\centering
\resizebox{.95\columnwidth}{!}{
\renewcommand{\arraystretch}{1.5}
\begin{tabular}{|m{0.7cm}<{\centering}|m{7cm}<{\centering}|m{5.7cm}<{\centering}|m{6cm}<{\centering}|}
\hline
Type & Blow-up limits & Weighted density & Wave profile\\
\hline
\multirow{5}{*}{1} & $\psi_0(x,y)=\psi_{0}(r \sin\theta, r \cos\theta)=$ & $-x_0\int_{B_1}yI_{\{(r sin\theta,r cos\theta)|\frac{2}{3}\pi<\theta<\frac{4}{3}\pi\}}dX$ & Stoke corner,
see Fig. 1
\\
& $\frac{\sqrt{2}}{3}x_0 r^{\frac{3}{2}}cos\left(\frac{3}{2}\theta-\frac{\pi}{2}\right)I_{\{(r sin\theta, r cos\theta)|\frac{2}{3} \pi<\theta<\frac{4}{3}\pi\}}$& &
\\
\cline{2-4}
& \multirow{3}{*}{$\psi_0(x,y)\equiv0$}& $-x_0\int_{B_1}y^{+}dX$ &
Horizontally flatness, see Fig. 2 (a)
\\
\cline{3-4}
& & $-x_0\int_{B_1}y^{-}dX$ &
Horizontally flatness, see Fig. 2 (b)
\\
\cline{3-4}
& & $0$ &
Horizontally flatness or cusp, see Fig. 3-4
\\
\hline
\multirow{3}{*}{2} & $\psi_{0}(x, y)=Cx^2$ & $-y_0\int_{B_1^{+}}xdX$ &
One of three vertical cusps, see Fig. 5-6
\\
\cline{2-4}
& \multirow{2}{*}{$\psi_0(x,y)\equiv0$}& $-y_0\int_{B_1^{+}}xdX$ &
One of three vertical cusps, see Fig. 5-6
\\
\cline{3-4}
& & $0$ &
Cusp, see Fig. 7
\\
\hline
\multirow{5}{*}{3} & $\psi_0(x,y)=\psi_{0}(r sin\theta, r cos\theta)=$ & $-\int_{B^{+}_{1}}xyI_{\{(rsin\theta,r cos\theta)|0<\theta<\theta^*\}}dX$ &
Garabebian pointed bubble, see Fig. 8
\\
& $C_0 r^{\frac{5}{2}}sin^2\theta P'_{3/2}(cos\theta)I_{\{(r sin\theta, r cos\theta)|0<\theta<\theta^*\}}$
& &
\\
\cline{2-4}
& \multirow{3}{*}{$\psi_0(x,y)\equiv0$}& $-\int_{B^{+}_{1}}xy^{+}dX$ &
Horizontally flatness, see Fig. 9 (a)
\\
\cline{3-4}
& & $-\int_{B^{+}_{1}}xy^{-}dX$ &
Horizontally flatness, see Fig. 9 (b)
\\
\cline{3-4}
& & $0$ &
Horizontally cusp, see Fig. 10
\\
\hline
\end{tabular}
}
\end{table}

The first result establishes the possible profiles of the free boundaries close to Type 1 degenerate point.

\begin{theorem}[Type 1 degenerate point]\label{theorem:1}
	Let $\psi$ be a weak solution of the problem \eqref{equation} satisfying Assumption \ref{assumption1} and Assumption \ref{assumption:2}. Then for any $X_{0}\in S^{s}_{\psi}$,
	\begin{equation*}
		\begin{aligned}
			\mathscr{D}^{y}_{X_{0},\psi}(0+)\in\Bigg\{&-x_{0}\int_{B_{1}\cap\left\{(r\sin\theta,r\cos\theta);\frac{2\pi}{3}<\theta<\frac{4\pi}{3}\right\}}y\,dX,\\&-x_{0}\int_{B_{1}}y^{+}\,dX,-x_{0}\int_{B_{1}}y^{-}\,dX,0\Bigg\}.
		\end{aligned}
	\end{equation*}
Here and afterwards, $y^{+}=\max(0,y)$ and $y^{-}=\min(0,y)$. Furthermore, due to the classification of weighted density, there are exactly three cases that could happen.
	
	\textbf{Case 1. Stokes corner.} If $\mathscr{D}^{y}_{X_{0},\psi}(0+)=-x_{0}\int_{B_{1}\cap\{(r\sin\theta,r\cos\theta);\frac{2\pi}{3}<\theta<\frac{4\pi}{3}\}}y\,dX$,  then either \begin{equation*}
	\lim\limits_{t\rightarrow0+}\frac{g_{2}(t)}{g_{1}(t)-x_{0}}=\frac{1}{\sqrt{3}} \quad\text{ and }\quad \lim\limits_{t\rightarrow0-}\frac{g_{2}(t)}{g_{1}(t)-x_{0}}=-\frac{1}{\sqrt{3}}
	\end{equation*}
	or\begin{equation*}
	\lim\limits_{t\rightarrow0+}\frac{g_{2}(t)}{g_{1}(t)-x_{0}}=-\frac{1}{\sqrt{3}} \quad\text{ and }\quad \lim\limits_{t\rightarrow0-}\frac{g_{2}(t)}{g_{1}(t)-x_{0}}=\frac{1}{\sqrt{3}}
	\end{equation*}
	and $g_{1}(t)\neq x_{0}$ in $t\in(-t_{1},t_{1})\backslash\left\{0\right\}$, where $t_{1}$ is a positive constant, see Fig. 1.
\begin{figure}[H]
		\centering
		\includegraphics[width=6.4cm, height=4.8cm]{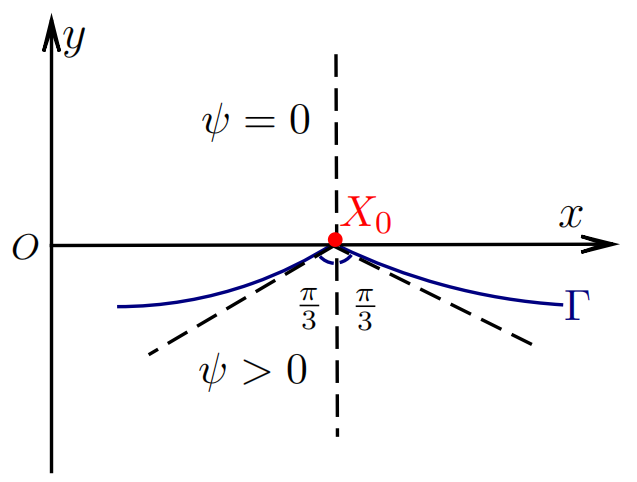}
		\caption*{Fig. 1. Stokes corner}
\end{figure}
\textbf{Case 2. Horizontally flatness.} If $\mathscr{D}^{y}_{X_{0},\psi}(0+)\in\left\{-x_{0}\int_{B_{1}}y^{+}\,dX,-x_{0}\int_{B_{1}}y^{-}\,dX\right\}$,  then \begin{equation*}
	\lim\limits_{t\rightarrow0}\frac{g_{2}(t)}{g_{1}(t)-x_{0}}=0,
	\end{equation*}
	$g_{1}(t)\neq x_{0}$ in $t\in(-t_{1},t_{1})\backslash\left\{0\right\}$, and
 $g_{1}(t)-x_{0}$ changes sign at $t=0$, see Fig. 2.
\begin{figure}[H]
	\begin{minipage}[t]{0.48\linewidth}
		\centering
		\includegraphics[width=6.4cm, height=4.8cm]{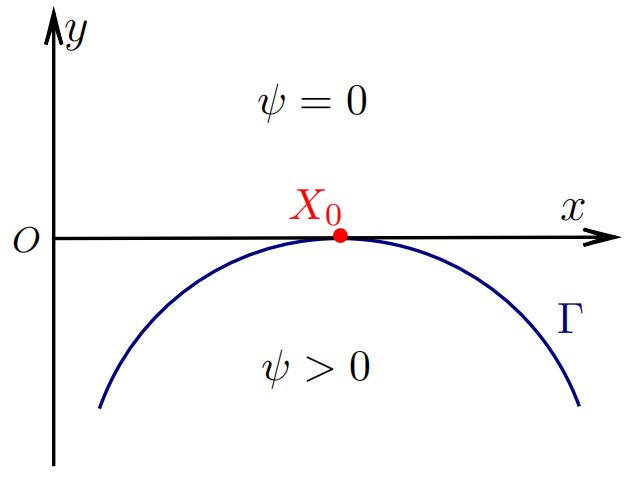}
\caption*{(a)}
	\end{minipage}%
	\begin{minipage}[t]{0.48\linewidth}
		\centering
		\includegraphics[width=6.4cm, height=4.8cm]{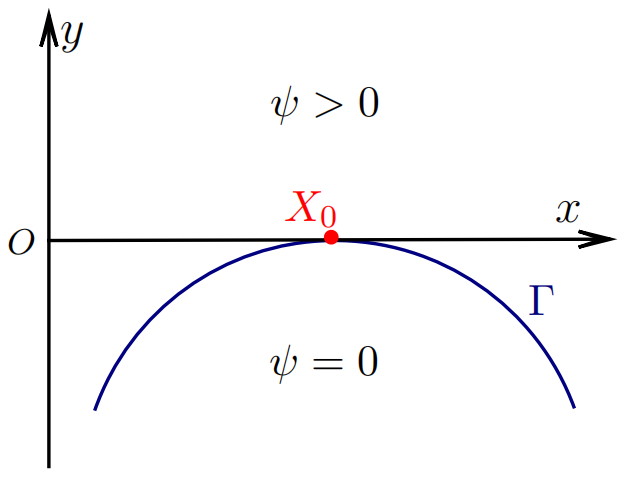}
\caption*{(b)}
	\end{minipage}%
		\caption*{Fig. 2. Horizontally flatness}
\end{figure}
\textbf{Case 3.} If $\mathscr{D}^{y}_{X_{0},\psi}(0+)=0$, then
	\begin{equation*}
\lim\limits_{t\rightarrow0}\frac{g_{2}(t)}{g_{1}(t)-x_{0}}=0,
	\end{equation*}
	$g_{1}(t)\neq x_{0}$ in $t\in(-t_{1},t_{1})\backslash\{0\}$, and there are two different cases.

\textbf{Subcase 3.1. Horizontally flatness.}
$g_{1}(t)-x_{0}$ changes sign at $t=0$, see Fig. 3.
\begin{figure}[H]
		\includegraphics[width=6.4cm, height=4.8cm]{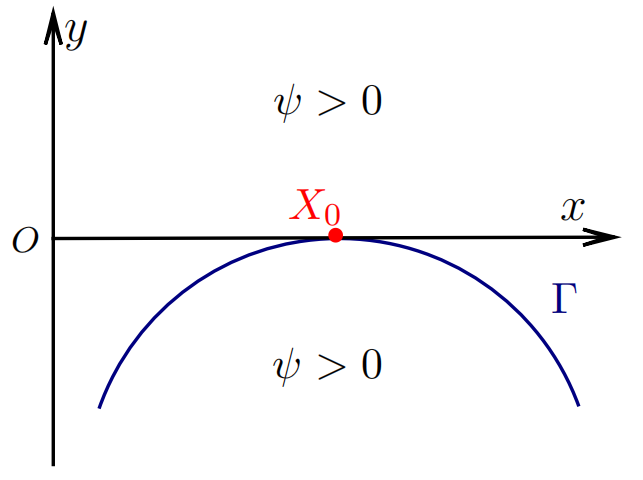}
\caption*{Fig. 3. Horizontally flatness}
\end{figure}

\textbf{Subcase 3.2. Horizontally cusp.}
$g_{1}(t)-x_{0}$ does not change sign at $t=0$, see Fig. 4.
\begin{figure}[H]
	\begin{minipage}[t]{0.48\linewidth}
		\centering
		\includegraphics[width=6.4cm, height=4.8cm]{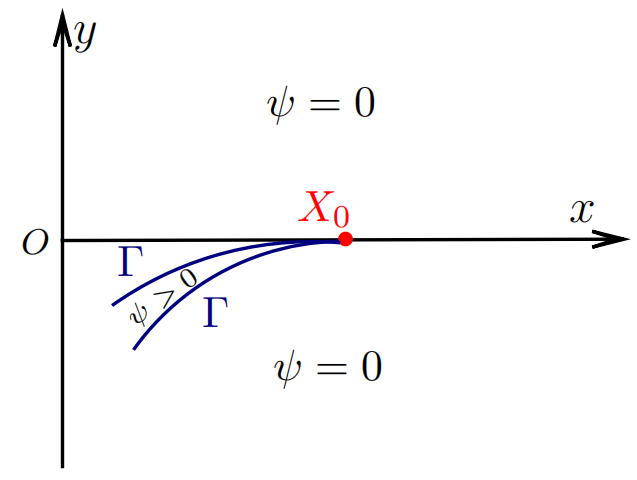}
	\end{minipage}%
	\begin{minipage}[t]{0.48\linewidth}
		\centering
		\includegraphics[width=6.4cm, height=4.8cm]{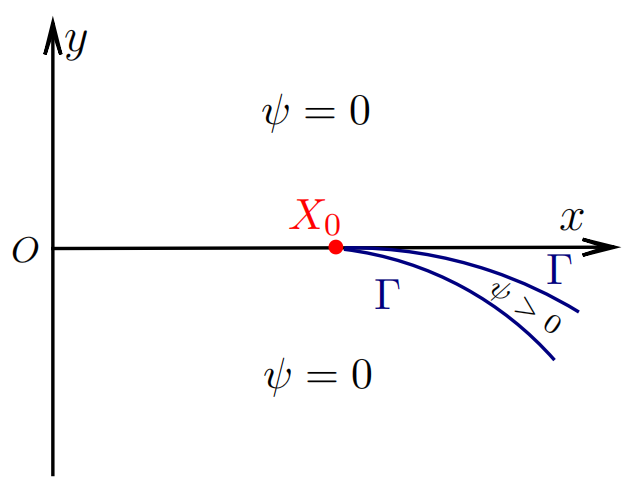}
	\end{minipage}%
		\caption*{Fig. 4. Horizontally cusp}
\end{figure}
Furthermore, the blow-up limits
$$\psi_{0}(x,y)=\psi_{0}(r sin\theta, r cos\theta)=\frac{\sqrt{2}}{3}x_0 r^{\frac{3}{2}}cos\left(\frac{3}{2}\theta-\frac{\pi}{2}\right)I_{\{(r \sin\theta, r \cos\theta):\frac{2}{3} \pi<\theta<\frac{4}{3}\pi\}}\text{ for Case 1 }$$
and
$$\psi_{0}(x,y)\equiv0\text{ for Case 2 and Case 3}.$$
\end{theorem}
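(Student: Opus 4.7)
The plan is to adapt the Varvaruca--Weiss geometric blow-up strategy to the axisymmetric case, exploiting that near a Type~1 point $X_{0}=(x_{0},0)$ with $x_{0}>0$ the weight $1/x$ is smooth and bounded, so the analysis reduces to a perturbation of the two-dimensional irrotational theory. My first step is to establish a Weiss-type monotonicity formula for $\mathscr{D}^{y}_{X_{0},\psi}$. Testing the variational identity \eqref{equation:variationalSolution} against radial cutoffs of $(X-X_{0})$ and against scalar multiples of $\psi$ calibrated to the homogeneity $3/2$, I expect to derive an almost-monotonicity inequality of the form
\begin{equation*}
\frac{d}{dr}\mathscr{D}^{y}_{X_{0},\psi}(r)\ge \frac{2}{r^{5}}\int_{\partial B_{r}^{+}(X_{0})}\frac{1}{x}\Bigl(\nabla\psi\cdot(X-X_{0})-\tfrac{3}{2}\psi\Bigr)^{2}\,d\mathcal{H}^{1}-E(r),
\end{equation*}
where $E(r)$ is integrable on $(0,r_{0})$ and collects errors from the smooth variation of $1/x$ around $x_{0}$, from the vorticity contribution $xf(\psi)\psi$ bounded via Assumption~\ref{assumption1}, and from boundary pieces. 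This yields existence of $\mathscr{D}^{y}_{X_{0},\psi}(0+)$ and, through the squared-gradient term, forces any blow-up limit to be $3/2$-homogeneous about the origin.

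Next I would carry out the blow-up $\psi_{m}(X)=\psi(X_{0}+r_{m}X)/r_{m}^{3/2}$. Assumption~\ref{assumption1} provides a uniform $W^{1,2}_{\mathrm{loc}}(\mathbb{R}^{2})$ bound on $\{\psi_{m}\}$, so along a subsequence $\psi_{m}\rightharpoonup\psi_{0}$. Since $x=x_{0}+r_{m}X_{1}\to x_{0}$ and $r_{m}^{3/2}F(r_{m}^{3/2}\psi_{m})\to 0$ uniformly on compacts, the rescaled Euler--Lagrange equation passes to the limit as $\Delta\psi_{0}=0$ in $\{\psi_{0}>0\}$ with the Bernoulli condition $|\nabla\psi_{0}|^{2}=-x_{0}^{2}y$ on the limiting free surface, and $\psi_{0}$ is a variational solution of this planar constant-weight problem. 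Combined with the monotonicity formula, this forces $\psi_{0}$ to be $3/2$-homogeneous about the origin.

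I then classify such $\psi_{0}$ by a polar-coordinate calculation. On each component of $\{\phi>0\}$, writing $\psi_{0}(r,\theta)=r^{3/2}\phi(\theta)$, harmonicity gives $\phi''+\tfrac{9}{4}\phi=0$ and hence $\phi(\theta)=a\cos(\tfrac{3}{2}\theta+\beta)$; the Bernoulli condition $|\nabla\psi_{0}|^{2}=-x_{0}^{2}r\cos\theta$ then fixes the amplitude and the angular location of each arc by matching at its endpoints, and the sign requirement $-\cos\theta\ge 0$ confines every admissible arc to the lower half-plane. Case analysis shows that the only candidates are $\psi_{0}\equiv 0$ and the symmetric Stokes profile
\begin{equation*}
\psi_{0}(r\sin\theta,r\cos\theta)=\tfrac{\sqrt{2}}{3}x_{0}r^{3/2}\cos\bigl(\tfrac{3}{2}\theta-\tfrac{\pi}{2}\bigr)I_{\{2\pi/3<\theta<4\pi/3\}}.
\end{equation*}
Evaluating the weighted-density formulas stated in the remark just before the theorem on these candidates, together with the possibility that $\{\psi_{m}>0\}$ fills exactly one of the half-disks $\{y>0\}$ or $\{y<0\}$ of $B_{1}$ in the limit, yields precisely the four values of $\mathscr{D}^{y}_{X_{0},\psi}(0+)$.

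Finally I would translate the blow-up dichotomy into the geometric description of the curve $g$ via Assumption~\ref{assumption:2} together with Hausdorff convergence of $\partial\{\psi_{m}>0\}\cap B_{1}$ to $\partial\{\psi_{0}>0\}\cap B_{1}$. In the Stokes case the limiting free boundary is the union of the rays $\theta=2\pi/3$ and $\theta=4\pi/3$, so continuity and injectivity of $g$ force it to approach these rays from each side of $t=0$, giving the two slope alternatives $\pm 1/\sqrt{3}$ and nonvanishing of $g_{1}-x_{0}$ near $0$. In the half-disk density cases $\{\psi_{m}>0\}$ concentrates on one half-disk, so $g_{2}/(g_{1}-x_{0})\to 0$ with $g_{1}-x_{0}$ forced to change sign; in the zero-density case the slope still vanishes but $g_{1}-x_{0}$ may keep its sign, producing the flatness/cusp dichotomy. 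The main obstacle will be the classification step: under the inhomogeneous Bernoulli condition one must rigorously exclude multi-sector homogeneous solutions whose individual arcs do not consistently respect the sign of $-y$, and the density formulas pin down $\psi_{0}$ only up to the measure of its support, so a careful pointwise matching along each free-boundary arc is needed to fix the profile.
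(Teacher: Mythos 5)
Your analytic core (the Weiss-type monotonicity formula for $\mathscr{D}^{y}_{X_{0},\psi}$ with integrable errors, the $3/2$-homogeneity of blow-up limits, the reduction to $\phi''+\tfrac{9}{4}\phi=0$ on each component, the Bernoulli matching that forces a single $120^{\circ}$ sector in the lower half-plane, and the evaluation of the four density values) is essentially the paper's own argument (Lemma \ref{lemma:stagnationMonotonicity}, Lemma \ref{lemma:stagnations}, Proposition \ref{pro:2.3}). The ``obstacle'' you flag at the end is handled in the paper exactly as you suggest: since $\partial\{\psi>0\}\subset\{y\le 0\}$, at most one $120^{\circ}$ cone can occur, and the endpoint matching of $\tfrac{9}{4}C_{1}^{2}=-x_{0}^{2}\cos\theta$ on both rays pins down $\tilde\theta_{1}=\tfrac{2\pi}{3}$, $\tilde\theta_{2}=\tfrac{4\pi}{3}$ and $C_{1}=\tfrac{\sqrt{2}}{3}x_{0}$; multi-sector configurations never arise.

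The genuine gap is in your last step, where you invoke ``Hausdorff convergence of $\partial\{\psi_{m}>0\}\cap B_{1}$ to $\partial\{\psi_{0}>0\}\cap B_{1}$'' to read off the tangent directions of $g$. This convergence is not established by weak or strong $W^{1,2}$ convergence of $\psi_{m}$, and it is actually false in the degenerate cases: when $\psi_{0}\equiv 0$ the limit set $\partial\{\psi_{0}>0\}$ is empty, while every rescaled free boundary contains the origin, so no Hausdorff limit statement can deliver the flatness/cusp conclusions. The paper's substitute is the accumulation-angle argument: define $A_{\pm}$ as the set of subsequential limits of $-\arg(g(t_{m})-g(0))+\tfrac{\pi}{2}$, and show $A_{\pm}\subset\{\tfrac{\pi}{2},\tfrac{2\pi}{3},\tfrac{4\pi}{3},\tfrac{3\pi}{2}\}$ by testing the measure $\operatorname{div}\left(\tfrac{1}{x}\nabla\psi_{m}\right)$ on a small ball $B_{\rho}$ centered at a putative forbidden direction: the free-boundary piece contributes at least $\int_{B_{\rho}\cap\partial\{\psi_{m}>0\}}\sqrt{-y}\,d\mathcal{H}^{1}\ge c(\theta_{0},\rho)>0$ because the curve length in $B_{\rho}$ is at least $2\rho-o(1)$ and $\sqrt{-y}$ is bounded below away from $\{y=0\}$, contradicting $\operatorname{div}\left(\tfrac{1}{x}\nabla\psi_{m}\right)(B_{\rho})\to\Delta\psi_{0}(B_{\rho})=0$. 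This lower bound degenerates exactly along $y=0$, which is why the horizontal directions $\tfrac{\pi}{2},\tfrac{3\pi}{2}$ survive and produce the flatness and cusp alternatives, and why $\Delta\psi_{0}(\hat{B}_{i})>0$ near the corner rays selects $\{a_{+},a_{-}\}=\{\tfrac{2\pi}{3},\tfrac{4\pi}{3}\}$ in the Stokes case. Connectedness of $A_{\pm}$ then makes each a singleton, and the dichotomy $a_{+}\ne a_{-}$ versus $a_{+}=a_{-}$ yields the sign-change versus cusp distinction. You need this (or an equivalent quantitative non-degeneracy of the free boundary measure) in place of Hausdorff convergence to close the proof.
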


The second result deals with the possible profiles of the free boundaries close to Type 2 degenerate point.

\begin{theorem}[Type 2 degenerate point]\label{theorem:2}
	 Let  $\psi$ be a weak solution of the problem \eqref{equation} satisfying Assumption \ref{assumption1} and Assumption \ref{assumption:2}. Then for any $X_{0}\in S^{a}_{\psi}$,
	 $$\mathscr{D}^{x}_{X_{0},\psi}(0+)\in \Bigg\{-y_{0}\int_{B_{1}^{+}}x\,dX,0\Bigg\}.$$
	  Moreover, there are exactly two cases that could happen due to the classification of density.

	\textbf{Case 1. Vertical cusp.} If $\mathscr{D}^{x}_{X_{0},\psi}(0+)=-y_{0}\int_{B_{1}^{+}}x\,dX$, then
either $g_{2}(t)\neq y_{0}$ in $t\in(0,t_{1})$ and \begin{equation*}
	\lim\limits_{t\rightarrow0+}\frac{g_{1}(t)}{g_{2}(t)-y_{0}}=0 \text{ (see Fig. 5)},
	\end{equation*}
\begin{figure}[H]
	\begin{minipage}[t]{0.48\linewidth}
		\centering
		\includegraphics[width=3cm, height=5cm]{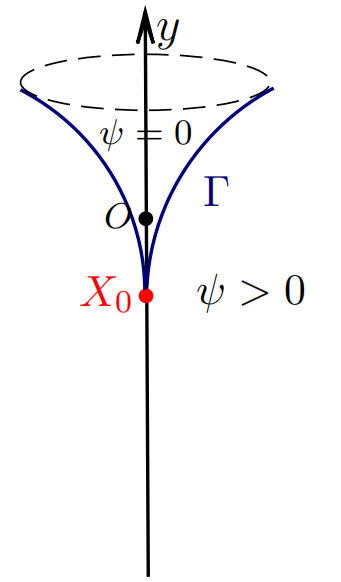}
\caption*{(a) Up vertical cusp}
	\end{minipage}%
	\begin{minipage}[t]{0.48\linewidth}
		\centering
		\includegraphics[width=3cm, height=5cm]{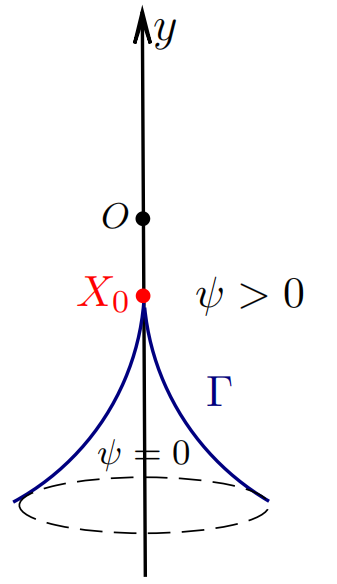}
\caption*{(b) Down vertical cusp}
	\end{minipage}%
		\caption*{Fig. 5. Vertical cusp}
\end{figure}
	or $g_{2}(t)\neq y_{0}$ in $t\in(-t_{1},t_{1})\backslash\{0\}$, $g_{2}(t)-y_{0}$ changes sign at $t=0$, and  \begin{equation*}
	\lim\limits_{t\rightarrow0}\frac{g_{1}(t)}{g_{2}(t)-y_{0}}=0 \text{ (see Fig. 6)}.
	\end{equation*}
\begin{figure}[H]
		\includegraphics[width=3cm, height=5cm]{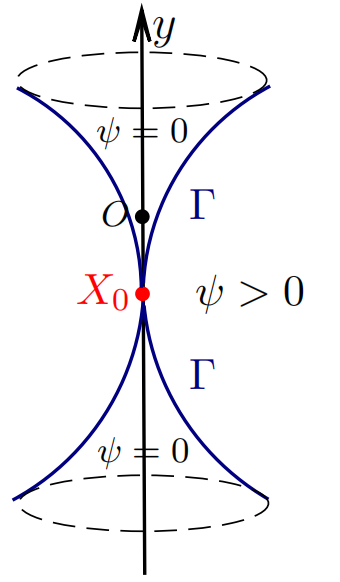}
\caption*{Fig. 6. Double vertical cusp}
	\end{figure}
\textbf{Case 2. Cusp.} If $\mathscr{D}^{x}_{X_{0},\psi}(0+)=0$, then
 $g_{2}(t)\neq y_{0}$ in $t\in(-t_{1},t_{1})\backslash\{0\}$, $g_{2}(t)-y_{0}$ does not change sign at $t=0$, and  \begin{equation*}
	\lim\limits_{t\rightarrow0}\frac{g_{1}(t)}{g_{2}(t)-y_{0}}=0 \text{ (see Fig. 7)}.
	\end{equation*}
\begin{figure}[H]
	\begin{minipage}[t]{0.48\linewidth}
		\centering
		\includegraphics[width=3cm, height=5cm]{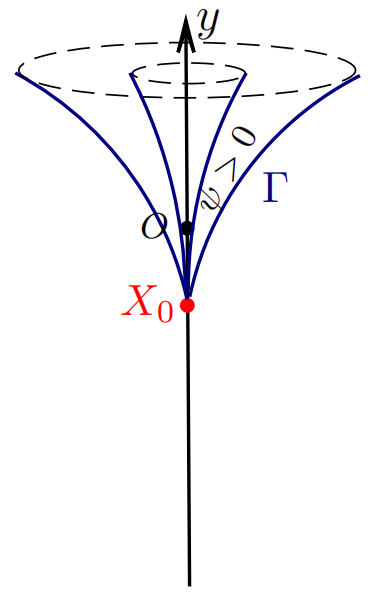}
	\end{minipage}%
	\begin{minipage}[t]{0.48\linewidth}
		\centering
		\includegraphics[width=3cm, height=5cm]{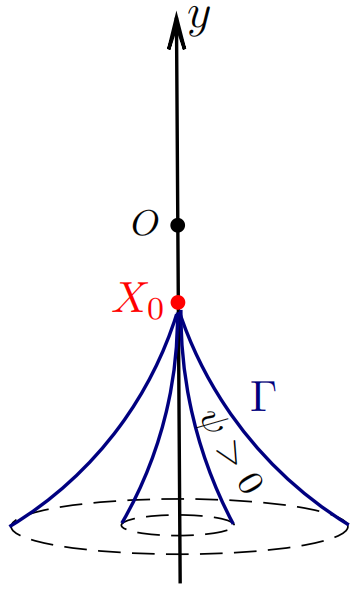}
	\end{minipage}%
		\caption*{Fig. 7. Cusp}

\end{figure}
Moreover, the blow-up limits $$\psi_{0}(x,y)=Cx^2\text{ or }\psi_{0}(x,y)\equiv0\text{ for Case 1}$$
 and
 $$\psi_{0}(x,y)\equiv0\text{ for Case 2}.$$
\end{theorem}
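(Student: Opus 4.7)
\bigskip\noindent\textbf{Proof proposal.}
The plan is to mimic the Varvaruca--Weiss strategy already outlined for Type 1, but with the parabolic scaling $r_m^{-2}$ adapted to the degenerate behaviour on the axis. First I would establish an almost-monotonicity formula for $\mathscr{D}^{x}_{X_{0},\psi}(r)$, computing $\frac{d}{dr}\mathscr{D}^{x}_{X_{0},\psi}(r)$ by differentiating under the integral sign in $\mathscr{D}_{1,X_{0},\psi}$ and $\mathscr{D}_{2,X_{0},\psi}$, inserting the domain variation identity \eqref{equation:variationalSolution} with the radial vector field $\eta(X)=X-X_{0}$, and exploiting that on $\partial B_r^+(X_0)$ the radial derivative equals $\psi$ up to an $L^{2}_{w}$-error thanks to Assumption~\ref{assumption1}. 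The key computation is that the quadratic part is $2$-homogeneous in this scaling, so that the terms involving $f(\psi)$ and the deviation of $y$ from $y_0$ produce only lower-order remainders; this yields $\frac{d}{dr}\mathscr{D}^{x}_{X_{0},\psi}(r)\geq -o(1)$, hence the limit $\mathscr{D}^{x}_{X_{0},\psi}(0+)$ exists.

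Next I would analyse the blow-up sequence $\psi_m(X)=\psi(X_0+r_mX)/r_m^2$. Using Assumption~\ref{assumption1} one obtains uniform $W^{1,2}_{w,loc}(\mathbb{R}^2_+)$ bounds; after passing to a subsequence one gets weak, and by standard elliptic improvement strong, convergence to $\psi_0\geq 0$ with $\psi_0\equiv 0$ on $\{x=0\}$. Scaling the PDE as in the discussion preceding the theorem, $\operatorname{div}_X(\nabla\psi_m/X_1)=-r_m^2 X_1 f(r_m^2\psi_m)\to 0$, and the Bernoulli condition becomes $|\nabla\psi_0|^2=-y_0 x^2$ on $\partial\{\psi_0>0\}$. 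Equality in the monotonicity formula in the limit forces $\psi_0$ to be $2$-homogeneous, and then writing $\psi_0=r^2\phi(\theta)$ in polar coordinates reduces the equation $x\Delta\psi_0=\partial_x\psi_0$ to
\begin{equation*}
(1-u^2)\phi_{uu}+2\phi=0,\qquad u=\cos\theta,
\end{equation*}
whose unique solution vanishing at $u=\pm 1$ and remaining bounded there is $\phi=C\sin^2\theta$, i.e.\ $\psi_0(x,y)=Cx^2$; the free-boundary condition then forces $C=\sqrt{-y_0}/2$ or $C=0$.

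At this point the dichotomy $\mathscr{D}^{x}_{X_{0},\psi}(0+)\in\{-y_0\int_{B_1^+}x\,dX,\,0\}$ follows from the explicit formula (cf.\ the remark preceding the theorem) by identifying the limit of $\int_{B_1^+}xI_{\{\psi_m>0\}}\,dX$: independence of the blow-up subsequence, together with the two possible homogeneous limits $Cx^2$ and $0$, and an upper-density argument (showing that the positivity set cannot partially fill $B_1^+$ when the limit is homogeneous of degree $2$ vanishing only on the axis) restricts the density to these two values. The geometric conclusions about $g(t)$ then come from interpreting Assumption~\ref{assumption:2}: if the density is the full value, any accumulation direction of $g(t)/|g(t)-X_0|$ must lie in the zero set of $\psi_0=Cx^2$, which is exactly the axis, forcing $g_1(t)/(g_2(t)-y_0)\to 0$; whether $g_2(t)-y_0$ changes sign at $t=0$ distinguishes the single from the double vertical cusp (Figs.~5--6). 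If the density is $0$, the positivity set is asymptotically negligible, which combined with the injectivity of $g$ rules out $g_2$ from changing sign at $0$, yielding the one-sided cusp in Fig.~7.

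\emph{Anticipated main obstacle.} The delicate step will be the monotonicity formula near a point on the singular axis $\{x=0\}$, where the density $1/x$ blows up and the admissible test vector fields are restricted by the condition $\eta_1=0$ on $\{x=0\}$. Choosing a cut-off radial field that respects this constraint while still producing the clean $2$-homogeneous scaling requires a two-scale truncation together with Assumption~\ref{assumption1}, which is also what controls the error terms in passing to the limit in the Bernoulli condition. A secondary subtlety is excluding partial fillings of $B_1^+$ in the blow-up, which is needed to reduce the set of admissible densities to the two tabulated values; this will rely on the rigidity of $2$-homogeneous solutions proved above, combined with a Federer-type measure-theoretic argument on the free boundary.
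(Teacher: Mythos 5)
Your overall strategy (monotonicity formula for $\mathscr{D}^{x}_{X_{0},\psi}$, blow-up to a $2$-homogeneous limit, classification via an angular ODE, then reading off the density and the tangency of the free boundary from the accumulation directions) is the same as the paper's, and your angular equation $(1-u^2)\phi_{uu}+2\phi=0$ is indeed the correct reduction of $x\Delta\psi_0=\partial_x\psi_0$ for $\psi_0=r^2\phi(\theta)$. However, there are concrete gaps. First, the classification step is incomplete: you solve the ODE only with boundary data at $u=\pm1$, i.e.\ you implicitly assume every connected component of $\{\psi_0>0\}$ reaches both ends of the axis. A priori a component could be a proper cone $\{\theta_1<\theta<\theta_2\}$ with $0<\theta_1<\theta_2<\pi$; nothing confines $\partial\{\psi_0>0\}$ to the axis here, since the constraint $\partial\{\psi>0\}\subset\{y\le0\}$ is lost after rescaling about $(0,y_0)$ with $y_0<0$. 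The paper excludes this case by passing to the conjugate function $\varphi_0$ (harmonic of degree $1$ after the axisymmetric lift to $\mathbb{R}^3$), writing its angular part as $aP_1+bQ_1$ and using the strict convexity of $Q_1$ to show that the two conditions $f'(z_1)=f'(z_2)=0$ force $a=b=0$; an equivalent Wronskian argument on your ODE would also work, but it has to be made. Second, your claim that the Bernoulli condition forces $C=\sqrt{-y_0}/2$ is unjustified: for $\psi_0=Cx^2$ the free boundary is $\{x=0\}$, where both $|\nabla\psi_0|^2=4C^2x^2$ and $-y_0x^2$ vanish identically, so the condition is vacuous and $C>0$ remains undetermined --- which is precisely why the theorem states the limit only as $Cx^2$.

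Third, the geometric conclusion. You argue that accumulation directions of the rescaled free boundary must lie in the zero set of $\psi_0$. That only works when $\psi_0=Cx^2$ with $C>0$; in the subcases $\psi_0\equiv0$ (which occur both for the full density and for density $0$) the zero set is all of $\mathbb{R}^2_+$ and your criterion gives nothing, yet the theorem still asserts $g_1(t)/(g_2(t)-y_0)\to0$ there. The paper's argument is measure-theoretic and works uniformly: if $\theta_0\notin\{0,\pi\}$ were an accumulation direction, then in a small ball $B_\rho(\sin\theta_0,\cos\theta_0)$ away from the axis the injective-curve hypothesis (Assumption \ref{assumption:2}) forces $\partial\{\psi_m>0\}$ to have length at least $2\rho-o(1)$, and the Bernoulli condition gives the measure $\operatorname{div}\bigl(\tfrac1x\nabla\psi_m\bigr)$ a mass at least $c(\theta_0,\rho)-cr_m^{3/2}>0$ there, contradicting $\operatorname{div}\bigl(\tfrac1x\nabla\psi_m\bigr)(B_\rho)\to\operatorname{div}\bigl(\tfrac1x\nabla\psi_0\bigr)(B_\rho)=0$. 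You need this (or an equivalent) to get $A_\pm\subset\{0,\pi\}$ in all cases, and then the case analysis on $a_+$, $a_-$ is what links the sign change of $g_2(t)-y_0$ to the value of the density; ``asymptotic negligibility of the positivity set'' by itself does not yield the tangency statement in Case 2.
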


The third result shows the possible profiles of the free boundaries near the Type 3 degenerate point, i.e., the original point.

\begin{theorem}[Type 3 degenerate point]\label{theorem:3}
	  Let $\psi$ be a weak solution of the problem \eqref{equation} satisfying Assumption \ref{assumption1} and Assumption \ref{assumption:2}. Then for $X_{0}=O$,
	  \begin{equation*}
	  	\begin{aligned}
	  		\mathscr{D}^{xy}_{X_{0},\psi}(0+)\in\Bigg\{&-\int_{B_{1}^{+}\cap\{(r\sin\theta,r\cos\theta); 0<\theta<\theta^{*}\}}xy\,dX,\\&-\int_{B_{1}^{+}}xy^{+}\,dX,-\int_{B_{1}^{+}}xy^{-}\,dX,0\Bigg\}.
	  	\end{aligned}
	  \end{equation*}
	  There are exactly three cases.
	
	\textbf{Case 1. Garabedian pointed bubble.} If $\mathscr{D}^{xy}_{X_{0},\psi}(0+)=-\int_{B_{1}^{+}\cap\{(r\sin\theta,r\cos\theta); 0<\theta<\theta^{*}\}}xy\,dX$, then $g_{1}(t)\neq 0$ in $(0,t_{1})$, and \begin{equation*}
	\lim\limits_{t\rightarrow 0+}\frac{g_{2}(t)}{g_{1}(t)}=\tan\theta^{*} \text{ (see Fig. 8)},
	\end{equation*}
	where $\theta^{*}:=\arccos z^{*}$ and $z^{*}\in(-1,0)$ satisfies $P^{\prime}_{3/2}(z^{*})=0$. Here, $P_{3/2}(z)$ is the Legendre function of the first kind (see Remark \ref{remark:LE}).
\begin{figure}[H]
		\centering
		\includegraphics[width=6.4cm, height=4.8cm]{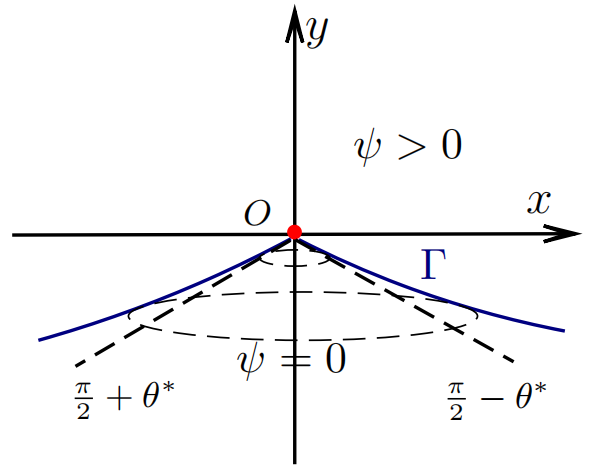}
	\caption*{Fig. 8. Garabedian pointed bubble}
\end{figure}
	
	\textbf{Case 2. Horizontally flatness.} If $\mathscr{D}^{xy}_{X_{0},\psi}(0+)\in\left\{-\int_{B_{1}^{+}}xy^{+}\,dX,-\int_{B_{1}^{+}}xy^{-}\,dX\right\}$, then $g_{1}(t)\neq0$ in $(0,t_{1})$ and \begin{equation*}
	\lim\limits_{t\rightarrow 0+}\frac{g_{2}(t)}{g_{1}(t)}=0 \text{ (see Fig. 9)}.
	\end{equation*}
\begin{figure}[H]
	\begin{minipage}[t]{0.48\linewidth}
		\centering
		\includegraphics[width=6.4cm, height=4.8cm]{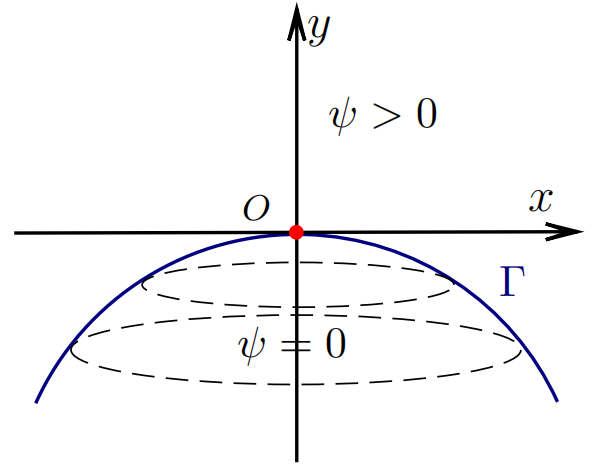}
\caption*{(a)}
	\end{minipage}%
	\begin{minipage}[t]{0.48\linewidth}
		\centering
		\includegraphics[width=6.4cm, height=4.8cm]{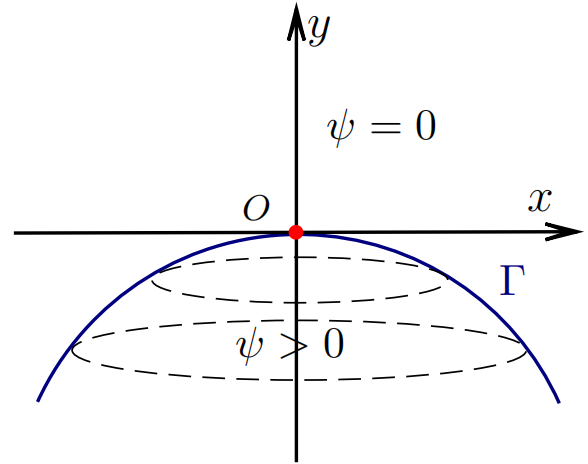}
\caption*{(b)}
	\end{minipage}%
\caption*{Fig. 9. Horizontally flatness}
\end{figure}
	
	\textbf{Case 3. Horizontally cusp.} If $\mathscr{D}^{xy}_{X_{0},\psi}(0+)=0$,  then $g_{1}(t)\neq0$ in $(-t_{1},t_{1})\backslash\{0\}$ and \begin{equation*}
	\lim\limits_{t\rightarrow 0}\frac{g_{2}(t)}{g_{1}(t)}=0 \text{ (see Fig. 10)}.
	\end{equation*}
\begin{figure}[H]
		\centering
		\includegraphics[width=6.4cm, height=4.8cm]{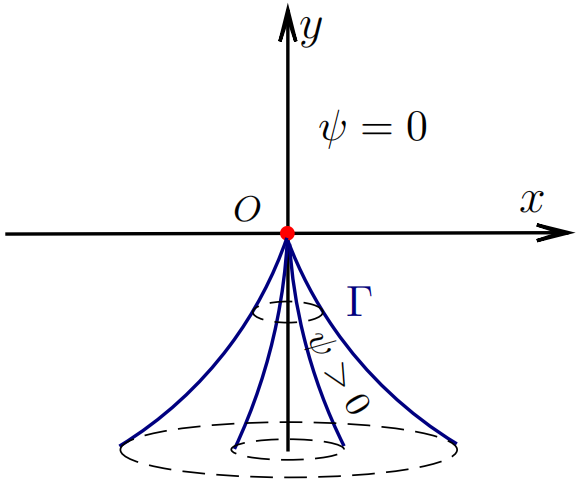}
	\caption*{Fig. 10. Horizontally cusp}
\end{figure}
In addition, the blow-up limits
$$\psi_{0}(x,y)=\psi_0(rsin\theta,rcos\theta)=C_0 r^{\frac{5}{2}}sin^2\theta P'_{3/2}(cos\theta)I_{\{(r sin\theta, r cos\theta):0<\theta<\theta^*\}}\text{ for Case 1 },$$
and $$\psi_{0}(x,y)\equiv0\text{ for Case 2 and Case 3}.$$
\end{theorem}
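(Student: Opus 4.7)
The plan is to adapt the Varvaruca--Weiss blow-up approach to the origin, using the scaling $\psi_{m}(X)=\psi(r_{m}X)/r_{m}^{5/2}$ appropriate for Type 3. The first step is a Weiss-type monotonicity formula for $\mathscr{D}^{xy}_{O,\psi}(r)$: I would show that $r\mapsto \mathscr{D}^{xy}_{O,\psi}(r)$ is (up to an $o(1)$ vorticity term) non-decreasing, with derivative equal to a non-negative boundary integral of $\left(\partial_{r}\psi-\tfrac{5}{2}\psi/r\right)^{2}/x$ on $\partial B_{r}^{+}$. This is obtained from the variational identity \eqref{equation:variationalSolution} by testing against $\eta(X)=\chi_{\varepsilon}(|X|)X$ and an analogous scaling vector field built from $\nabla\psi\cdot X$, and then letting $\varepsilon\to 0$; the growth bound $\psi^{2}\leq Cx^{2}(|y|+x)|X|^{2}$ provided by Assumption \ref{assumption1} is what makes every boundary integral finite. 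The monotonicity yields both the existence of $\mathscr{D}^{xy}_{O,\psi}(0+)$ and the fact that every blow-up limit of $\psi_{m}$ is homogeneous of degree $5/2$.

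Next, Assumption \ref{assumption1} gives the uniform bound $|\nabla\psi_{m}(X)|^{2}\leq Cx^{2}(|y|+x)$ on $B_{1}^{+}$, so $\psi_{m}$ is uniformly bounded in $W^{1,2}_{w,loc}(\mathbb{R}_{+}^{2})$ and one can extract a subsequence with $\psi_{m}\to\psi_{0}$ weakly in $W^{1,2}_{w,loc}$ and almost everywhere. Passing to the limit in \eqref{equation:variationalSolution}, the vorticity term carries a factor $r_{m}^{3/2}$ and disappears, while the Bernoulli term retains its structure, so $\psi_{0}$ is a variational solution of
\begin{equation*}
\operatorname{div}\!\left(\frac{1}{x}\nabla\psi_{0}\right)=0 \text{ in } \{\psi_{0}>0\}, \qquad \frac{1}{x^{2}}|\nabla\psi_{0}|^{2}=-y \text{ on } \partial\{\psi_{0}>0\},
\end{equation*}
and moreover homogeneous of degree $5/2$.

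The third step is a rigidity classification of such homogeneous solutions. In the polar coordinates $y=r\cos\theta$, $x=r\sin\theta$, the ansatz $\psi_{0}=r^{5/2}\sin^{2}\theta\,\phi(\cos\theta)$ reduces the equation $\operatorname{div}(\nabla\psi_{0}/x)=0$ to a Legendre/Gegenbauer-type ODE, and the only admissible non-trivial solution compatible with $\psi_{0}\geq 0$ and the Bernoulli condition is the Garabedian profile $C_{0}r^{5/2}\sin^{2}\theta\,P'_{3/2}(\cos\theta)I_{\{0<\theta<\theta^{*}\}}$ with $\theta^{*}=\arccos z^{*}$ and $P'_{3/2}(z^{*})=0$; the Bernoulli condition at $\theta=\theta^{*}$ then fixes $C_{0}$. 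Otherwise $\psi_{0}\equiv 0$, and the density limit $-\int_{B_{1}^{+}}xyI_{\{\psi_{m}>0\}}\,dX$ is determined entirely by the concentration behavior of $I_{\{\psi_{m}>0\}}$. A concentration-compactness argument, which uses \eqref{equation:variationalSolution} to rule out any positive-measure piece of the free boundary on which the density is strictly between the extremal configurations, forces the limit to equal one of the three values $-\int_{B_{1}^{+}}xy^{+}\,dX$, $-\int_{B_{1}^{+}}xy^{-}\,dX$, or $0$, giving altogether the four values listed.

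Finally, Assumption \ref{assumption:2} converts these density and support statements into the geometric conclusions about the curve $g(t)=(g_{1}(t),g_{2}(t))$. In Case 1 the positive set $\{\psi_{0}>0\}$ is the sector $\{0<\theta<\theta^{*}\}$, and injectivity of $g$ forces $g_{1}(t)\neq 0$ on $(0,t_{1})$ and $g_{2}(t)/g_{1}(t)\to\tan\theta^{*}$. In Case 2 the positive set fills exactly the upper or lower half-ball, so $g_{2}$ has a definite sign near $0^{+}$ and $g_{2}(t)/g_{1}(t)\to 0$. In Case 3 the support shrinks to a null set, forcing $g$ into arbitrarily thin horizontal strips on both sides of $0$ and hence producing the two-sided horizontal cusp. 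The main obstacles I expect are: (i) establishing the weighted monotonicity formula rigorously in the presence of the non-homogeneous gravity term $-xyI_{\{\psi>0\}}$ and the merely continuous vorticity $f$, in particular handling the weight $1/x$ carefully near the symmetry axis; and (ii) the rigidity step excluding all non-trivial homogeneous degree-$5/2$ solutions other than the Garabedian bubble, which rests on a delicate analysis of the reduced Legendre ODE under the nonlinear Bernoulli condition together with the sign constraint $\psi_{0}\geq 0$.
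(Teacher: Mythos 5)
Your outline follows essentially the same route as the paper: the Weiss-type monotonicity formula for $\mathscr{D}^{xy}_{O,\psi}$ obtained from the domain variation $\eta_\varepsilon(X)=\zeta_\varepsilon(|X|)X$ together with the integration-by-parts identity for the Dirichlet energy, homogeneity of degree $\tfrac52$ of the blow-up limits, strong $W^{1,2}_{w,loc}$ convergence, reduction to a Legendre-type ODE, classification of the limiting characteristic function $I_0$, and finally the passage from densities to the geometry of the curve $g$ via the connected sets $A_\pm$ and the measure $\operatorname{div}\bigl(\tfrac1x\nabla\psi_m\bigr)(B_\rho)$. The one technical variation is that you separate variables directly on $\psi_0$ with the ansatz $r^{5/2}\sin^2\theta\,\phi(\cos\theta)$, whereas the paper first passes to the conjugate potential $\varphi_0$ (defined by $\partial_x\varPhi_0=-\tfrac1x\partial_y\psi_0$, $\partial_y\varPhi_0=\tfrac1x\partial_x\psi_0$), which is harmonic in three dimensions and homogeneous of degree $\tfrac32$, so that the classical Legendre equation with the non-integer index $\tfrac32$ and the known asymptotics of $P_{3/2}$, $P_{3/2}(-\cdot)$ near $z=\pm1$ apply verbatim; the two reductions are equivalent.

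The place where your proposal is genuinely incomplete is exactly the step you flag as obstacle (ii), and it is worth recording how the paper closes it, because ``$\psi_0\ge 0$ plus the Bernoulli condition'' is not by itself the mechanism. On a connected component with angles $0\le\theta_1<\theta_2\le\pi$ one has $f=aP_{3/2}(z)+bP_{3/2}(-z)$, and three configurations survive the finiteness of $\int_V x|\nabla\varphi_0|^2\,dX$ and the natural boundary conditions $f'(\cos\theta_i)=0$: the bubble with $\theta_1=0$, $\theta_2=\theta^*$; the reflected bubble with $\theta_1=\pi-\theta^*$, $\theta_2=\pi$; and a two-sided cone with $0<\theta_1<\pi-\theta^*$ and $\theta^*<\theta_2<\pi$, whose existence the paper must actually rule in or out by proving that the Wronskian quotient $w(z)=P'_{3/2}(-z)/P'_{3/2}(z)$ is strictly increasing on each component of $(-1,1)\setminus\{z^*\}$ (using the Legendre Wronskian identity $P_{3/2}(z)P_{3/2}'(-z)-P_{3/2}(-z)P_{3/2}'(-z)=-\tfrac{2\sin(3\pi/2)}{\pi}\tfrac{1}{1-z^2}$). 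The last two configurations are then excluded not by positivity of $\psi_0$ but by item (\romannumeral4) of the definition of a variational solution: since $\partial\{\psi>0\}$ lies in the quarter-plane $\{y\le 0\}$, the blow-up limit solves the equation in all of $\{y>0\}$, which is incompatible with a free boundary ray at an angle $\theta_1<\tfrac\pi2$. If you execute your plan, you need this structural constraint (or an equivalent use of the sign forced by the Bernoulli condition $\tfrac1{x^2}|\nabla\psi_0|^2=-y$ together with Hopf's lemma at the boundary rays) explicitly; otherwise the classification admits two spurious profiles and the density list in the theorem is not obtained.
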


\begin{rmk}\label{remark:LE}
The form of a linear second-order differential equation
$$(1-x^2)\frac{d^2y}{dx^2}-2x\frac{dy}{dx}+a(a+1)y=0,$$
where $a$ is an arbitrary positive constant, is called the Legendre equation.

If $a$ is an integer, it follows from Section 7.3 in \cite{Lebedev} that the Legendre equation has two linearly independent solutions $P_{a}(z)$ and $Q_{a}(z)$ on $(-1,1)$, which are called the Legendre function of the first kind and the Legendre function of the second kind respectively.

If $a$ is not an integer, it follows from Section 7.2 in \cite{Lebedev} that the Legendre equation has two linearly independent solutions $P_{a}(z)$ and $P_{a}(-z)$ on $(-1,1)$.
\end{rmk}

\begin{rmk}
The  feature of Garabedian pointed bubble was first found by Garabedian in \cite{Garabedian1985},
he gave an example of an explicit solution of the problem \eqref{equation} for irrotational case, where the water dominated domain is a cone with vertex at the origin.
\end{rmk}

\begin{rmk}
In the next section, we will introduce two classical physical models on steady axisymmetric inviscid flows which formulate into the Bernoulli-type free boundary problem \eqref{equation}. The one is incompressible axis-symmetric rising jet in a channel, and the another one is the axis-symmetric bubble in a tube. In the first physical model, the air lies above the water, and in the second model, the water lies above the air. This is the main difference of the two physical models. In mathematical point of view, for the singularity on the axis, the up vertical cusp (Fig. 5 (a)) maybe formulate in the rising jet, and the down vertical cusp (Fig. 5 (b)) maybe occur in the second physical model.
\end{rmk}

The present paper is built up as follows. In Section 2, we first derive the Bernoulli's type free boundary problem \eqref{equation} from the two physical models on inviscid flow with general vorticity
in gravity field. Based on Weiss's monotonicity formula, we study the singularity near the  Type 1 degenerate point in Section 3. In Section 4 and Section 5, we investigate the singularity near the  Type 2 and Type 3 degenerate point, respectively.

			\section{Mathematical setting of the physical problem} In this section, we want to briefly introduce the derivation of the physical problems of steady axisymmetric flow with general vorticity in gravity field under some assumptions.

  The three-dimensional steady incompressible ideal water wave  is governed by the  steady Euler system in gravity field
  \begin{equation}\label{equation:eulerEquation1}
   \begin{cases}
    &\nabla\cdot\mathbf{u}=0,\\
    &(\mathbf{u}\cdot\nabla)\mathbf{u}+\nabla p+g\mathbf{e}^{3}=0,
   \end{cases}
  \end{equation}
 where $\mathbf{u}=(u_{1},u_{2},u_{3})$ is the velocity field, $p$   denotes the pressure, $g$ denotes the acceleration due to the gravity, $\mathbf{e}^{3}:=(0,0,1)$ and $(x_{1},x_{2},x_{3})\in\mathbb{R}^{3}$ is the space variable.
 And denote
  $\mathbf{w}=(w_{1},w_{2},w_{3})= \nabla\times\mathbf{u}$ as the vorticity of the fluid.

  Since we focus on the axisymmetric flows, we take $Y=x_{3}$ be the axis of symmetry and we let $x=\sqrt{x_{1}^{2}+x_{2}^{2}}$.  $u=\sqrt{u_{1}^{2}+u_{2}^{2}}$, $v_{\theta}$ and $v=u_{3}$  stand for the radial velocity, the swirl velocity  and the vertical velocity,   respectively. Moreover, let
  \begin{equation*}
   \mathbf{e}_{x}=\left(\frac{x_{1}}{x},\frac{x_{2}}{x},0\right), \mathbf{e}_{\theta}=\left(\frac{x_{2}}{x},-\frac{x_{1}}{x},0\right)\text{ and }\mathbf{e}_{Y}=(0,0,1),
  \end{equation*}
 be the standard orthonormal unit vectors in the cylindrical coordinate.
  Thus, the velocity field $\mathbf{v}$ in the axisymmetric coordinate can be written as
  \begin{equation*}
   \mathbf{v}=u\mathbf{e}_{x}+v_{\theta}\mathbf{e}_{\theta}+v\mathbf{e}_{Y}.
  \end{equation*}
  We only consider the flow without swirl in the present paper, namely,   $v_{\theta}\equiv 0$.

  Therefore, the Euler system \eqref{equation:eulerEquation1}  can be rewritten  as
  \begin{equation}\label{equation:eulerEquation2}
   \begin{cases}
    &(xu)_{x}+(xv)_{Y}=0,\\
    &(xu^{2})_{x}+(xuv)_{Y}+xp_{x}=0,\\
    &(xuv)_{x}+(xv^{2})_{Y}+xp_{Y}+g=0.
   \end{cases}
  \end{equation}
  And then, the vorticity of the fluid can be written as
  \begin{equation*}
   \begin{cases}
    &w_{1}=\frac{x_{2}}{x}\left(\frac{\partial v}{\partial x}-\frac{\partial u}{\partial Y}\right),\\
    &w_{2}=-\frac{x_{1}}{x}\left(\frac{\partial v}{\partial x}-\frac{\partial u}{\partial Y}\right),\\
    &w_{3}=0,
   \end{cases}
  \end{equation*}
 which gives that
  \begin{equation}\label{equation:rotational2}
   \mathbf{w}=\left(\frac{\partial v}{\partial x}-\frac{\partial u}{\partial Y}\right)\mathbf{e}_{\theta}\equiv w\mathbf{e}_{\theta}.
  \end{equation}
 Here, $w=\frac{\partial v}{\partial x}-\frac{\partial u}{\partial Y}$ is the scalar vorticity.

 In what follows, we would like to introduce two classical physical models in incompressible fluid.

 \subsection{Rising jet}\label{subsection:risingFlow}
  \begin{figure}[H]
  \includegraphics[width=100mm]{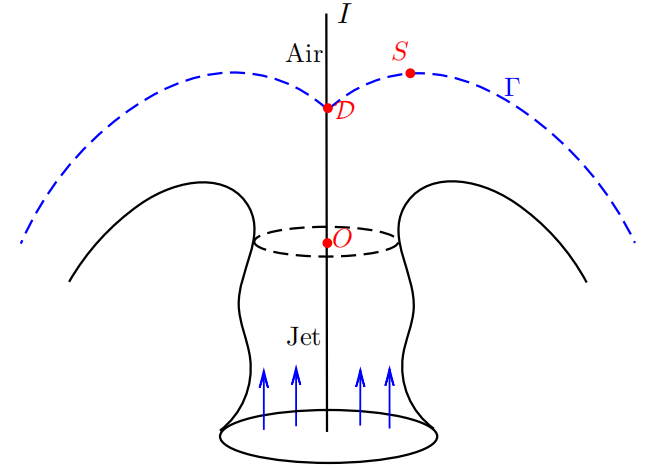}
  \caption{Axially symmetric rising jet issuing from a nozzle}\label{picture:model}
 \end{figure}
 The first physical problem reads as the incompressible inviscid fluid with vorticity issuing from a semi-infinitely long nozzle acted on by gravity, and the free surface initiates from some point $\mathcal{D}$ on the symmetric axis and extends to the far field (see Figure \ref{picture:model} and references \cite{Christodoulides2009} and \cite{Schulkes1994}).

  In order to clarify the physical problem, we give the definition of a semi-infinitely long nozzle as follows. As shown in Figure \ref{picture:mode2}, $N$ stands for the solid nozzle wall. Suppose that $N$ is a continuous injective curve, which can be written by $a(t)=(a_{1}(t),a_{2}(t))$ in $t\in[0,+\infty)$ such that $a(0)=(b_{1},-1)$, where $b_{1}$ is a positive constant.

 \begin{figure}[H]
  \includegraphics[width=60mm]{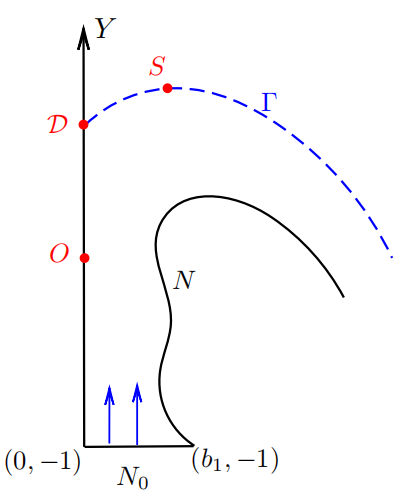}
  \caption{Axially symmetric rising jet issuing from a nozzle}\label{picture:mode2}
 \end{figure}
  On the other hand,  we assume that the flow issues from the entrance of the nozzle $N_0:=\{(x,-1)\mid 0\leq x\leq b_{1}\}$ and there exists  a free boundary $\Gamma$, which starts above the point $(0,-1)$ and does not intersect $N$. Moreover, denote by $I$ the symmetric axis, $D_{1}$ the possible fluid domain bounded by $I$,  $N$ and $N_0$, and $D_{2}$ the fluid domain bounded by $I$, $N$, $\Gamma$ and $N_0$.

  Next, we  want to introduce the vorticity-stream formulation for axisymmetric, inviscid flows without swirl.  The first equality in \eqref{equation:eulerEquation2} gives that there exists a stream function $\Psi$ such that
  \begin{equation*}
   u=\frac{1}{x}\frac{\partial\Psi}{\partial Y}\quad\text{ and } \quad v=-\frac{1}{x}\frac{\partial \Psi}{\partial x}.
  \end{equation*}
  We impose  that   the  vertical velocity and the scalar vorticity  in the inlet of the nozzle are $v_{0}(x)$ and $w_{0}(x)$ respectively. Moreover, we assume that $v_{0}(x)$ satisfies
 \begin{equation*}
  v_{0}(x)>0 \text{ for }x\in[0,b_{1}]\quad\text{ and }\quad \lim\limits_{x\rightarrow0+}\frac{w_{0}(x)}{x}\text{ exists},
 \end{equation*}
 and that the streamline is simple topological in the fluid domain $D_{2}$.

  Then, for any $(x,Y)\in D_{2}$, it can be pulled back along one streamline to the initial point $(\tilde{x},0)$ in the inlet. Thus we have
  \begin{equation*}
   \Psi=-2\pi\int_{0}^{\tilde{x}}sv_{0}(s)\,ds,
  \end{equation*}
  which together with the implicit function theorem gives that $\Psi$ is a function of $\tilde{x}$ and it is strictly increasing with respect to $\tilde{x}$. Therefore, we denote $\tilde{x}:=k(\Psi)$ as a function of $\Psi$.  In addition,  $\frac{w}{x}$ is an invariance along each streamline, that is,
  \begin{equation*}
   (u,v)\cdot\nabla\left(\frac{w}{x}\right)=0.
  \end{equation*}

  In view of the definition  \eqref{equation:rotational2}, a short calculation shows that the stream function $\Psi$ satisfies
  \begin{equation*}
   -\operatorname{div}\left(\frac{1}{x}\nabla\Psi\right)=w=x\cdot\frac{w}{x}=x\frac{w_{0}(k(\Psi))}{k(\Psi)}:=xh(\Psi).
  \end{equation*}

 Without loss of generality, we can impose the Dirichlet boundary  conditions as follows,
 \begin{equation*}
  \Psi=0\text{ on }I\cup\Gamma\quad\text{ and }\quad\Psi=-Q\text{ on }N,
 \end{equation*}
where $Q=\int_{0}^{b_{1}}2\pi xv_{0}(x)\,dx$.

  With the help of the equations of motion, we obtain the Bernoulli's law, which states that
  \begin{equation}\label{equation:bernoulli}
   \frac{1}{2}\left(u^{2}+v^{2}\right)+gY+p
  \end{equation}
  is a constant along each streamline. Moreover, we assume that the atmospheric pressure is a constant $p_{atm}$, and the constant pressure condition on the free surface gives that
  \begin{equation*}
   p=p_{atm}\quad\text{  on }\Gamma.
  \end{equation*}

  Thus, along the symmetric axis and the free boundary $\Gamma$, the identity \eqref{equation:bernoulli} gives that
  \begin{equation*}
  \frac{1}{2}v_{0}^{2}(0)+p(0,-1)=\frac{1}{2}\frac{\lvert\nabla\Psi\rvert^{2}}{x^{2}}+gY+p_{atm}\quad\text{ on }\Gamma.
  \end{equation*}
   We arrive at the boundary condition on $\Gamma$
  \begin{equation*}
   \begin{aligned}
    \frac{1}{2x^{2}}\lvert\nabla\Psi\rvert^{2}=\frac{1}{2}v_{0}^{2}(0)+p(0,-1)-p_{atm}-gY,
   \end{aligned}
  \end{equation*}
 which shows that
 \begin{equation*}
  \frac{1}{x^{2}}\lvert\nabla\tilde{\Psi}\rvert^{2}=\lambda-Y,
 \end{equation*}
where the  scaled function $\tilde{\Psi}$ is defined by $\tilde{\Psi}:=\frac{\Psi}{\sqrt{2g}}$ and $\lambda:=\frac{v_{0}^{2}(0)}{2g}+\frac{p(0,-1)}{g}-\frac{p_{atm}}{g}$. Moreover, we write $-y=\lambda-Y$. Then
  \begin{equation*}
   \frac{1}{x^{2}}\lvert\nabla\tilde{\Psi}\rvert^{2}=-y\text{  on }\Gamma.
  \end{equation*}
 And it is easy to check that the function $\tilde{\Psi}(x,Y)$ satisfies the following elliptic equation
 \begin{equation*}
  -\operatorname{div}\left(\frac{1}{x}\nabla\tilde{\Psi}\right)=x\frac{h(\sqrt{2g}\tilde{\Psi})}{\sqrt{2g}}.
 \end{equation*}

Therefore, taking $\psi(x,y)=-\tilde{\Psi}(x,\lambda+y)$ and $f(\psi)=\frac{h(\sqrt{2g}\tilde{\Psi})}{\sqrt{2g}}$, we formulate the following free boundary problem of the stream function that
  \begin{equation}\label{equation2}
   \begin{cases}
    \begin{alignedat}{2}
     \operatorname{div}\left(\frac{1}{x}\nabla \psi\right)&=-x f(\psi) \quad  &&\text{ in } D_{1} \cap\{\psi>0\},\\ \frac{1}{x^{2}}\left|\nabla \psi\right|^2&=-y  \quad  &&\text{ on }D_{1}\cap\partial\{\psi>0\},\\
     \psi=0\quad&\text{ on }I\cup\Gamma,\quad&&\psi=Q\quad\text{ on }N.
    \end{alignedat}
   \end{cases}
  \end{equation}

 \subsection{Bubble in a tube}
 The second physical model is a classical problem of a  bubble rising in a semi-finite long cylindrical tube (see references \cite{Doak2018} and \cite{Garabedian1985}). More precisely, it describes that an ideal rotational incompressible fluid acted on by gravity falls into a cylindrical nozzle and the free surface starts from some point $\mathcal{D}$ on the symmetric axis and goes into the far field, which is shown in Figure \ref{picture:fig1}.

 \begin{figure}[H]
  \includegraphics[width=50mm]{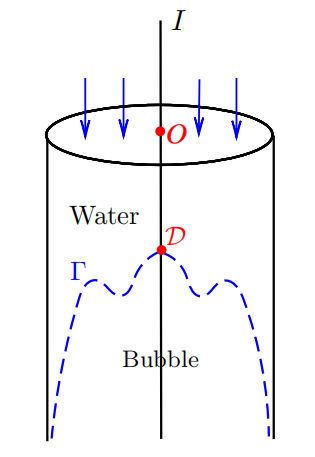}
  \caption{A  bubble rising in a semi-infinitely long cylindrical tube}\label{picture:fig1}
 \end{figure}
  Let $N$ be the nozzle wall and can be written by straight line: $\{(a,Y)\mid Y\leq0\}$ with $a>0$, $N_0:=\{(x,0)\mid 0\leq x\leq a\}$ is the entrance of the tube, see Figure \ref{picture:fig2}.
  \begin{figure}[H]
  \includegraphics[width=30mm]{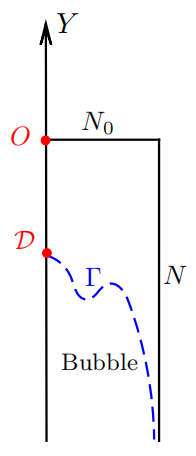}
  \caption{Bubble in a tube}\label{picture:fig2}
  \end{figure}
 Furthermore, we introduce a stream function $\Psi$ as \begin{equation*}
  u=\frac{1}{x}\frac{\partial\Psi}{\partial Y}\quad\text{ and }\quad v=-\frac{1}{x}\frac{\partial\Psi}{\partial x},
 \end{equation*}
and  impose that the vertical velocity and the scalar vorticity  in the inlet of the nozzle are $v_{0}(x)$ and $w_{0}(x)$, which satisfy
 \begin{equation*}
   v_{0}(x)<0\text{ for }x\in [0,a]\quad\text{ and }\quad\lim\limits_{x\rightarrow 0}\frac{w_{0}(x)}{x}\text{ exists.}
 \end{equation*}
Moreover, we assume that the streamlines are well-defined in the whole fluid field.

Following the notation as in Subsection \ref{subsection:risingFlow}, denote by $D_{1}$  the possible fluid domain bounded by $I$, $N$ and $N_{0}$, and $D_{2}$ the fluid domain bounded by $I$, $N$, $N_{0}$ and $\Gamma$.

Then similarly to the previous subsection, we formulate the following Bernoulli's type free boundary problem  of the scaled stream function $\psi(x,y)=-\frac{\Psi(x,\lambda+y)}{\sqrt{2g}}$
\begin{equation}\label{equation3}
 \begin{cases}
  \begin{alignedat}{2}
   \operatorname{div}\left(\frac{1}{x}\nabla \psi\right)&=-x f(\psi) \quad  &&\text{ in } D_{1} \cap\{\psi>0\},\\ \frac{1}{x^{2}}\left|\nabla \psi\right|^2&=-y  \quad  &&\text{ on }D_{1}\cap\partial\{\psi>0\},\\
   \psi=0&\text{ on }I\cup\Gamma,\quad&&\psi=Q\text{ on }N.
  \end{alignedat}
 \end{cases}
\end{equation}

The aim of this paper is to analyze the possible shapes of the free surface close to degenerate points. Thus we only consider the local behavior  of the solution near a degenerate point.

\section{The singularity near the Type 1 degenerate point}

In this section, we will investigate the possible wave profiles of the free boundary near the
 Type 1 degenerate point $X_{0}=(x_{0},0)$ for $x_{0}>0$. The main method we used is to analyze the possible blow-up limits in this point. In order to compute the possible explicit blow-up limits, we  use the monotonicity formula to compute the degree of the blow-up limit first.

Unless otherwise specified, we take $\psi$ be a variational solution of the problem \eqref{equation}, $X_{0}\in S_{\psi}^{s}$, and $r_{0}:=\min\{x_{0},\operatorname{dist}(X_{0},\partial\Omega)\}/2$ in this section.

The following lemma gains in interest if the first integral on the right-hand side of  \eqref{MFtwo} goes  to zero, then $\psi$ is a homogeneous function of degree  $\frac{3}{2}$.

\begin{lemma}\label{lemma:stagnationMonotonicity}
	   For almost everywhere $r\in(0,r_{0})$, we have
	\begin{equation}\label{MFtwo}
		\begin{aligned}
		\frac{d\mathscr{D}^{y}_{X_{0},\psi}(r)}{dr}	=&2r^{-3}\int_{\partial B_{r}(X_0)}\frac{1}{x}\left(\nabla \psi\cdot\nu-\frac{3}{2}\frac{\psi}{r}\right)^{2}\,d\mathcal{H}^1\\&-r^{-4}J_{1}(r)+\frac{3}{2}r^{-5}\int_{\partial B_{r}(X_0)}\frac{x-x_{0}}{x^{2}}\psi^{2}\,d\mathcal{H}^{1}-r^{-4}K_{1}(r),
		\end{aligned}
	\end{equation}
	where $\nu$ is the unit outer normal vector and
\begin{equation*}
	J_{1}(r)=\int_{ B_{r}(X_0)}\bigg(\frac{x-x_{0}}{x^{2}}\lvert\nabla\psi\rvert^{2}+(x-x_{0})yI_{\{\psi>0\}}\bigg)\,dX
\end{equation*}	
and \begin{equation*}
	\begin{aligned}
		K_{1}(r)=&\int_{ B_{r}(X_0)}(2F(\psi)(x-x_{0})+4xF(\psi))\,dX\\&-r\int_{ \partial B_{r}(X_0)}(2xF(\psi)-x\psi f(\psi))\,d\mathcal{H}^{1}.
	\end{aligned}
\end{equation*}
\end{lemma}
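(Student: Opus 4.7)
The plan is to differentiate $\mathscr{D}^{y}_{X_{0},\psi}(r)$ directly, compute the derivatives of $\mathscr{D}_{1,X_{0},\psi}$ and $\mathscr{D}_{2,X_{0},\psi}$ separately, and then feed in the first variation identity \eqref{equation:variationalSolution} with a radial test vector field, which is what produces the perfect-square term in \eqref{MFtwo}.

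For the volume piece, $\mathscr{D}_{1,X_{0},\psi}'(r)$ is simply the boundary integral of the integrand for a.e.\ $r$. For the boundary piece, the two-dimensional identity $\frac{d}{dr}\int_{\partial B_{r}}g\,d\mathcal{H}^{1}=\int_{\partial B_{r}}(\nabla g\cdot\nu+g/r)\,d\mathcal{H}^{1}$ applied to $g=\psi^{2}/x$ produces an extra term $-\frac{1}{r}\int_{\partial B_{r}}(x-x_{0})\psi^{2}/x^{2}$ coming from differentiating the weight. Combining in
\[
\tfrac{d}{dr}\mathscr{D}^{y}_{X_{0},\psi}=r^{-3}\mathscr{D}_{1,X_{0},\psi}'-3r^{-4}\mathscr{D}_{1,X_{0},\psi}-\tfrac{3}{2}r^{-4}\mathscr{D}_{2,X_{0},\psi}'+6r^{-5}\mathscr{D}_{2,X_{0},\psi},
\]
the term $\frac{9}{2}r^{-5}\mathscr{D}_{2,X_{0},\psi}$ arising from the expanded square and the correction $\frac{3}{2}r^{-5}\int_{\partial B_{r}}(x-x_{0})\psi^{2}/x^{2}$ appear automatically, reducing the problem to proving
\[
r\mathscr{D}_{1,X_{0},\psi}'(r)-3\mathscr{D}_{1,X_{0},\psi}(r)=2r\!\!\int_{\partial B_{r}(X_{0})}\!\!\tfrac{(\nabla\psi\cdot\nu)^{2}}{x}\,d\mathcal{H}^{1}-3\!\!\int_{\partial B_{r}(X_{0})}\!\!\tfrac{\psi\nabla\psi\cdot\nu}{x}\,d\mathcal{H}^{1}-J_{1}(r)-K_{1}(r).
\]

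To establish this I would insert the test vector field $\eta(X)=\phi_{\varepsilon}(|X-X_{0}|)(X-X_{0})$ into \eqref{equation:variationalSolution}, where $\phi_{\varepsilon}$ is a radial cutoff approximating $I_{[0,r)}$. A direct computation gives $\nabla\cdot\eta=r\phi_{\varepsilon}'+2\phi_{\varepsilon}$, $(\nabla\psi)^{T}\!D\eta\,\nabla\psi=r\phi_{\varepsilon}'(\nabla\psi\cdot\nu)^{2}+\phi_{\varepsilon}|\nabla\psi|^{2}$, $\operatorname{div}(xy\eta)=\phi_{\varepsilon}y(x-x_{0})+3xy\phi_{\varepsilon}+xy\phi_{\varepsilon}'r$, and $\operatorname{div}(x\eta)=\phi_{\varepsilon}(x-x_{0})+2x\phi_{\varepsilon}+x\phi_{\varepsilon}'r$. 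Passing to the limit $\varepsilon\to 0$ converts $\phi_{\varepsilon}'$-terms into $-r$ times integrals over $\partial B_{r}(X_{0})$ and $\phi_{\varepsilon}$-terms into integrals over $B_{r}(X_{0})$; the test field is admissible since the support is contained in $B_{r_{0}}(X_{0})\subset\{x>0\}$ by the choice $r_{0}\le x_{0}/2$. After rearrangement one obtains an expression for $r\int_{\partial B_{r}}|\nabla\psi|^{2}/x$ in terms of $2r\int_{\partial B_{r}}(\nabla\psi\cdot\nu)^{2}/x$, the pieces assembling into $J_{1}(r)$, the $F$-pieces of $K_{1}(r)$, and a residual volume term $-3\int_{B_{r}}xyI_{\{\psi>0\}}$.

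The last ingredient is converting the combination $-3\int_{B_{r}}|\nabla\psi|^{2}/x+3\int_{B_{r}}x\psi f(\psi)$, which remains after adding $-3\mathscr{D}_{1,X_{0},\psi}$ (and which absorbs the residual $xyI_{\{\psi>0\}}$ volume terms), into the boundary flux $-3\int_{\partial B_{r}}\psi\nabla\psi\cdot\nu/x$. This uses the PDE: $\operatorname{div}(\psi\nabla\psi/x)=|\nabla\psi|^{2}/x+\psi\operatorname{div}(\nabla\psi/x)=|\nabla\psi|^{2}/x-x\psi f(\psi)$ in $\{\psi>0\}$, so applying the divergence theorem on $B_{r}(X_{0})\cap\{\psi>0\}$ and discarding the part of the boundary lying on $\partial\{\psi>0\}$ (where $\psi\equiv 0$) gives the identity for a.e.\ $r$. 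Assembling and completing the square via $(\nabla\psi\cdot\nu-\tfrac{3\psi}{2r})^{2}=(\nabla\psi\cdot\nu)^{2}-3\psi\nabla\psi\cdot\nu/r+9\psi^{2}/(4r^{2})$ then reproduces \eqref{MFtwo}. The main technical obstacles are the $\varepsilon\to 0$ passage and the use of the divergence theorem on the non-smooth set $\{\psi>0\}$; both are handled by a standard approximation with $\{\psi>\delta\}\searrow\{\psi>0\}$ and a coarea/Fubini argument guaranteeing that the identity holds for a.e.\ $r\in(0,r_{0})$ under the weighted $W^{1,2}_{w,loc}$-regularity of $\psi$.
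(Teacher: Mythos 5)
Your proposal is correct and follows essentially the same route as the paper: the same radial test field $\zeta_\varepsilon(|X-X_0|)(X-X_0)$ in the domain-variation identity, the same integration by parts on $\{\psi>\delta\}$ to convert $\int_{B_r}\frac{1}{x}|\nabla\psi|^2-\int_{B_r}x\psi f(\psi)$ into the flux $\int_{\partial B_r}\frac{1}{x}\psi\nabla\psi\cdot\nu$, and the same completion of the square; your reduced identity for $r\mathscr{D}_{1,X_0,\psi}'-3\mathscr{D}_{1,X_0,\psi}$ is exactly the paper's intermediate formula \eqref{MFstepfour}. The only cosmetic difference is that you differentiate the boundary term $\mathscr{D}_{2,X_0,\psi}$ via the formula $\frac{d}{dr}\int_{\partial B_r}g=\int_{\partial B_r}(\nabla g\cdot\nu+g/r)$ rather than by the paper's change of variables, which yields the same $\frac{x-x_0}{x^2}\psi^2$ correction term.
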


	\begin{proof}
	For almost everywhere $r\in(0,r_{0})$, recall that
	\begin{equation*}
		\mathscr{D}^{y}_{X_{0},\psi}(r)=r^{-3}\mathscr{D}_{1,X_{0},\psi}(r)-\frac{3}{2}r^{-4}\mathscr{D}_{2,X_{0},\psi}(r),
	\end{equation*}
where
\begin{equation*}
	\mathscr{D}_{1,X_{0},\psi}(r)=\int_{B_{r}(X_{0})}\left(\frac{1}{x}|\nabla \psi|^{2}-xyI_{\{\psi>0\}}-x\psi f(\psi)\right)\,dX,
\end{equation*}
and
\begin{equation*}
	\mathscr{D}_{2,X_{0},\psi}(r)=\int_{\partial B_{r}(X_{0})}\frac{1}{x}\psi^{2}\,d\mathcal{H}^{1}.
\end{equation*}

	First of all, we compute that
	\begin{equation}\label{equation4}
		\begin{aligned}
			\frac{d (r^{-3}\mathscr{D}_{1,X_{0},\psi}(r))}{dr}=&-3r^{-4}\int_{B_{r}(X_{0})}\left(\frac{1}{x}|\nabla \psi|^{2}-xyI_{\{\psi>0\}}-x\psi f(\psi)\right)\,dX\\&+r^{-4}\int_{\partial B_{r}(X_{0})}\left(\frac{1}{x}|\nabla \psi|^{2}-xyI_{\{\psi>0\}}-x\psi f(\psi)\right)\,d\mathcal{H}^{1}.
		\end{aligned}
	\end{equation}
	In order to get rid of the terms $3r^{-4}\int_{B_{r}(X_{0})}xyI_{\{\psi>0\}}\,dX$ and $-r^{-4}\int_{\partial B_{r}(X_{0})}xyI_{\{\psi>0\}}\,d\mathcal{H}^{1}$, we define the function $\zeta_{\varepsilon}(t)$ for   sufficiently small $\varepsilon>0$ as
	\begin{equation*}
		\zeta_{\varepsilon}(t):=\max\left(0,\min\left(1,\frac{r-t}{\varepsilon}\right)\right).
	\end{equation*}
	
	Taking $\eta_{\varepsilon}(X):=\zeta_{\varepsilon}(|X-X_{0}|)(X-X_{0})$ as a test function after approximation in \eqref{equation:variationalSolution} and letting $\varepsilon\rightarrow 0$, we  obtain that
	\begin{equation}\label{MFsteptwo}
		\begin{aligned}
			&\int_{ B_{r}(X_0)}3xyI_{\{\psi>0\}}\,dX-r\int_{\partial B_{r}(X_0)}xyI_{\{\psi>0\}}\,d\mathcal{H}^{1}\\
=&-r\int_{\partial B_{r}(X_{0})}\frac{|\nabla\psi|^{2}}{x}\,d\mathcal{H}^{1}-\int_{ B_{r}(X_0)}\frac{x-x_{0}}{x^{2}}|\nabla\psi|^{2}\,dX+r\int_{\partial B_{r}(X_{0})}\frac{2}{x}(\nabla\psi\cdot\nu)^{2}\,d\mathcal{H}^{1}\\
			&-\int_{ B_{r}(X_0)}(2F(\psi)(x-x_{0})+4xF(\psi))\,dX+r\int_{\partial B_{r}(X_0)}2xF(\psi)\,d\mathcal{H}^{1}\\
			&-\int_{ B_{r}(X_0)}y(x-x_{0})I_{\{\psi>0\}}\,dX.
		\end{aligned}
	\end{equation}

Plugging the left-hand side of \eqref{MFsteptwo} into \eqref{equation4}, we get
\begin{equation*}
	\begin{aligned}
		\frac{d (r^{-3}\mathscr{D}_{1,X_{0},\psi}(r))}{dr}=&-3r^{-4}\int_{ B_{r}(X_0)}\left(\frac{1}{x}|\nabla\psi|^{2}-x\psi f(\psi)\right)\,dX-r^{-4}K(r)-r^{-4}J_{1}(r)\\&+r\int_{ \partial B_{r}(X_0)}\frac{2}{x}(\nabla\psi\cdot\nu)^{2}\,d\mathcal{H}^{1}.
	\end{aligned}
\end{equation*}

	Next, noting that $\psi$ is a variational solution of the problem \eqref{equation}, an integration by parts shows  that for any $\delta>0$,
	\begin{align*}
	&~~~~\int_{ B_{r}(X_0)}\frac{1}{x}\nabla\psi\cdot\nabla(\max(\psi-\delta,0)^{1+\delta})\,dX\\
	&=-\int_{B_{r}(X_0)}\operatorname{div}\left(\frac{1}{x}\nabla\psi\right)\cdot(\max(\psi-\delta,0)^{1+\delta})\,dX\\
	&~~~~+\int_{\partial B_{r}(X_{0})}\frac{1}{x}\max(\psi-\delta,0)^{1+\delta}\nabla\psi\cdot\nu \,d\mathcal{H}^{1}\\
	&=\int_{B_{r}(X_0)}xf(\psi)(\max(\psi-\delta,0)^{1+\delta})\,dX\\
	&~~~~+\int_{\partial B_{r}(X_{0})}\frac{1}{x}\max(\psi-\delta,0)^{1+\delta}\nabla\psi\cdot\nu \,d\mathcal{H}^{1},
	\end{align*}
	which implies that
	\begin{equation}\label{MFstepthree}
	\int_{ B_{r}(X_0)}\frac{1}{x}|\nabla\psi|^{2}\,dX=\int_{ B_{r}(X_0)}x\psi f(\psi)\,dX+\int_{\partial B_{r}(X_0)}\frac{1}{x}\psi\nabla\psi\cdot\nu \,d\mathcal{H}^{1}
	\end{equation}
	as $\delta\rightarrow0$.
	
	 Combining \eqref{equation4} and \eqref{MFstepthree}, we deduce that
	\begin{equation}\label{MFstepfour}
			\begin{aligned}
			\frac{d (r^{-3}\mathscr{D}_{1,X_{0},\psi}(r))}{dr}
			=&-3r^{-4}\int_{\partial B_{r}(X_0)}\frac{1}{x}\psi\nabla\psi\cdot\nu \,d\mathcal{H}^{1}
			+r^{-3}\int_{\partial B_{r}(X_0)}\frac{2}{x}(\nabla\psi\cdot\nu)^{2}\,d\mathcal{H}^{1}\\
			&-r^{-4}J_{1}(r)-r^{-4}K_{1}(r).
		\end{aligned}
	\end{equation}

 Moreover, by  substitution of variables, a direct calculation shows that
	\begin{equation*}
			\begin{aligned}
			&\frac{d}{d r}\left(r^{-4} \int_{\partial B_{r}(X_{0})} \frac{1}{x} \psi^{2}\, d \mathcal{H}^{1}\right)\\=&\frac{d}{dr}\bigg(r^{-3}\int_{ \partial B_{1}}\frac{1}{x_{0}+rx}\psi^{2}(X_{0}+rX)\,d\mathcal{H}^{1}\bigg)\\=&-3 r^{-5} \int_{\partial B_{r}(X_{0})} \frac{1}{x} \psi^{2} \,d \mathcal{H}^{1}-r^{-5} \int_{\partial B_{r}(X_{0})}\frac{x-x_{0}}{x^{2}} \psi^{2} \,d \mathcal{H}^{1}\\&
			+2 r^{-4} \int_{\partial B_{r}(X_{0})} \frac{1}{x} \psi \nabla \psi \cdot \nu \,d \mathcal{H}^{1}
		\end{aligned}
	\end{equation*}
	for any $\psi\in W^{1,2}_{w,loc}(\Omega)$, which together with \eqref{MFstepfour} yields \eqref{MFtwo}. Thus we complete the proof of Lemma \ref{lemma:stagnationMonotonicity}.
\end{proof}

Here are some elementary properties of the  blow-up sequence $\{\psi_{m}\}$, the blow-up limit $\psi_{0}$ and weighted density  $\mathscr{D}_{X_{0},\psi}^{y}(r)$ as $r\rightarrow 0+$. Recall that $$\psi_{m}(X):=
\frac{\psi(X_{0}+r_{m}X)}{r_{m}^{3/2}}\quad \text{ for any }X_{0}\in S_{\psi}^{s}.$$
\begin{lemma}\label{lemma:stagnations}
	If $\psi$ is a variational solution of the problem \eqref{equation} satisfying Assumption 1.1, then the following statements hold.
	
	(\romannumeral1) The limit  $\mathscr{D}_{X_{0},\psi}^{y}(0+)=\lim\limits_{r\rightarrow 0+}\mathscr{D}_{X_{0},\psi}^{y}(r)$  exists. Moreover, the limit is finite.
	
	(\romannumeral2)  If $\psi_{m}(X)$ converges to $\psi_{0}$ weakly in $W_{loc}^{1,2}(\mathbb{R}^{2})$  as $r_{m}\rightarrow0+$, then  $\psi_{0}(r X)=r^{3/2}\psi_{0}(X)$  for each $X\in\mathbb{R}^{2}$ and $r>0$. Moreover, $\psi_{m}\rightarrow\psi_{0}$ strongly in $W_{loc}^{1,2}(\mathbb{R}^{2})$  as $r_{m}\rightarrow0+$.
	
	(\romannumeral3) The  weighted density $\mathscr{D}_{X_{0},\psi}^{y}(0+)$ satisfies
	\begin{equation*}
	\mathscr{D}_{X_{0},\psi}^{y}(0+)=-x_{0}\lim\limits_{m\rightarrow +\infty}\int_{B_{1}}y I_{\{\psi_{m}>0\}}\,dX.
	\end{equation*}
\end{lemma}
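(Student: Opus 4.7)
My plan is to derive all three statements as corollaries of the monotonicity identity of Lemma \ref{lemma:stagnationMonotonicity}, by combining it with the pointwise bounds furnished by Assumption \ref{assumption1} and a scaling argument. I would work in the order (i) $\to$ (ii) $\to$ (iii), because the existence of the limit in (i) drives the vanishing of the squared term in the monotonicity formula, which in turn dictates both the homogeneity in (ii) and the cancellation of the gradient/boundary contributions in (iii).

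For (i), my first task is to show that on $B_r^+(X_0)$ with $r<r_0$, Assumption \ref{assumption1} gives $|\nabla\psi|^2/x^2\leq Cr$ together with $\psi^2\leq Cx^2 r^3$, while continuity of $f$ and $\psi(X_0)=0$ yield $|F(\psi)|,|\psi f(\psi)|=O(r^{3/2})$. Inserting these into the definitions of $J_1(r)$, $K_1(r)$ and of the boundary $\psi^2$ term produces the crude bounds $|J_1(r)|\leq Cr^4$, $|K_1(r)|\leq Cr^{7/2}$ and $\int_{\partial B_r(X_0)}\tfrac{|x-x_0|}{x^2}\psi^2\,d\mathcal H^1\leq Cx_0 r^5$, so the three error terms on the right of \eqref{MFtwo} are bounded uniformly in $r\in(0,r_0)$. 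Because the squared first term is nonnegative, the function $r\mapsto\mathscr D^y_{X_0,\psi}(r)$ minus an antiderivative of the remainder is nondecreasing; the same estimates force $\mathscr D^y_{X_0,\psi}(r)$ itself to be uniformly bounded on $(0,r_0)$. Hence $\mathscr D^y_{X_0,\psi}(0+)$ exists and is finite.

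For (ii), for any $0<\rho_1<\rho_2$ I would integrate \eqref{MFtwo} over $(r_m\rho_1, r_m\rho_2)$. The left-hand side tends to $0$ by (i), and by the bounds of the previous step the integrals of the three remainder terms also tend to $0$. After the change of variables $r=r_m\rho$ and the scaling defining $\psi_m$, the remaining nonnegative integral becomes
\begin{equation*}
2\int_{\rho_1}^{\rho_2}\rho^{-3}\int_{\partial B_\rho}\frac{1}{x_0+r_m x}\Bigl(\nabla\psi_m\cdot\nu-\tfrac{3}{2}\tfrac{\psi_m}{\rho}\Bigr)^2\,d\mathcal H^1\,d\rho,
\end{equation*}
and lower semicontinuity under weak $W^{1,2}_{loc}$ convergence, combined with $x_0+r_m x\to x_0$ uniformly on compacts, forces $\nabla\psi_0\cdot X=\tfrac{3}{2}\psi_0$ a.e.; Euler's relation then gives $\psi_0(rX)=r^{3/2}\psi_0(X)$. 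For strong convergence I would test the scaled equation $\mathrm{div}(\tfrac{1}{x_0+r_m x}\nabla\psi_m)=-r_m^{1/2}(x_0+r_m x)f(r_m^{3/2}\psi_m)$ against $\psi_m$ on $B_\rho$ (after the same cut-off approximation as in the proof of Lemma \ref{lemma:stagnationMonotonicity}), observe that the source term vanishes in the limit and that the boundary term converges by the compactness of the trace operator, and thereby deduce convergence of the $\psi_m$-energies; combined with weak convergence this upgrades to strong convergence in $W^{1,2}_{loc}(\mathbb{R}^2)$.

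For (iii), I would substitute $Y=X_0+r_m X$ in the definitions to obtain
\begin{align*}
r_m^{-3}\mathscr D_{1,X_0,\psi}(r_m)&=\int_{B_1}\frac{|\nabla\psi_m|^2}{x_0+r_m x}\,dX-\int_{B_1}(x_0+r_m x)\,y\,I_{\{\psi_m>0\}}\,dX\\
&\quad -r_m^{1/2}\int_{B_1}(x_0+r_m x)\psi_m f(r_m^{3/2}\psi_m)\,dX,\\
r_m^{-4}\mathscr D_{2,X_0,\psi}(r_m)&=\int_{\partial B_1}\frac{\psi_m^2}{x_0+r_m x}\,d\mathcal H^1.
\end{align*}
Step~2 together with continuity of $f$ passes the first, third and fourth terms to $\tfrac{1}{x_0}\int_{B_1}|\nabla\psi_0|^2\,dX$, $0$ and $\tfrac{1}{x_0}\int_{\partial B_1}\psi_0^2\,d\mathcal H^1$ respectively. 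Since $\psi_0$ is harmonic in $\{\psi_0>0\}$ (the limit of the scaled equation) and $3/2$-homogeneous, integration by parts using $\nabla\psi_0\cdot\nu=\tfrac{3}{2}\psi_0$ on $\partial B_1\cap\{\psi_0>0\}$ yields $\int_{B_1}|\nabla\psi_0|^2\,dX=\tfrac{3}{2}\int_{\partial B_1}\psi_0^2\,d\mathcal H^1$, so these two contributions cancel in $\mathscr D^y_{X_0,\psi}(0+)$. The existence of the remaining indicator-limit follows a posteriori from the existence of all the others, and the identity in (iii) drops out. I expect the principal obstacle to be the strong convergence claim in (ii): moving from weak convergence plus homogeneity of $\psi_0$ to equality of the full $L^2$-norms of the gradients requires a careful test-function argument in the scaled equation and a precise accounting for the $f$-term, which is only saved from being trivially controlled by the prefactor $r_m^{3/2}$ inside $f$ together with the extra $r_m^{1/2}$ outside.
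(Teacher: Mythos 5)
Your proposal is correct and follows essentially the same route as the paper: both rest on the monotonicity identity \eqref{MFtwo} together with the pointwise bounds from Assumption \ref{assumption1} to get integrability of the remainder terms (hence existence and finiteness of the limit in (i)), both obtain the $\tfrac{3}{2}$-homogeneity by integrating \eqref{MFtwo} over $(r_m\rho_1,r_m\rho_2)$ and passing to the limit, both upgrade to strong $W^{1,2}_{loc}$ convergence by testing the scaled equation against $\psi_m\eta$ to show convergence of the Dirichlet energies, and both compute (iii) by a change of variables plus the identity $\int_{B_1}|\nabla\psi_0|^2\,dX=\tfrac{3}{2}\int_{\partial B_1}\psi_0^2\,d\mathcal H^1$. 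The only cosmetic difference is that you phrase (i) as an almost-monotonicity argument while the paper argues by contradiction with two sequences, and your remark that the remainders are ``bounded uniformly'' should read ``integrable'' (your own estimate $|K_1(r)|\leq Cr^{7/2}$ gives $r^{-4}K_1(r)=O(r^{-1/2})$), which is all the argument actually needs.
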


\begin{proof}
(\romannumeral1).
First, it follows from Assumption 1.1 and  the continuity of $f(\psi)$  that
 $r^{-4}J_{1}(r)$, $r^{-5}\int_{\partial B_{r}(X_{0})}\frac{x-x_{0}}{x^{2}}\psi^{2}\,d\mathcal{H}^{1}$ and $r^{-4}K_{1}(r)$  are integrable with respect to $r$ on  $(0,r_{0})$. Then we argue by contradiction. Assume that the limit $\lim\limits_{r\rightarrow 0+}\mathscr{D}_{X_{0},\psi}^{y}(r)$ does not exist.

 Thus there exist two sequences $\{r_{k}^{1}\}_{k=1}^{\infty}$ and $\{r_{k}^{2}\}_{k=1}^{\infty}$ with $\lim\limits_{k\rightarrow\infty}r_{k}^{i}=0$ for $i=1,2$ and $r_{k}^{1}<r_{k}^{2}$, such that
 \begin{equation*}
 	\lim\limits_{k\rightarrow +\infty}\mathscr{D}_{X_{0},\psi}^{y}(r_{k}^{1})=c+\delta\quad\text{ and }\quad\lim\limits_{k\rightarrow +\infty}\mathscr{D}_{X_{0},\psi}^{y}(r_{k}^{2})=c,
 \end{equation*}
where $c$ is a constant and $\delta$ is a positive constant.

On the other hand, we have
\begin{equation*}
	\begin{aligned}
		\mathscr{D}_{X_{0},\psi}^{y}(r_{k}^{2})-\mathscr{D}_{X_{0},\psi}^{y}(r_{k}^{1})&=\int_{r_{k}^{1}}^{r_{k}^{2}}\frac{d\mathscr{D}_{X_{0},\psi}^{y}(r)}{dr}\,dr\\
		&\geq\int_{r_{k}^{1}}^{r_{k}^{2}}\left(-r^{-4}J_{1}(r)+\frac{3}{2}r^{-5}\int_{\partial B_{r}(X_0)}\frac{x-x_{0}}{x^{2}}\psi^{2}\,d\mathcal{H}^{1}-r^{-4}K_{1}(r)\right)\,dr,
	\end{aligned}
\end{equation*}
which implies that
\begin{equation*}
	-\delta\geq0\text{ as }k\rightarrow\infty.
\end{equation*}
Hence we obtain a contradiction since $\delta>0$.

(\romannumeral2). In view of  Assumption \ref{assumption1}, it is easy to check that $\psi_{m}\in W^{1,\infty}(B_{r}(O))$, for each $0<r<+\infty$. Integrating the equality  \eqref{MFtwo} with respect to $r$ on $(r_{m}r_{1},r_{m}r_{2})$, where $0<r_{1}<r_{2}<r_{0}$ and $r_{m}\rightarrow0+$ as $m\rightarrow+\infty$, and changing variables, we obtain
\begin{align*}
2\int_{B_{r_{2}(O)}\backslash B_{r_{1}}(O)}&|X|^{-5}\frac{1}{x_{0}+r_{m}x}\left(\nabla \psi_{m}(X)\cdot X-\frac{3}{2}\psi_{m}(X)\right)^{2}\,dX\\&\leq \mathscr{D}_{X_{0},\psi}^{y}(r_{m}r_{2})-\mathscr{D}_{X_{0},\psi}^{y}(r_{m}r_{1})+\int_{r_{m}r_{1}}^{r_{m}r_{2}}r^{-4}J_{1}(r)\,dr\\&~~~~+\frac{3}{2}\int_{r_{m}r_{1}}^{r_{m}r_{2}}r^{-5}\int_{\partial B_{r}(X_0)}\frac{|x-x_{0}|}{x^{2}}\psi^{2}\,d\mathcal{H}^{1}dr+\int_{r_{m}r_{1}}^{r_{m}r_{2}}r^{-4}|K_{1}(r)|\,dr\\&\rightarrow 0\text{ as }m\rightarrow\infty.
\end{align*}

Since $\psi_{m}(X)$ converges to $\psi_{0}$ weakly in $W^{1,2}_{loc}(\mathbb{R}^{2})$, we arrive at the equality  $$\nabla \psi_{0}(X)\cdot X-\frac{3}{2}\psi_{0}(X)=0\text{ a.e. in }\mathbb{R}^{2}.$$     This gives that the blow-up limit $\psi_{0}$ is a homogeneous function of degree  $\frac{3}{2}$.

Next, we will show that $\psi_{m}\rightarrow\psi_{0}$ strongly in $W^{1,2}_{loc}(\mathbb{R}^{2})$.

It follows from the compact embedding $W^{1,\infty}(U)\subset\subset L^{\infty}(U)\subset\subset L^{2}(U)$, when $U$ is a bounded   domain, that there exists a subsequence still relabeled by $\{\psi_{m}\}$ such that $\psi_{m}\rightarrow\psi_{0}$ strongly in $L^{2}_{loc}(\mathbb{R}^{2})$.

Therefore, it suffices to show that there exists a subsequence still relabeled by $\{\psi_{m}\}$ such that $\nabla\psi_{m}\rightarrow\nabla\psi_{0}$ strongly in $L^{2}_{loc}(\mathbb{R}^{2})$.
With the help of  Proposition 3.32 in \cite{Brezis2010}, our aim is to prove that
\begin{equation}\label{equation:lem2.2}
	\limsup\limits_{m\rightarrow\infty}\parallel\nabla\psi_{m}\parallel_{L^{2}(U)}\leq\parallel\nabla\psi_{0}\parallel_{L^{2}(U)},
\end{equation} where $U$ is any compact subset of $\mathbb{R}^{2}$.

For each $m$, $\psi_{m}$ satisfies
\begin{equation*}
\operatorname{div}\left(\frac{1}{x_{0}+r_{m}x}\nabla\psi_{m}\right)=-(x_{0}+r_{m}x)r_{m}^{1/2}f(r_{m}^{3/2}\psi_{m})
\end{equation*}
in  $B_{r_{0}/r_{m}}(O)\cap\{\psi_{m}>0\}$.
Taking  $m\rightarrow\infty$, one has
\begin{equation}\label{equation:lem2.2.1}
	\Delta\psi_{0}=0\quad\text{ in }\{\psi_{0}>0\}
\end{equation}  since $\psi_{m}$ converges  locally uniformly to $\psi_{0}$ in $\{\psi_{m}>0\}$.

Therefore, as in the proof of the equation \eqref{MFstepthree} in Lemma \ref{lemma:stagnationMonotonicity}, we obtain
\begin{align*}
&\int_{\mathbb{R}^{2}}\frac{1}{x_{0}}|\nabla \psi_{m}|^{2} \eta\left(-\frac{r_{m}x}{x_{0}}+o\left(\frac{r_{m}x}{x_{0}}\right)^{2}\right) \,dX+\int_{\mathbb{R}^{2}} \frac{1}{x_{0}}|\nabla \psi_{m}|^{2} \eta \,dX\\
=&\int_{\mathbb{R}^{2}}  \frac{1}{x_{0}+r_{m}x}|\nabla\psi_{m}|^{2}\eta \,dX\\
=&-\int_{\mathbb{R}^{2}}\psi_{m}\operatorname{div}\left(\frac{1}{x_{0}+r_{m}x}\nabla\psi_{m}\right)\eta \,dX-\int_{\mathbb{R}^{2}} \psi_{m} \frac{1}{(x_{0}+r_{m} x)} \nabla \psi_{m} \cdot \nabla \eta \,dX,\\
\text{ which converges to }&-\int_{\mathbb{R}^{2}} \psi_{0}\Delta\psi_{0}\eta\,dX-\int_{\mathbb{R}^{2}} \psi_{0} \frac{1}{x_{0}} \nabla \psi_{0} \cdot \nabla \eta \,dX\\&=\frac{1}{x_{0}} \int_{\mathbb{R}^{2}}\left|\nabla \psi_{0}\right|^{2} \eta \,dX
\end{align*}
for any $\eta\in C^{1}_{0}(\mathbb{R}^{2})$ as $m\rightarrow\infty$,
 which implies \eqref{equation:lem2.2}.

(\romannumeral3). A direct calculation shows that
\begin{align*}
\mathscr{D}_{X_{0},\psi}^{y}(0+)=&\lim\limits_{r\rightarrow0+}r^{-3}\int_{B_{r}(X_{0})}\left(\frac{1}{x}|\nabla \psi|^{2}-xyI_{\{\psi>0\}}-x\psi f(\psi)\right)\,dX\\&-\lim\limits_{r\rightarrow0+}\frac{3}{2}r^{-4}\int_{\partial B_{r}(X_{0})}\frac{1}{x}\psi^{2}\,d\mathcal{H}^{1}\\
=&-\lim\limits_{m\rightarrow+\infty}\int_{B_{1}}(x_{0}+r_{m}x)(y_{0}+r_{m}y)I_{\{\psi_{m}>0\}}\,dX\\
&-\lim\limits_{m\rightarrow+\infty}\int_{B_{1}}(x_{0}+r_{m}x)r_{m}^{\frac{3}{2}}\psi_{m}f(r_{m}^{3/2}\psi_{m})\,dX\\
&+\lim\limits_{m\rightarrow+\infty}\int_{  B_{1}}\frac{1}{x_{0}+r_{m}x}\lvert\nabla\psi_{m}\rvert^{2}\,dX-\lim\limits_{m\rightarrow+\infty}\frac{3}{2}\int_{ \partial B_{1}}\frac{1}{x_{0}+r_{m}x}\psi_{m}^{2}\,d\mathcal{H}^{1}\\
=&-\lim\limits_{m\rightarrow+\infty}\int_{B_{1}}(x_{0}+r_{m}x)(y_{0}+r_{m}y)I_{\{\psi_{m}>0\}}\,dX\\
&+\int_{B_{1}}\frac{1}{x_{0}}\lvert\nabla\psi_{0}\rvert^{2}\,dX-\frac{3}{2}\int_{ \partial B_{1}}\frac{1}{x_{0}}\psi_{0}^{2}\,d\mathcal{H}^{1}.
\end{align*}

Recalling that $\psi_{0}(X)$ is a homogeneous function of degree $\frac{3}{2}$, we obtain that
\begin{equation*}
	\int_{B_{1}}\frac{1}{x_{0}}\lvert\nabla\psi_{0}\rvert^{2}\,dX=\frac{3}{2}\int_{ \partial B_{1}}\frac{1}{x_{0}}\psi_{0}^{2}\,d\mathcal{H}^{1},
\end{equation*}
which implies that
\begin{equation*}
	\mathscr{D}_{X_{0},\psi}^{y}(0+)=-x_{0}\lim\limits_{m\rightarrow +\infty}\int_{B_{1}}y I_{\{\psi_{m}>0\}}\,dX.
\end{equation*}
\end{proof}

As an immediate consequence of the preceding Lemma \ref{lemma:stagnationMonotonicity} and Lemma \ref{lemma:stagnations}, we have the following proposition, which follows closely to Theorem 3.8 in \cite{Varvaruca2012c}. We present the proof for the convenience of the reader. Moreover, note that if $\psi$ is a weak solution of \eqref{equation} satisfying Assumption 1.1, it is easy to check that $I_{\{\psi>0\}}$ is a function of bounded variation locally in $\Omega\cap\{x>0\}\cap\{y\neq 0\}$.

\begin{proposition}\label{pro:2.3}
Assume that  $\psi$ is a weak solution of the problem \eqref{equation} satisfying Assumption 1.1, then
 the possible blow-up limits and the corresponding weighted densities are:

either  $$\psi_{0}\left(r\sin\theta,r\cos\theta\right)=\frac{\sqrt{2}}{3}x_{0}r^{3/2}\cos\left(\frac{3}{2}\theta-\frac{\pi}{2}\right)I_{\left\{(r\sin\theta,r\cos\theta)\mid \frac{2}{3}\pi<\theta<\frac{4}{3}\pi\right\}}$$ and $$\mathscr{D}_{X_{0},\psi}^{y}(0+)=-x_{0}\int_{B_{1}\cap\left\{(r\sin\theta,r\cos\theta)\mid \frac{2}{3}\pi<\theta<\frac{4}{3}\pi\right\}}y \,dX;$$

or $$\psi_{0}\equiv0$$ and $$\mathscr{D}_{X_{0},\psi}^{y}(0+)\in\Bigg\{-x_{0}\int_{B_{1}}y^{+}\,dX,-x_{0}\int_{B_{1}}y^{-}\,dX,0\Bigg\}.$$
\end{proposition}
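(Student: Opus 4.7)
The plan is to combine the homogeneity and strong $W^{1,2}$ convergence of $\psi_m$ supplied by Lemma \ref{lemma:stagnations} with a passage-to-the-limit in both the PDE and the variational identity \eqref{equation:variationalSolution}, and then split into the cases $\psi_0 \not\equiv 0$ and $\psi_0 \equiv 0$.

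First, along any blow-up sequence $r_m \to 0+$, Lemma \ref{lemma:stagnations}(ii) yields, after extraction, strong $W^{1,2}_{loc}(\mathbb{R}^2)$ convergence $\psi_m \to \psi_0$ together with the $3/2$-homogeneity of $\psi_0$. The equation satisfied by $\psi_m$, namely $\operatorname{div}(\nabla \psi_m/(x_0 + r_m x)) = -(x_0 + r_m x) r_m^{1/2} f(r_m^{3/2} \psi_m)$, passes to $\Delta \psi_0 = 0$ on $\{\psi_0 > 0\}$ by continuity of $f$ and the vanishing factor $r_m^{1/2}$. Next I would pass to the limit in the rescaled variational identity \eqref{equation:variationalSolution}: the terms involving $F(\psi_m)$ or an extra $r_m$ power drop out, the principal quadratic terms converge by strong $W^{1,2}$ convergence, and for the characteristic-function term I extract a further subsequence so that $I_{\{\psi_m>0\}} \to I_A$ in $L^1_{loc}$, using BV-compactness together with uniform perimeter bounds on $\{\psi > 0\}$ near $X_0$ (guaranteed by the weak-solution regularity on $\{xy \neq 0\}$ and Assumption \ref{assumption1}). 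The limit identity reads
\begin{equation*}
\int_{\mathbb{R}^2} \Bigl( \tfrac{1}{x_0} |\nabla \psi_0|^2 \operatorname{div}\eta - \tfrac{2}{x_0} \nabla \psi_0 \, D\eta\, \nabla \psi_0 - \operatorname{div}(x_0 y\,\eta)\, I_A \Bigr) dX = 0.
\end{equation*}
Scale invariance of the blow-up, combined with this identity, shows that $A$ is a cone with vertex at the origin.

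In the case $\psi_0 \not\equiv 0$, writing $\psi_0 = r^{3/2}\phi(\theta)$ and using harmonicity of $\psi_0$ gives $\phi'' + \tfrac{9}{4}\phi = 0$ on each component $(\alpha,\beta)$ of $\{\phi>0\}$, hence the first integral $(\phi')^2 + \tfrac{9}{4}\phi^2 \equiv C$ on such a component. The free-boundary condition extracted from the limit identity, namely $|\nabla\psi_0|^2 = -x_0^2 y$ on $\partial\{\psi_0>0\}$, reduces on a boundary ray $\theta = \alpha$ to $(\phi'(\alpha))^2 = -x_0^2 \cos\alpha$; the same holds at $\beta$, so $\cos\alpha = \cos\beta$ forces $\beta = 2\pi - \alpha$, while the harmonic ODE pins $\beta - \alpha = 2\pi/3$. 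This system has the unique solution $(\alpha,\beta) = (2\pi/3, 4\pi/3)$, and the presence of any second component would require another $2\pi/3$-wide sector with matching $|\cos|$ at its endpoints and disjoint from the first—geometrically impossible. The amplitude is pinned by $C = x_0^2/2$, which yields the stated explicit Stokes profile. Lemma \ref{lemma:stagnations}(iii) then delivers the corresponding weighted density $-x_0 \int_{B_1 \cap \{2\pi/3 < \theta < 4\pi/3\}} y\, dX$.

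In the case $\psi_0 \equiv 0$, the quadratic terms in the limit identity vanish and the identity collapses to $\int_{\mathbb{R}^2} \operatorname{div}(x_0 y\,\eta) I_A \, dX = 0$ for every admissible $\eta$. Rewriting this as $x_0 \int y\,\eta \cdot d(\nabla I_A) = 0$ (using $I_A \in BV_{loc}$), one concludes that the vector measure $y\,\nabla I_A$ vanishes, so the reduced boundary $\partial^* A$ lies in $\{y = 0\}$. Combined with the cone structure, $A$ must be one of $\emptyset$, $\{y>0\}$, $\{y<0\}$, or $\mathbb{R}^2$, and the weighted density $-x_0 \int_{B_1} y\, I_A\, dX$ takes exactly the three values $-x_0 \int_{B_1} y^+ dX$, $-x_0 \int_{B_1} y^- dX$, and $0$. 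The chief technical point throughout is the $L^1_{loc}$ convergence of the characteristic functions, for which the uniform perimeter bound on the free boundary near a non-degenerate stagnation point is the key structural input.
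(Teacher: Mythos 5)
Your proposal is correct and follows essentially the same route as the paper's proof: strong convergence and $3/2$-homogeneity of the blow-up from Lemma \ref{lemma:stagnations}, passage to the limit in the variational identity \eqref{equation:variationalSolution} with BV-compactness for the characteristic functions, the harmonic ODE $\phi''+\frac{9}{4}\phi=0$ in polar coordinates pinning the $2\pi/3$ sector and the amplitude $\frac{\sqrt{2}}{3}x_{0}$ via the limiting Bernoulli condition, and the classification of the constant values of $I_{0}$ on $\{y>0\}$ and $\{y<0\}$ when $\psi_{0}\equiv 0$. The only point worth tightening is that the condition $|\nabla\psi_{0}|^{2}=-x_{0}^{2}y$ on $\partial\{\psi_{0}>0\}$ is not immediate from the limit identity, which a priori only yields $|\nabla\psi_{0}|^{2}=-x_{0}^{2}y(1-\bar{I}_{0})$ on the smooth boundary rays; one must first exclude $\bar{I}_{0}=1$ on the zero side (the paper does this via Hopf's lemma), and your ``cone structure of $A$'' assertion is both unproved and unnecessary, since $I_{0}$ being locally constant on $\{y\neq 0\}\setminus\operatorname{supp}\psi_{0}$ already suffices.
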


\begin{proof}
	Let $\{\psi_{m}\}$ be the blow-up sequence defined in \eqref{equation:blowUp1} and assume that $\psi_{m}$ converges to $\psi_{0}$ weakly in $W^{1,2}_{loc}(\mathbb{R}^{2})$ as $m\rightarrow+\infty$. Then thanks to Lemma \ref{lemma:stagnations} (\romannumeral2), $\psi_{m}$ converges to $\psi_{0}$ strongly in $W_{loc}^{1,2}(\mathbb{R}^{2})$. The rest proof is divided into three steps.
	
	\textbf{Step 1. } We would like  to derive the form of  the function $\psi_{0}(x,y)$ in the polar coordinates under the assumption that $\{\psi_{0}>0\}$ is not an empty set.
	
	Let us introduce polar coordinates $(r,\theta)$ with center at the origin, such that $\theta=0$ corresponds to the $y$-axis. Recalling the fact \eqref{equation:lem2.2.1}, we have
\begin{equation}\label{equation:pro1.3}
	\Delta\psi_{0}=0\text{ in }\{\psi_{0}>0\}.
\end{equation}

Note that $\psi_{0}$ is a homogeneous function of degree  $\frac{3}{2}$, thus $\psi_{0}(x,y)$ can be rewritten as $\psi_{0}(r,\theta)=r^{3/2}f(\theta)$.  It follows from \eqref{equation:pro1.3} that the function $f(\theta)$ satisfies
\begin{equation*}
		f''(\theta)+\frac{9}{4}f(\theta)=0\text{ in }\{\psi_{0}>0\}.
\end{equation*}
After a direct calculation, one obtains
\begin{equation*}
	f(\theta)=C_{1}\cos\left(\frac{3}{2}\theta+\theta_{1}\right).
\end{equation*}  Here $C_{1}$  is a positive constant and $\theta_{1}$   is a constant, which will be determined later.

Therefore, we see that each connected component of $\{\psi_{0}>0\}$ is a cone with vertex at the origin and an angle of 120 degrees. Since $\partial\{\psi>0\}$ is contained in the lower half-plane $\{(x,y)\mid x\geq 0,y\leq 0\}$, we have that $\partial\{\psi_{0}>0\}$ is contained in the lower half-plane $\{(x,y)\mid x\geq 0, y\leq 0\}$. Thus $\{\psi_{0}>0\}$  has at most one connected component.

\textbf{Step 2. } In order to determine the constants $C_{1}$ and $\theta_{1}$, we will verify the boundary condition on $\partial\{\psi_{0}>0\}$ under the assumption that $\{\psi_{0}>0\}$ is non-empty.

	For any $\eta=(\eta_{1},\eta_{2})\in C^{1}_{0}(\mathbb{R}^{2};\mathbb{R}^{2})$, $\eta_{m}(X)=(\eta_{m,1},\eta_{m,2}):=\eta\left(\frac{X-X_{0}}{r_{m}}\right)$. Then it follows from the equality \eqref{equation:variationalSolution} that
	\begin{equation}
		\begin{aligned}
			0=&\int_{\Omega}\bigg(\frac{1}{x}|\nabla\psi|^{2}\nabla\cdot\eta_{m}-\frac{1}{x^{2}}|\nabla \psi|^{2}\eta_{m,1}-\frac{2}{x}\nabla\psi \cdot D\eta_{m}\cdot\nabla\psi\notag\\
			&~~~~- \operatorname{div}(xy\eta_{m})I_{\{\psi>0\}}-2F(\psi)\operatorname{div}(x\eta_{m})\bigg)\,dX,
		\end{aligned}
	\end{equation}
	which implies that
\begin{equation}\label{equation:pro1.2}
	\begin{aligned}
		0=&\int_{\Omega_{0}}r_{m}^{2}\bigg(\frac{1}{x_{0}+r_{m}x}|\nabla\psi_{m}|^{2}\operatorname{div}\eta -r_{m}\frac{1}{(x_{0}+r_{m}x)^{2}}|\nabla \psi_{m}|^{2}\eta_{1}-\frac{2}{x_{0}+r_{m}x}\nabla\psi_{m} D\eta\cdot\nabla\psi_{m}\\
		&~~~~- (x_{0}+r_{m}x)yI_{\{\psi_{m}>0\}}\operatorname{div}\eta -(r_{m}y\eta_{1}+\left(x_{0}+r_{m}x\right)\eta_{2})I_{\{\psi_{m}>0\}}\\
		&~~~~-2F(r_{m}^{3/2}\psi_{m})\left(\eta_{1}+\frac{x_{0}+r_{m}x}{r_{m}}\operatorname{div}\eta\right)\bigg)\,dX,
	\end{aligned}
\end{equation}
where $\Omega_{0}:=\{X\in\mathbb{R}^{2}\mid X_{0}+r_{m}X\in\Omega\}$.

 Moreover,  we derive from the compact embedding from BV space into $L^{1}$ space that there exists a subsequence (not relabeled) $\{\psi_{m}\}$ and a function $I_{0}\in L^{1}_{loc}(\mathbb{R}^{2})$ such that $I_{\{\psi_{m}>0\}}$ converges to   $I_{0}$ strongly in $L^{1}_{loc}(\mathbb{R}^{2})$. Moreover,
 \begin{equation}\label{equation:3}
 	I_{0}\in\{0,1\}\text{ a.e. in }\mathbb{R}^{2}\quad\text{ and }\quad I_{0}=1\text{ in }\{\psi_{0}>0\}.
 \end{equation}

Taking $m\rightarrow+\infty$ in  \eqref{equation:pro1.2}, one has that $\psi_{0}$ is a homogeneous solution with
\begin{equation}\label{equation:pro1.5}
	\begin{aligned}
		0=\int_{\mathbb{R}^{2}}\left(\frac{1}{x_{0}}\left|\nabla\psi_{0}\right|^{2}\nabla\cdot\eta-\frac{2}{x_{0}}\nabla\psi_{0} D\eta\cdot\nabla\psi_{0}- x_{0}yI_{0}\operatorname{div}\eta -x_{0}\eta_{2}I_{0}\right)\,dX.
	\end{aligned}
\end{equation}

 We observe from
 \eqref{equation:pro1.5} that if $\psi_{0}=0$, then one has
\begin{equation*}
	\int_{\mathbb{R}^{2}}x_{0}I_{0}\operatorname{div}(y\eta)\,dX=0,
\end{equation*}
which gives that $I_{0}$ is a constant in each connected component of $\{\psi_{0}=0\}\cap\{y\neq 0\}$, denoted as $\bar{I}_{0}$.

Choosing any point $X_{0}\in\partial\{\psi_{0}>0\}\backslash\{O\}$, there exists some small $r_{0}>0$ such that  the outer normal to $\partial\{\psi_{0}>0\}$ has the constant normal $\nu(X_{0})$ in $B_{r_{0}}(X_{0})\cap\partial\{\psi_{0}>0\}$. Define $\eta(X):=\zeta(X)\nu(X_{0})$ for any $\zeta(X)\in C_{0}^{1}(B_{r_{0}}(X_{0}))$ and put $\eta(X)$ into the equation \eqref{equation:pro1.5}. Integrating by parts gives that
\begin{equation}\label{equation:pro1.7}
	\int_{B_{r_{0}}(X_{0})\cap\partial\{\psi_{0}>0\}}\lvert\nabla\psi_{0}\rvert^{2}\zeta\,d\mathcal{H}^{1}=\int_{B_{r_{0}}(X_{0})\cap\partial\{\psi_{0}>0\}}-x_{0}^{2}y(1-\bar{I}_{0})\zeta\,d\mathcal{H}^{1}.
\end{equation}
 Indeed, it follows from the Hopf's lemma that $\lvert\nabla\psi_{0}\rvert\neq0$ on $B_{r_{0}}(X_{0})\cap\partial\{\psi_{0}>0\}$. This yields that $\bar{I}_{0}\neq 1$. Thus one has  \begin{equation*}
 	\bar{I}_{0}=0.
 \end{equation*} Therefore,  \eqref{equation:pro1.7} in fact implies that
\begin{equation}\label{equation:pro1.8}
	\lvert\nabla\psi_{0}\rvert^{2}=-x_{0}^{2}y\quad\text{ on }\partial\{\psi_{0}>0\}.
\end{equation}

\textbf{ Step 3. } We will establish the possible explicit form of $\psi_{0}$ with the help of \eqref{equation:pro1.8}.

Since $\{\psi_{0}>0\}$ has at most one connected component, we have the following two possibilities.

\textbf{ Case 1.}  $\{\psi_{0}>0\}$ is empty, namely, $\psi_{0}\equiv0$ in $\mathbb{R}^{2}$. Note that \begin{equation*}
	\mathscr{D}_{X_{0},\psi}^{y}(0+)=-x_{0}\lim\limits_{m\rightarrow +\infty}\int_{B_{1}}y I_{\{\psi_{m}>0\}}\,dX.
\end{equation*}
We derive from \eqref{equation:pro1.5} that $\bar{I}_{0}$ is a constant either $0$ or $1$ in $\mathbb{R}^{2}$. Hence there are three possible weighted densities.

\textbf{Case 1.1.}
If $\bar{I}_{0}=0$ or $\bar{I}_{0}=1$ in $\{\psi_{0}=0\}\cap(\{y>0\}\cup\{y<0\})$, then $$\mathscr{D}_{X_{0},\psi}^{y}(0+)=0.$$

\textbf{Case 1.2.} If  $\bar{I}_{0}=0$ in $\{\psi_{0}=0\}\cap\{y>0\}$ and $\bar{I}_{0}=1$ in $\{\psi_{0}=0\}\cap\{y<0\}$, then $$\mathscr{D}_{X_{0},\psi}^{y}(0+)=-x_{0}\int_{B_{1}}y^{-}\,dX.$$

\textbf{Case 1.3.} If  $\bar{I}_{0}=0$ in $\{\psi_{0}=0\}\cap\{y<0\}$ and $\bar{I}_{0}=1$ in $\{\psi_{0}=0\}\cap\{y>0\}$, then $$\mathscr{D}_{X_{0},\psi}^{y}(0+)=-x_{0}\int_{B_{1}}y^{+}\,dX.$$

\textbf{ Case 2.} There exists one connected component of $\{\psi_{0}>0\}$. It follows from \eqref{equation:pro1.8} that
\begin{equation*}
	\frac{9}{4}C_{1}^{2}=-x_{0}^{2}\cos\theta,
\end{equation*}
which holds on both boundaries of the cone.

Let us define the angles of the two boundaries of the cone to $y$-axis are $\tilde{\theta}_{1}$ and $\tilde{\theta}_{2}$ respectively with $\tilde{\theta}_{1}<\tilde{\theta}_{2}$, then we have the following relationships
\begin{equation*}
	\begin{cases}
		\cos\tilde{\theta}_{1}&=\cos\tilde{\theta}_{2},\\
		\tilde{\theta}_{2}-	\tilde{\theta}_{2}&=\frac{2}{3}\pi.
	\end{cases}
\end{equation*}
Thus one has that $\tilde{\theta}_{1}=\frac{2}{3}\pi$ and $\tilde{\theta}_{2}=\frac{4}{3}\pi$, which yields that $C_{1}=\frac{\sqrt{2}}{3}x_{0}$. Then in the set $\{\psi_{0}>0\}$, the function $\psi_{0}$ can be written as
\begin{equation*} \psi_{0}(r\sin\theta,r\cos\theta)=\frac{\sqrt{2}}{3}x_{0}r^{3/2}\cos\left(\frac{3}{2}\theta+\theta_{1}\right),
\end{equation*}
where the parameter $\theta_{1}$ will be determined later.

Moreover, it follows from the boundary condition
\begin{equation*}
	\psi_{0}(r\sin\tilde{\theta}_{1},r\cos\tilde{\theta}_{1})=\frac{\sqrt{2}}{3}x_{0}r^{3/2}\cos(\pi+\theta_{1})=0
\end{equation*}
that $\theta_{1}=-\frac{\pi}{2}+k\pi$, where $k$ is an integer. Without loss of generality, we take $\theta_{1}=-\frac{\pi}{2}$. Therefore,
\begin{equation*}
	\psi_{0}(r\sin\theta,r\cos\theta)=\frac{\sqrt{2}}{3}x_{0}r^{3/2}\cos\left(\frac{3}{2}\theta-\frac{\pi}{2}\right)I_{\left\{(r\sin\theta,r\cos\theta)\mid \frac{2}{3}\pi<\theta<\frac{4}{3}\pi\right\}}.
\end{equation*}
The corresponding density is
\begin{equation*}
	\mathscr{D}_{X_{0},\psi}^{y}(0+)=-x_{0}\int_{B_{1}\cap\left\{(r\sin\theta,r\cos\theta)\mid \frac{2}{3}\pi<\theta<\frac{4}{3}\pi\right\}}y \,dX.
\end{equation*}
\end{proof}

With the aid of the preceding lemmas and proposition, we are now in a position to prove Theorem \ref{theorem:1}.

\begin{proof}[Proof of Theorem \ref{theorem:1}]
	
	 For each $(x,y)\in\mathbb{R}^{2}$, we define the polar coordinates $(x,y)=(r\sin\theta,r\cos\theta)$ with center at the origin, such that $\theta=0$ corresponds to $y$-axis.
	
	Since the free boundary $\partial\{\psi>0\}$ is contained in $\{(x,y)\mid x\geq0,y\leq 0\}$, we consider the sets
	\begin{multline*}
		A_{\pm}:=\bigg\{\theta_{0}\in\left[\frac{\pi}{2},\frac{3\pi}{2}\right]\bigg|\text{ there exists a sequence }t_{m}\rightarrow 0\pm\text{ as }m\rightarrow\infty
		 \text{ such that }\\
-\arg(g(t_{m})-g(0))+\frac{\pi}{2}\rightarrow\theta_{0} \bigg\},
	\end{multline*}
where $\arg X$ is defined as the complex argument of $X$.
	
 We claim that
 \begin{equation}\label{equation33}
 A_{\pm}\subset\left\{\frac{\pi}{2},\frac{2\pi}{3},\frac{4\pi}{3},\frac{3\pi}{2}\right\}.
 \end{equation}
	
	Suppose not, then there exists a sequence $t_{m}\rightarrow0$ as $m\rightarrow\infty$ such that $-\arg(g(t_{m})-g(0))+\frac{\pi}{2}$ converges to  $\theta_{0}$ and $\theta_{0}\in(A_{+}\cup A_{-})\backslash\left\{\frac{\pi}{2},\frac{2\pi}{3},\frac{4\pi}{3},\frac{3\pi}{2}\right\}$.
	
	Set $r_{m}:=|g(t_{m})-g(0)|$ and $\psi_{m}(X):=\frac{\psi(X_0+r_{m}X)}{r_{m}^{3/2}}$. And for each $\rho>0$, we take a ball
	 $B_{\rho}:=B_{\rho}(\sin\theta_{0},\cos\theta_{0})$ satisfying
	 \begin{equation*}
	 	B_{\rho}\bigcap\left(\{(x,0)\mid x\in\mathbb{R}\}\bigcup\left\{\left(x,-\frac{1}{\sqrt{3}}x+\frac{1}{\sqrt{3}}x_{0}\right)\mid x\in\mathbb{R}\right\}\bigcup\left\{\left(x,\frac{1}{\sqrt{3}}x-\frac{1}{\sqrt{3}}x_{0}\right)\mid x\in\mathbb{R}\right\}\right)=\emptyset.
	 \end{equation*}
	
	Since $\psi_{m}\rightarrow\psi_{0}$ strongly in $W^{1,2}_{loc}(\mathbb{R}^{2})$, we deduce that
	\begin{equation*}
		\operatorname{div}\left(\frac{1}{x_0+r_mx}\nabla\psi_{m}\right)(B_{\rho})\rightarrow\Delta\psi_{0}(B_{\rho})=0
	\end{equation*}
in the sense of measure.	
	On the other hand, since
	\begin{equation*}
		\operatorname{div}\left(\frac{1}{x}\nabla\psi_{m}\right)(B_{\rho})=\int_{B_{\rho}\cap\{\psi_{m}>0\}}-(x_{0}+r_{m}x)r_{m}^{\frac{1}{2}}f\left(r_{m}^{3/2}\psi_{m}\right)\,dX+\int_{B_{\rho}\cap\partial\{\psi_{m}>0\}}\sqrt{-y}\,d\mathcal{H}^{1}
	\end{equation*}
	and the curve length of  $B_{\rho}\cap\partial\{\psi_{m}>0\}$  is at least $2\rho-o(1)$, we also have
	\begin{equation*}
	\operatorname{div}\left(\frac{1}{x}\nabla\psi_{m}\right)(B_{\rho})\geq-cr_{m}^{1/2}+\int_{B_{\rho}\cap\partial\{\psi_{m}>0\}}\sqrt{-y}\,d\mathcal{H}^{1}\geq c(\theta_{0},\rho)-cr_{m}^{1/2},
	\end{equation*} which contradicts with the fact that $\operatorname{div}\left(\frac{1}{x}\nabla\psi_{m}\right)(B_{\rho})\rightarrow0$ as $m\rightarrow\infty$.
	
	 Recalling the Assumption \ref{assumption:2} that the free boundaries near the point $X_{0}$ is a continuous injective curve, we obtain  that for any $t\in I\backslash\{0\}$, $g_{1}(t)\neq 0$. Otherwise, we have $\pi\in A_{\pm}$, a contradiction.
	
	Moreover, it is easy to check that $A_{+}$ and $A_{-}$ are connected sets, which implies that $A_{+}$ and $A_{-}$ contain only one element. Therefore we define
the elements in $A_{+}$ and $A_{-}$	 as \begin{equation*}
		a_{+}:=-\lim\limits_{t_{m}\rightarrow0+}\arg(g(t_{m})-g(0))+\frac{\pi}{2}
	\end{equation*}  and
\begin{equation*}
	a_{-}:=-\lim\limits_{t_{m}\rightarrow0-}\arg(g(t_{m})-g(0))+\frac{\pi}{2}
\end{equation*}
respectively.
	
	In the following,	 we first consider the case  $\mathscr{D}_{X_{0},\psi}^{y}(0+)=-x_{0}\int_{B_{1}\cap\{(r\sin\theta,r\cos\theta)\mid\frac{2\pi}{3}<\theta<\frac{4\pi}{3}\}}y\,dX$.
	
	We take  $\hat{B}_{1}:=B_{\rho}\left(\frac{\sqrt{3}}{2},-\frac{1}{2}+\frac{1}{\sqrt{3}}x_{0}\right)$ and $\hat{B}_{2}:=B_{\rho}\left(-\frac{\sqrt{3}}{2},\frac{1}{2}-\frac{1}{\sqrt{3}}x_{0}\right), \rho>0$. It is easily seen that $\Delta\psi_{0}(\hat{B}_{i})>0, i=1, 2$, which implies that $\left\{a_{+},a_{-}\right\}=\left\{\frac{2}{3}\pi,\frac{4}{3}\pi\right\}$. Thus we complete the proof
in Case 1.
	
 Next, if $\mathscr{D}_{X_{0},\psi}^{y}(0+)\in\left\{-x_{0}\int_{B_{1}}y^{+}\,dX,-x_{0}\int_{B_{1}}y^{-}\,dX,0\right\}$, we know that $\Delta\psi_{0}(\hat{B}_{i})=0, i=1, 2$, which implies that $a_{+}$ and $a_{-}$ cannot belong to $\left\{\frac{2}{3}\pi,\frac{4}{3}\pi\right\}$; otherwise,  by using similar arguments of claim \eqref{equation33}, we could also derive a contradiction.
	
	Therefore, we obtain that $\left\{a_{+},a_{-}\right\}\subset\left\{\frac{1}{2}\pi,\frac{3}{2}\pi\right\}$. And $a_{+}\neq a_{-}$ implies that $$\mathscr{D}_{X_{0},\psi}^{y}(0+)\in\left\{-x_{0}\int_{B_{1}}y^{+}\,dX,x_{0}\int_{B_{1}}y^{-}\,dX,0\right\}$$ while $a_{+}= a_{-}$ implies that $\mathscr{D}_{X_{0},\psi}^{y}(0+)=0$. Thus we complete the proof in Case 2 and Case 3.
\end{proof}

\section{The singularity near the Type 2 degenerate point}

For this section we turn our attention to study the possible profiles of the free boundary near the Type 2 degenerate point. To this aim, we take $\psi$ be a variational solution of the problem \eqref{equation}   and $X_0\in S_{\psi}^{d}$ in this section unless otherwise stated.

We derive first the monotonicity formula at the  Type 2 degenerate point. Observe that if the first integral on the right-hand side of identity \eqref{MFthree}  equals to zero, then $\psi$ is a homogeneous function of degree $2$.

Let  $r_{0}:=\operatorname{dist}(X_{0},\partial\Omega)/2$ in this subsection.
\begin{lemma}\label{lemma:degenerateMonotonicity}
	For almost everywhere $r\in(0,r_{0})$, we have that
	\begin{equation}\label{MFthree}
		\begin{aligned}
			\frac{d\mathscr{D}_{X_{0},\psi}^{x}(r)}{dr}=&2 r^{-3} \int_{\partial B_{r}^{+}(X_{0})} \frac{1}{x}\left(\nabla \psi \cdot \nu-2 \frac{\psi}{r}\right)^{2} \,d \mathcal{H}^{1} \\
			&-r^{-4} \int_{B_{r}^{+}(X_{0})}(y-y_{0}) x I_{\{\psi>0\}} \,dX\\
			&-r^{-4}K_{2}(r)-r^{-4}\int_{ B_{r}^{+}(X_0)}x\psi f(\psi)\,dX,
		\end{aligned}
	\end{equation}
where \begin{equation*}
	\begin{aligned}
		K_{2}(r)=&\int_{ B_{r}^{+}(X_0)}6xF(\psi)\,dX-r\int_{ \partial B_{r}^{+}(X_0)}(2xF(\psi)-x\psi f(\psi))\,d\mathcal{H}^{1}.
	\end{aligned}
\end{equation*}
\end{lemma}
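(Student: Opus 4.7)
The plan is to mirror the proof of Lemma~\ref{lemma:stagnationMonotonicity}, adapted to the half-ball $B^{+}_{r}(X_{0})$ and to the homogeneity exponent $2$ (in place of $3/2$). I would split
\begin{equation*}
\frac{d\mathscr{D}^{x}_{X_{0},\psi}(r)}{dr}=\frac{d}{dr}\bigl(r^{-3}\mathscr{D}_{1,X_{0},\psi}(r)\bigr)+\frac{d}{dr}\bigl(-2r^{-4}\mathscr{D}_{2,X_{0},\psi}(r)\bigr)
\end{equation*}
and compute each piece directly. The first gives $-3r^{-4}\mathscr{D}_{1,X_{0},\psi}(r)$ plus the integrand of $\mathscr{D}_{1}$ taken over the arc $\partial B_{r}(X_{0})\cap\{x>0\}$; the second, after the change of variables $X=X_{0}+r\tilde{X}$ (here $x=r\tilde{x}$, so the weight $1/x$ is absorbed cleanly into the Jacobian and no $(x-x_{0})\psi^{2}/x^{2}$ correction of the Type~1 kind appears, since $x_{0}=0$), reduces to $8r^{-5}\int_{\partial B^{+}_{r}(X_{0})}\tfrac{\psi^{2}}{x}\,d\mathcal{H}^{1}-4r^{-4}\int_{\partial B^{+}_{r}(X_{0})}\tfrac{\psi\nabla\psi\cdot\nu}{x}\,d\mathcal{H}^{1}$.

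Two auxiliary identities then do the real work. The first is the analogue of \eqref{MFsteptwo}: I insert the test field $\eta_{\varepsilon}(X)=\zeta_{\varepsilon}(|X-X_{0}|)(X-X_{0})$ into \eqref{equation:variationalSolution} and let $\varepsilon\to 0$. Because $X_{0}$ lies on the axis, the first component $\eta_{\varepsilon,1}=\zeta_{\varepsilon}(|X-X_{0}|)\,x$ vanishes on $\{x=0\}$, so $\eta_{\varepsilon}$ is admissible and no axial boundary contribution appears. Computing $\operatorname{div}(xy(X-X_{0}))=4xy-xy_{0}$ and $\operatorname{div}(x(X-X_{0}))=3x$ and passing to the limit expresses $r\int_{\partial B^{+}_{r}(X_{0})}\tfrac{|\nabla\psi|^{2}}{x}\,d\mathcal{H}^{1}$ as the sum of bulk quantities $-\int\tfrac{|\nabla\psi|^{2}}{x}$, $-\int(4xy-xy_{0})I_{\{\psi>0\}}$, $-\int 6xF(\psi)$ and surface quantities $2r\int\tfrac{(\nabla\psi\cdot\nu)^{2}}{x}$, $r\int xyI_{\{\psi>0\}}$, $2r\int xF(\psi)$. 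The second identity is the analogue of \eqref{MFstepthree}: testing $\operatorname{div}(\tfrac{1}{x}\nabla\psi)=-xf(\psi)$ against a truncation of $\psi$ gives $\int_{B^{+}_{r}(X_{0})}\tfrac{|\nabla\psi|^{2}}{x}\,dX=\int_{B^{+}_{r}(X_{0})}x\psi f(\psi)\,dX+\int_{\partial B^{+}_{r}(X_{0})}\tfrac{\psi\nabla\psi\cdot\nu}{x}\,d\mathcal{H}^{1}$, the axial boundary contributing zero because $\psi=0$ there and $\nabla\psi/x$ is bounded by items (i)--(ii) of the variational solution definition.

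The rest is algebraic assembly. Substituting the PDE identity into $-3r^{-4}\mathscr{D}_{1,X_{0},\psi}$ collapses it to $-3r^{-4}\int_{\partial B^{+}_{r}(X_{0})}\tfrac{\psi\nabla\psi\cdot\nu}{x}+3r^{-4}\int_{B^{+}_{r}(X_{0})}xyI_{\{\psi>0\}}$. Then using the bulk variational identity to replace the direct surface term $r^{-3}\int\tfrac{|\nabla\psi|^{2}}{x}$ triggers four simultaneous simplifications: the two $\pm r^{-3}\int xyI_{\{\psi>0\}}$ surface contributions cancel exactly; the bulk coefficients of $xyI_{\{\psi>0\}}$ combine as $3-4=-1$ and, after separating the $y_{0}$ shift, yield $-r^{-4}\int(y-y_{0})xI_{\{\psi>0\}}$; the three cross pieces $2r^{-3}\int\tfrac{(\nabla\psi\cdot\nu)^{2}}{x}$, $-8r^{-4}\int\tfrac{\psi\nabla\psi\cdot\nu}{x}$, $8r^{-5}\int\tfrac{\psi^{2}}{x}$ assemble into $2r^{-3}\int\tfrac{1}{x}(\nabla\psi\cdot\nu-2\psi/r)^{2}$; and the residual $F(\psi)$ and $\psi f(\psi)$ terms bundle precisely into $-r^{-4}K_{2}(r)-r^{-4}\int x\psi f(\psi)$. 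The main obstacle is this delicate coefficient bookkeeping together with the axis-related admissibility of $\eta_{\varepsilon}$ and integrability of $1/x$-weighted quantities; in particular, the emergence of $-8=2\cdot(-4)$ as the coefficient of $\tfrac{\psi\nabla\psi\cdot\nu}{x}$ is precisely what forces the weight $2$ on $\mathscr{D}_{2}$ in the definition of $\mathscr{D}^{x}_{X_{0},\psi}$ and matches the homogeneity degree $2$ expected of the blow-up limit.
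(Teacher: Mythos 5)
Your proposal is correct and is exactly the argument the paper intends: the paper omits the proof, stating only that it is a simple modification of Lemma \ref{lemma:stagnationMonotonicity}, and your write-up supplies that modification with the right adjustments (homogeneity $2$, half-ball, admissibility of $\eta_{\varepsilon}$ since $\eta_{\varepsilon,1}=\zeta_{\varepsilon}x$ vanishes on the axis, disappearance of the $(x-x_{0})$-corrections because $x_{0}=0$, and the divergences $\operatorname{div}(xy(X-X_{0}))=4xy-xy_{0}$, $\operatorname{div}(x(X-X_{0}))=3x$). The coefficient bookkeeping you describe checks out and reproduces \eqref{MFthree} term by term.
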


The proof follows by a simple modification of Lemma \ref{lemma:stagnationMonotonicity} and is therefore omitted here.

We shall show  some elementary properties of the blow-up sequence $\{\psi_{m}\}$ and the weighted density $\mathscr{D}_{X_{0},\psi}^{x}(r)$ as $r\rightarrow0+$ in the next lemma. Recall that $$\psi_{m}(X):=\frac{\psi(X_{0}+r_{m}X)}{r_{m}^{2}}\quad\text{ for any }X_{0}\in S_{\psi}^{a}.$$

\begin{lemma}\label{lemma:degenerate}	Suppose that the  Assumption \ref{assumption1} holds, then the following statements hold.
	
	(\romannumeral1) The limit $\mathscr{D}_{X_{0},\psi}^{x}(0+)=\lim\limits_{r\rightarrow 0+}\mathscr{D}_{X_{0},\psi}^{x}(r)$ exists and is finite.
	
	(\romannumeral2) If $\psi_{m}$ converges to $\psi_{0}$ weakly in $W_{w,loc}^{1,2}(\mathbb{R}^{2}_{+})$ as $m\rightarrow+\infty$, then $\psi_{0}(r X)=r^{2}\psi_{0}(X)$ for each $X\in\mathbb{R}^{2}_{+}$ and $r>0$. Moreover, $\psi_{m}\rightarrow\psi_{0}$ strongly in $W_{w,loc}^{1,2}(\mathbb{R}^{2}_{+})$ as $m\rightarrow+\infty$.
	
	(\romannumeral3) $\mathscr{D}_{X_{0},\psi}^{x}(0+)=-y_{0}\lim\limits_{m\rightarrow \infty}\int_{B_{1}^{+}}x I_{\{\psi_{m}>0\}}\,dX$.
\end{lemma}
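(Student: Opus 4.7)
The plan is to mirror the proof of Lemma \ref{lemma:stagnations} step by step, substituting the new monotonicity formula \eqref{MFthree}, the scaling factor $r_m^2$ and homogeneity degree $2$, and the weighted Sobolev space $W^{1,2}_{w,loc}(\mathbb{R}^{2}_{+})$ (the weight is now essential because $X_0=(0,y_0)$ sits on the symmetry axis). For part (i), the three non-sign-definite terms on the right of \eqref{MFthree}, namely
\begin{equation*}
-r^{-4}\int_{B_r^+(X_0)}(y-y_0)\,x\,I_{\{\psi>0\}}\,dX,\qquad r^{-4}K_2(r),\qquad -r^{-4}\int_{B_r^+(X_0)}x\psi f(\psi)\,dX,
\end{equation*}
are each integrable in $r$ on $(0,r_0)$ thanks to Assumption \ref{assumption1}, the continuity of $f$, and the pointwise bound $\psi^{2}\leq Cx^{2}(|y-y_0|+x)|X-X_0|^{2}$ inherited from it. Since the first summand of \eqref{MFthree} is non-negative, the same contradiction argument as in Lemma \ref{lemma:stagnations} (i) --- assume two distinct subsequential limits separated by $\delta>0$ and bound the difference below --- yields existence and finiteness of $\mathscr{D}^{x}_{X_{0},\psi}(0+)$.

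For part (ii), I would integrate \eqref{MFthree} over $(r_m r_1,r_m r_2)$ with $0<r_1<r_2$, perform the change of variables $X\mapsto X_0+r_m X$ (using $\nabla\psi=r_m\nabla\psi_m$, $dX=r_m^{2}\,dX$, $d\mathcal{H}^{1}=r_m\,d\mathcal{H}^{1}$), and observe that the principal boundary contribution reduces to a multiple of
\begin{equation*}
\int_{B_{r_2}^{+}(O)\setminus B_{r_1}^{+}(O)}|X|^{-5}\,\frac{1}{x}\bigl(\nabla\psi_m\cdot X-2\psi_m\bigr)^{2}\,dX,
\end{equation*}
while the difference $\mathscr{D}^{x}_{X_{0},\psi}(r_m r_2)-\mathscr{D}^{x}_{X_{0},\psi}(r_m r_1)$ and the rescaled error integrals all vanish as $m\to\infty$ by part (i). Weak convergence $\psi_m\rightharpoonup\psi_0$ in $W^{1,2}_{w,loc}(\mathbb{R}^{2}_{+})$ combined with weighted lower semicontinuity forces $\nabla\psi_0\cdot X-2\psi_0=0$ a.e., giving $2$-homogeneity. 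Strong convergence follows from the rescaled equation $\operatorname{div}(\frac{1}{x}\nabla\psi_m)=-r_m^{2}xf(r_m^{2}\psi_m)$ in $\{\psi_m>0\}$, whose right-hand side vanishes in the limit, so $\operatorname{div}(\frac{1}{x}\nabla\psi_0)=0$ in $\{\psi_0>0\}$. Testing against $\psi_m\eta$ with $\eta\in C_0^{1}(\mathbb{R}^{2}_{+})$ and integrating by parts as in the derivation of \eqref{MFstepthree} produces the weighted energy inequality
$\limsup_m \int\frac{1}{x}|\nabla\psi_m|^{2}\eta\,dX\leq\int\frac{1}{x}|\nabla\psi_0|^{2}\eta\,dX$,
which upgrades weak convergence to strong convergence in $W^{1,2}_{w,loc}(\mathbb{R}^{2}_{+})$ via the weighted analog of Proposition 3.32 in \cite{Brezis2010}.

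For part (iii), the change of variables yields
\begin{equation*}
\mathscr{D}^{x}_{X_{0},\psi}(r_m)=\int_{B_1^{+}}\tfrac{1}{x}|\nabla\psi_m|^{2}\,dX-\int_{B_1^{+}}x(y_0+r_m y)I_{\{\psi_m>0\}}\,dX-r_m^{2}\int_{B_1^{+}}x\psi_m f(r_m^{2}\psi_m)\,dX-2\int_{\partial B_1^{+}}\tfrac{1}{x}\psi_m^{2}\,d\mathcal{H}^{1}.
\end{equation*}
The $r_m^{2}$ term is $o(1)$, and by (ii) the first and last terms pass to their $\psi_0$-analogues while the second tends to $y_0\int_{B_1^+}x I_0\,dX$ (after extracting an $L^{1}_{loc}$-convergent subsequence of $I_{\{\psi_m>0\}}$). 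Multiplying $\operatorname{div}(\frac{1}{x}\nabla\psi_0)=0$ by $\psi_0$ and integrating by parts on $B_1^{+}$, using the Euler identity $\nabla\psi_0\cdot\nu=2\psi_0$ on $\partial B_1$ together with the axis decay from Definition 1.2 (i)(ii), gives the cancellation
\begin{equation*}
\int_{B_1^{+}}\tfrac{1}{x}|\nabla\psi_0|^{2}\,dX=2\int_{\partial B_1^{+}}\tfrac{1}{x}\psi_0^{2}\,d\mathcal{H}^{1},
\end{equation*}
whence $\mathscr{D}^{x}_{X_{0},\psi}(0+)=-y_0\lim_{m\to\infty}\int_{B_1^{+}}xI_{\{\psi_m>0\}}\,dX$, as desired. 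The main obstacle will be controlling everything uniformly up to the axis $\{x=0\}$: the weighted boundary integration by parts and the part of $\partial B_1^{+}$ sitting on the axis both require the decay furnished by Definition 1.2 (ii) and Assumption \ref{assumption1} to conclude that the axis contributions are negligible, a step that is genuinely more delicate than in the Type 1 setting.
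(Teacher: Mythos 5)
Your proposal is correct and follows essentially the same route as the paper, which itself proves this lemma by transplanting the argument of Lemma \ref{lemma:stagnations} with the monotonicity formula \eqref{MFthree}, the scaling $r_m^{2}$, homogeneity degree $2$, and the key computation $\int_{\mathbb{R}^2_+}\frac{1}{x}|\nabla\psi_m|^2\eta\,dX\to\int_{\mathbb{R}^2_+}\frac{1}{x}|\nabla\psi_0|^2\eta\,dX$ obtained by testing $\operatorname{div}\left(\frac{1}{x}\nabla\psi_m\right)=-r_m^{2}xf(r_m^{2}\psi_m)$ against $\psi_m\eta$. Your additional attention to the axis contributions in the weighted integration by parts is a reasonable elaboration of the same argument rather than a different method.
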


\begin{proof}
	This results of (\romannumeral1) and (\romannumeral3) follow from (\romannumeral1) and (\romannumeral3) in Lemma \ref{lemma:stagnations} in the similar way, and the proof are therefore omitted.
	
	By the similar argument in  (\romannumeral2) in Lemma \ref{lemma:stagnations},
	we can prove  (\romannumeral2) in this lemma with some minor changes.
	
	In fact, we observe that $\operatorname{div}\left(\frac{1}{x}\nabla\psi_{m}\right)=-r_{m}^{2}xf(r_{m}^{2}\psi_{m})$ in $\{\psi_{m}>0\}$, which implies
	\begin{align*}
	\operatorname{div}\left(\frac{1}{x}\nabla\psi_{0}\right)=0\quad \text{ in }\{\psi_{0}>0\}.
	\end{align*}
	To show that $\limsup\limits_{m\rightarrow\infty}\parallel\nabla\psi_{m}\parallel_{L^{2}(U)}\leq\parallel\nabla\psi_{0}\parallel_{L^{2}(U)}$, where $U$ is any compact subset of $\mathbb{R}^{2}_{+}$,  we  compute that
	\begin{align*}
	\int_{\mathbb{R}_{+}^{2}} \frac{1}{x}|\nabla \psi_{m}|^{2} \eta \,dX&=-	\int_{\mathbb{R}_{+}^{2}}\psi_{m}\operatorname{div}\left(\frac{1}{x}\nabla\psi_{m}\eta\right) \,dX\\
	&=-\int_{\mathbb{R}_{+}^{2}}\psi_{m}\operatorname{div}\left(\frac{1}{x}\nabla\psi_{m}\right)\eta \,dX-\int_{\mathbb{R}_{+}^{2}} \psi_{m} \frac{1}{x} \nabla \psi_{m} \cdot \nabla \eta \,dX\\
	&=\int_{\mathbb{R}_{+}^{2}}\psi_{m}r_{m}^{2}xf(r_{m}^{2}\psi_{m})\eta \,dX-\int_{\mathbb{R}_{+}^{2}} \psi_{m} \frac{1}{x} \nabla \psi_{m} \cdot \nabla \eta \,dX\\
	&\rightarrow-\int_{\mathbb{R}_{+}^{2}} \psi_{0} \frac{1}{x} \nabla \psi_{0} \cdot \nabla \eta \,dX=\int_{\mathbb{R}_{+}^{2}} \frac{1}{x}|\nabla \psi_{0}|^{2} \eta \,dX,
	\end{align*}
	for any $\eta\in C^{1}_{0}(\mathbb{R}^{2}_{+})$.
\end{proof}

In the following, we will compute the possible explicit form of the blow-up limit $\psi_{0}(x,y)$. It should be noted that the main difficulty is that $\psi_{0}$ is not a harmonic function in $\{\psi_{0}>0\}$. Thus we want to introduce another function $\varphi_{0}$.

Notice that
\begin{equation*}
	\operatorname{div}\left(\frac{1}{x}\nabla \psi_{0}\right)=0\text{ in }\{\psi_{0}>0\}.
\end{equation*}
Hence, there exists a  function $\varPhi_{0}(x,y)$ in $\{\psi_{0}>0\}$ such that
\begin{equation*}
	\frac{\partial \varPhi_{0}}{\partial x}=-\frac{1}{x}\frac{\partial\psi_{0}}{\partial y}\quad\text{ and }\quad \frac{\partial \varPhi_{0}}{\partial y}=\frac{1}{x}\frac{\partial\psi_{0}}{\partial x}.
\end{equation*}
Let $\varphi_{0}(x_1,x_2,x_3)=\varPhi_{0}\left(\sqrt{x_1^2+x_2^2},x_3\right)$. A direct calculation shows that in $\{\psi_{0}>0\}$,
\begin{equation}\label{equation:pro3.3.1}
	\begin{aligned}
			\Delta_{(x,y,\gamma)}\varphi_{0}&=\frac{1}{x}\frac{\partial}{\partial x}\left(x\frac{\partial\varPhi_{0}}{\partial x}\right)+\frac{\partial^{2}\varPhi_{0}}{\partial y^{2}}\\
		&=\frac{1}{x}\frac{\partial}{\partial x}\left(-\frac{\partial\psi_{0}}{\partial y}\right)+\frac{\partial}{\partial y}\left(\frac{1}{x}\frac{\partial\psi_{0}}{\partial y}\right)\\
		&=0,
	\end{aligned}
\end{equation}
where $x=\sqrt{x_{1}^{2}+x_{2}^{2}},y=x_{3}$, $\gamma=\arctan\frac{x_{2}}{x_{1}}$, and we have used the fact that $\varphi_{0}$ is independent of $\gamma$. This implies that $\varphi_{0}(x_{1},x_{2},x_{3})$ is a harmonic function in $\{\psi_{0}>0\}$, which gives that
\begin{equation}\label{equation24}
	\Delta_{(r,\theta,\gamma)}\varphi_{0}=0\quad\text{ in }\{\psi_{0}>0\},
\end{equation}
where $r=\sqrt{x_{1}^{2}+x_{2}^{2}+x_{3}^{2}}$ and $\theta=\arctan\frac{\sqrt{x_{1}^{2}+x_{2}^{2}}}{x_{3}}$.

In addition, $\psi_{0}$ is a homogeneous function of degree $2$ yields that $\varphi_{0}$ is a homogeneous function of degree $1$. Combing with the fact that $\varphi_{0}$ is independent of $\gamma$,  we could rewrite  $\varphi_{0}(x_{1},x_{2},x_{3})$ as
\begin{equation}\label{equation23}
	\varphi_{0}(r,\theta)=rf(\cos\theta).
\end{equation}

Inserting \eqref{equation23} into \eqref{equation24}, we compute that
\begin{equation*}
	\begin{aligned}
		\Delta_{(r,\theta,\gamma)}\varphi_{0}&=\frac{1}{r^{2}}\frac{\partial}{\partial r}\left(r^{2}\frac{\partial\varphi_{0}}{\partial r}\right)+\frac{1}{r^{2}\sin\theta}\frac{\partial}{\partial\theta}\left(\sin\theta\frac{\partial\varphi_{0}}{\partial\theta}\right)+\frac{1}{r^{2}\sin^{2}\theta}\frac{\partial^{2}\varphi_{0}}{\partial\gamma^{2}}\\
		&=\frac{1}{r^{2}}\frac{\partial }{\partial r}(r^{2}f(\cos\theta))+\frac{1}{r^{2}\sin\theta}\frac{\partial}{\partial\theta}(r\sin\theta f'(\cos\theta)(-\sin\theta))\\&=0,
	\end{aligned}
\end{equation*}
from which we get that
\begin{equation}\label{equation:pro3.3.2}
	(1-z^{2})f''(z)-2zf'(z)+2f(z)=0\quad\text{ in }\{\psi_{0}>0\},
\end{equation}
where $z=\cos\theta$.

\begin{proposition}
	Assume that  $\psi$ is a weak solution satisfying Assumption 1.1, then the possible blow-up limits and the corresponding weighted densities are:
	
	either $$\psi_{0}(X)=Cx^{2}\quad\text{ with }C>0$$   and $$\mathscr{D}_{X_{0},\psi}^{x}(0+)=-y_{0}\int_{B_{1}^{+}}x\,dX;$$
	
	or $$\psi_{0}=0$$ and $$\mathscr{D}_{X_{0},\psi}^{x}(0+)\in\left\{0,-y_{0}\int_{B_{1}^{+}}x\,dX\right\}.$$
\end{proposition}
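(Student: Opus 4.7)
The plan is to follow the template of Proposition \ref{pro:2.3}, adapted to the Type 2 setting. By Lemma \ref{lemma:degenerate}, the blow-up sequence $\psi_m(X) = \psi(X_0 + r_m X)/r_m^2$ converges strongly in $W^{1,2}_{w,loc}(\mathbb{R}^2_+)$ to a limit $\psi_0$ that is homogeneous of degree $2$; up to a subsequence, BV-compactness (guaranteed near $X_0$ under Assumption \ref{assumption:2}) gives $I_{\{\psi_m>0\}} \to I_0$ in $L^1_{loc}$ with $I_0\in\{0,1\}$ a.e.\ and $I_0 = 1$ on $\{\psi_0>0\}$. Passing $m\to\infty$ in $\operatorname{div}(\tfrac{1}{x}\nabla\psi_m) = -r_m^2 x f(r_m^2\psi_m)$ yields $\operatorname{div}(\tfrac{1}{x}\nabla\psi_0) = 0$ in $\{\psi_0>0\}$, and the axis condition $\psi_m = 0$ on $\{x=0\}$ transfers to $\psi_0 = 0$ on $\{x = 0\}$.

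The excerpt preceding the proposition already reduces the interior equation to the Legendre ODE \eqref{equation:pro3.3.2} for the angular part $f$ of the 3D harmonic extension $\varphi_0(r,\theta) = rf(\cos\theta)$. Equivalently, writing $\psi_0(r,\theta) = r^2 H(\cos\theta)$ from the homogeneity and plugging into the divergence-form equation, one obtains the ODE $(1-z^2)H''(z) + 2H(z) = 0$ on $(-1,1)$. A fundamental solution is the explicit $H_1(z) = 1 - z^2$, corresponding to $\psi_0 = Cx^2$, and reduction of order produces the second independent solution $H_2(z) = \tfrac{z}{2} + \tfrac{1-z^2}{4}\log\tfrac{1+z}{1-z}$, which is odd, satisfies $H_2(\pm 1) = \pm\tfrac12 \neq 0$, and whose Wronskian with $H_1$ is constantly $1$.

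Using this basis, I would classify all possible cones $\{\psi_0 > 0\}$. If a connected component's closure touches the axis, then $H(1) = 0$ or $H(-1) = 0$, and since $H_2(\pm 1) \neq 0$ the $H_2$-coefficient must vanish; the resulting $H = C H_1$ is positive throughout $(-1,1)$, forcing $C > 0$ and extending the component to all of $\{x > 0\}$, with $\psi_0 = Cx^2$. To exclude a \emph{floating} sector $\{\theta_1 < \theta < \theta_2\} \subset \{0 < \theta < \pi\}$ detached from the axis, the two boundary zeros $H(\cos\theta_i) = 0$ force $\phi(\theta_1) = \phi(\theta_2)$ for $\phi(\theta) := H_2(\cos\theta)/\sin^2\theta$; a direct calculation using the constant Wronskian gives $\phi'(\theta) = -1/\sin^3\theta < 0$ on $(0,\pi)$, so $\phi$ is strictly decreasing and $\theta_1 = \theta_2$, a contradiction.

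Finally, the density is read off from Lemma \ref{lemma:degenerate}\,(iii). If $\psi_0 = Cx^2$, then $I_0 \equiv 1$ on $B_1^+$ and $\mathscr{D}_{X_0,\psi}^{x}(0+) = -y_0\int_{B_1^+} x\,dX$. If $\psi_0 \equiv 0$, I pass to the limit in \eqref{equation:variationalSolution} against a rescaled test field $\eta_m(X) = \eta^*((X-X_0)/r_m)$ (after dividing through by the common factor $r_m^2$ that arises because $X_0$ lies on the axis) to obtain $-y_0 \int \operatorname{div}_Z(z_1\eta^*)\, I_0\, dZ = 0$ for all admissible $\eta^*$; integrating by parts, $z_1\nabla I_0 = 0$ distributionally on $\mathbb{R}^2$, so $I_0$ is constant on the connected set $\{z_1 > 0\}$ and belongs to $\{0, 1\}$, giving the two listed densities. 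The main obstacle I anticipate is the sector-exclusion step: unlike the Type 1 argument, here I must produce the singular second solution $H_2$ explicitly and extract the sharp monotonicity $\phi'(\theta) = -\csc^3\theta$, since without it one could not rule out wedge-shaped blow-up profiles detached from the axis.
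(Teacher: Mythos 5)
Your argument is correct, and its skeleton (blow-up convergence via Lemma \ref{lemma:degenerate}, BV-compactness of $I_{\{\psi_m>0\}}$, and reading off the density from the limiting domain-variation identity) coincides with the paper's; where you genuinely diverge is the classification of the homogeneous limit. The paper passes to the conjugate potential $\varphi_0$ (a degree-one harmonic function), reduces to the Legendre equation \eqref{equation:pro3.3.2} with basis $P_1(z)=z$ and $Q_1(z)=\frac{z}{2}\log\frac{1+z}{1-z}-1$, kills the $Q_1$-component on axis-touching cones via the energy bound $\int_V x|\nabla\varphi_0|^2<\infty$ together with the blow-up of $Q_1'$ at $z=\pm1$, and excludes floating sectors through the Neumann-type conditions $f'(z_i)=0$ combined with the strict convexity $Q_1''>0$. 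You instead work directly with the angular part $H$ of $\psi_0$, obtaining $(1-z^2)H''+2H=0$ (which is indeed the correct reduction of $\operatorname{div}(\frac1x\nabla\psi_0)=0$ for a degree-$2$ homogeneous function in the paper's convention $x=r\sin\theta$), with the bounded basis $H_1=1-z^2$, $H_2=\frac{z}{2}+\frac{1-z^2}{4}\log\frac{1+z}{1-z}$ and Wronskian $1$; the axis condition becomes the Dirichlet condition $H(\pm1)=0$, which kills $H_2$ since $H_2(\pm1)=\pm\frac12$, and floating sectors are excluded because $H(z_1)=H(z_2)=0$ at two interior points would force $H_2/H_1$ to repeat a value, contradicting $(H_2/H_1)'=W/H_1^2=(1-z^2)^{-2}>0$, which is exactly your $\phi'(\theta)=-\csc^3\theta$. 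Your route is somewhat more elementary: no conjugate function, no Neumann conditions on the sector boundary, and the relevant basis is continuous up to $z=\pm1$ so the finiteness-of-energy argument is not needed; the cost is having to produce $H_2$ explicitly, and the paper's route has the advantage of reusing the identical machinery for the Type 3 point, where the Legendre functions $P_{3/2}(\pm z)$ are unavoidable. One small bookkeeping remark: the local BV regularity of $I_{\{\psi>0\}}$ is drawn in the paper from the weak-solution hypothesis (local $C^{2,\alpha}$ smoothness of the free boundary off the axis and off $\{y=0\}$), not from Assumption \ref{assumption:2} as you state, but this does not affect the argument.
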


\begin{proof}
  It is shown that the function $\varphi_{0}$ satisfies the equation \eqref{equation:pro3.3.2} and
it follows from Section 7.3 in \cite{Lebedev} that the equation \eqref{equation:pro3.3.2} has two linearly independent solutions $P_{1}(z)$ and $Q_{1}(z)$ on $(-1,1)$, where
\begin{equation*}
	P_{1}(z)=z\quad\text{ and }\quad Q_{1}(z)=\frac{z}{2}\log\left(\frac{1+z}{1-z}\right)-1,\quad z\in(-1,1)
\end{equation*}  are called the Legendre function of the first kind and the Legendre function of the second kind respectively. Thus one has that the solutions of the equation \eqref{equation:pro3.3.2} are linear combinations of $P_{1}(z)$ and $Q_{1}(z)$ on $(-1,1)$, that is,
\begin{equation*}
	f(z)=aP_{1}(z)+bQ_{1}(z),
\end{equation*}
where some $a,b\in\mathbb{R}$.

Assume that $\{\psi_{0}>0\}$ is not-empty. Taking any connected component $V$ of $\{\psi_{0}>0\}$, let $\theta_{1}$ and $\theta_{2}$ with $\theta_{i}\in [0,\pi]$ for $i=1,2$ and  $\theta_{1}<\theta_{2}$ be such that
\begin{equation*}
	\begin{cases}
		\psi_{0}(r\sin\theta,r\cos\theta)>0&\text{ for }\theta\in(\theta_{1},\theta_{2}),\\
		\psi_{0}(r\sin\theta_{i},r\cos\theta_{i})=0&\text{ for }i=1,2.
	\end{cases}
\end{equation*}

Now, we claim that $b=0$. We consider two cases respectively.

\textbf{ Case 1. } $\theta_{1}=0$ or $\theta_{2}=\pi$. By virtue of the fact that
\begin{equation}\label{equation:pro3.3.3}
	\int_{V}x\lvert\nabla\varphi_{0}\rvert^{2}\,dX=\int_{V}\frac{1}{x}\lvert\nabla\psi_{0}\rvert^{2}\,dX<+\infty,
\end{equation}
one has that $b=0$. Otherwise, we check at once that
\begin{equation*}
	Q_{1}'(z)=\frac{1}{2}\log\left(\frac{1+z}{1-z}\right)+\frac{z}{(1+z)(1-z)},
\end{equation*}
which yields that
\begin{equation*}
	Q_{1}'(z)\rightarrow+\infty\quad\text{ as }z\rightarrow1\text{ or }z\rightarrow-1,
\end{equation*}
a contradiction to \eqref{equation:pro3.3.3}.

\textbf{ Case 2.} $0<\theta_{1}<\theta_{2}<\pi$. It is easily seen that $\nabla\varphi_{0}\cdot\nabla \psi_{0}=0$,
which implies that \begin{equation*}
	\nabla_{(\theta,r)}\psi_{0}\cdot\nabla_{(\theta,r)}\varphi_{0}=0\text{ when }r=1,
\end{equation*}
where $\nabla_{(r,\theta)}$ is the gradient in polar coordinates.
Since $\psi_0$ is a homogeneous function, one has
\begin{equation*}
	\lim\limits_{\theta^{*}\rightarrow\theta_{i}}\frac{\partial \varphi_{0}}{\partial\theta}\bigg|_{r=1,\theta=\theta^{*}}=	\lim\limits_{\theta^{*}\rightarrow\theta_{i}}\frac{1}{r\sin\theta}\frac{\partial \psi_{0}}{\partial r}\bigg|_{r=1,\theta=\theta^{*}}=0\quad\text{ for }i=1,2.
\end{equation*}
This gives that
\begin{equation*}
	f'(z_{i})=0,\quad\text{ where }z_{i}=\cos\theta_{i}\quad\text{ for }i=1,2.
\end{equation*}

Suppose that $b\neq 0$, then
\begin{equation*}
	\begin{cases}
		aP_{1}'(z_{1})+bQ_{1}'(z_{1})=0,\\
		aP_{1}'(z_{2})+bQ_{1}'(z_{2})=0,
	\end{cases}
\end{equation*}
which yields that
\begin{equation*}
	b(Q_{1}'(z_{1})-Q_{1}'(z_{2}))=0.
\end{equation*}
We get that $Q_{1}'(z_{1})= Q_{1}'(z_{2})$. However, $Q_{1}(z)$ is a strictly convex function since $Q_{1}''(z)=\frac{2}{(1-z^2)^2}>0$ when $z\in(-1,1)$. Thus it leads a contradiction.

Therefore,  we have shown the claim that $b=0$. In fact, in Case 2, we deduce that $a=b=0$. This yields that in any connected component $V$ of $\{\psi_{0}>0\}$,
\begin{equation*}
	\varphi_{0}(x,y)=ar\cos\theta=ay\quad\text{ and }\quad\text{ either }\theta_{1}=0\text{ or }\theta_{2}=\pi.
\end{equation*}
A direct calculation shows that in $\{\psi_{0}>0\}$,
\begin{equation*}
	\psi_{0}=ax^{2}\quad\text{ and  }\quad\theta_{1}=0,\theta_{2}=\pi,
\end{equation*}
where $a>0$.

Then preceding as in the proof of Proposition \ref{pro:2.3}, for any $\eta=(\eta_{1},\eta_{2})\in C_{0}^{1}(\mathbb{R}^{2},\mathbb{R}^{2})$ with $\eta_{1}=0$ on $\{x=0\}$, one has
\begin{equation}\label{equation:pro3.3.4}
	\begin{aligned}
		0=&\int_{\mathbb{R}^{2}_{+}}\left(\frac{1}{x}\left\lvert\nabla\psi_{0}\right\rvert^{2}\nabla\cdot\eta-\frac{2}{x}\nabla\psi_{0}D\eta\cdot\nabla\psi_{0}-\frac{1}{x^{2}}\left\lvert\nabla\psi_{0}\right\rvert^{2}\eta_{1}\right)\,dX\\&-y_{0}\int_{\mathbb{R}^{2}_{+}}(xI_{0}\operatorname{div}\eta+I_{0}\eta_{1})\,dX,
	\end{aligned}
\end{equation}
where $I_{0}$ is the strong $L_{loc}^{1}$-limit of $I_{\{\psi_{m}>0\}}$ along a subsequence. Similarly, $I_{0}=1$ in $\{\psi_{0}>0\}$.

Hence, we obtain the following two cases.

\textbf{ Case 1. } $\{\psi_{0}>0\}$ is not an empty set. Then $\psi_{0}(x,y)=ax^{2}$ with $a>0$. Recalling $(\romannumeral3)$ in Lemma \ref{lemma:degenerate}, the corresponding density is
\begin{equation*}
	\mathscr{D}_{X_{0},\psi}^{x}(0+)=-y_{0}\int_{B_{1}^{+}}x\,dX.
\end{equation*}

\textbf{ Case 2.} $\psi_{0}(x,y)\equiv 0$ in $\mathbb{R}^{2}_{+}$. Then the equation \eqref{equation:pro3.3.4} implies that
\begin{equation*}
	0=\int_{\mathbb{R}^{2}_{+}}(xI_{0}\operatorname{div}\eta+I_{0}\eta_{1})\,dX,
\end{equation*}
which yields that $I_{0}$ is a constant in $\mathbb{R}_{+}^{2}$ and its value is either $1$ or $0$.

Thus, the corresponding density has two possible cases, that is,
\begin{equation*}
	\mathscr{D}_{X_{0},\psi}^{x}(0+)\in \left\{-y_{0}\int_{B_{1}^{+}}x\,dX,0\right\}.
\end{equation*}
\end{proof}

Now we shall prove Theorem \ref{theorem:2}.
\begin{proof}
	We follow the notation used in the proof of Theorem \ref{theorem:1}.
	
	First, we claim that $A_{\pm}\subset\{0,\pi\}$. Suppose not, then we could find a sequence $t_{m}\rightarrow0$ as $m\rightarrow+\infty$, such that
	\begin{equation*}
		-\arg(g(t_{m})-g(t_{0}))+\frac{\pi}{2}\rightarrow\theta_{0}\quad\text{ and }\quad\theta_{0}\in A_{\pm}\backslash\{0\}.
	\end{equation*}

Similarly, set $r_{m}:=\lvert g(t_{m})-g(t_{0})\rvert$ and $\psi_{m}(X):=\frac{\psi(X_{0}+r_{m}X)}{r_{m}^{2}}$. For each $r_{0}>0$, choosing a ball $B_{r_{0}}:=B_{r_{0}}(\sin\theta_{0},\cos\theta_{0})$, such that
\begin{equation*}
	B_{r_{0}}\cap\{x=0\}=\emptyset,
\end{equation*}
one has
\begin{equation}\label{equation:thm1.1.1}
	\operatorname{div}\left(\frac{1}{x}\nabla\psi_{m}\right)\left(B_{r_{0}}\right)\rightarrow \operatorname{div}\left(\frac{1}{x}\nabla\psi_{0}\right)\left(B_{r_{0}}\right)=0,
\end{equation}
where we have used the fact that $\psi_{m}\rightarrow\psi_{0}$ strongly in $W^{1,2}_{w,loc}(\mathbb{R}^{2}_{+})$.

Besides, the facts that
\begin{equation*}
	\operatorname{div}\left(\frac{1}{x}\nabla\psi_{m}\right)(B_{r_{0}})=\int_{B_{r_{0}}^{+}(X_{0})\cap\{\psi_{m}>0\}}-r_{m}^{3/2}xf(r_{m}^{5/2}\psi_{m})\,dX+
	\int_{B_{r_{0}}^{+}(X_{0})\cap\partial\{\psi_{m}>0\}}\sqrt{-y}\,d\mathcal{H}^{1}
\end{equation*}
and that the curve length of  $B_{r_{0}}\cap\partial\{\psi_{m}>0\}$  is at least $2\rho-o(1)$ imply that
\begin{equation*}
	\operatorname{div}\left(\frac{1}{x}\nabla\psi_{m}\right)(B_{r_{0}})\geq-cr_{m}^{3/2}+\int_{B_{r_{0}}^{+}(X_{0})\cap\partial\{\psi_{m}>0\}}\sqrt{-y}\,d\mathcal{H}^{1}\geq c(\theta_{0},\rho)-cr_{m}^{3/2}>0.
\end{equation*}
Thus we obtain a contradiction to \eqref{equation:thm1.1.1}.

Next, we get that $A_{+}$ and $A_{-}$ are connected sets. And we define
\begin{equation*}
	a_{+}:=-\lim\limits_{t_{m}\rightarrow0+}\arg(g(t_{m})-g(t_{0}))+\frac{\pi}{2}\text{ and } a_{-}:=-\lim\limits_{t_{m}\rightarrow0-}\arg(g(t_{m})-g(t_{0}))+\frac{\pi}{2}.
\end{equation*}

Finally, by virtue of $A_{\pm}\subset\left\{0,\pi\right\}$, there are three possible cases.

\textbf{ Case 1.} Without loss of generality, there exists only $a_{+}$. Hence \begin{equation*}
	a_{+}=0\quad\text{ or }\quad a_{+}=\pi,
\end{equation*}
that is,
\begin{equation*}
	\lim\limits_{t\rightarrow0+}\frac{g_{1}(t)}{g_{2}(t)-y_{0}}=0,
\end{equation*}
and the corresponding density is $-y_{0}\int_{B_{1}^{+}}x\,dX$.

\textbf{ Case 2.} $a_{+}=0$ and $a_{-}=\pi$ (or conversely, $a_{+}=\pi$ and $a_{-}=0$). More precisely,
\begin{equation*}
	\lim\limits_{t\rightarrow0}\frac{g_{1}(t)}{g_{2}(t)-y_{0}}=0,
\end{equation*}
and the corresponding density is $-y_{0}\int_{B_{1}^{+}}x\,dX$.

\textbf{ Case 3.} $a_{+}=a_{-}=0$ or $a_{+}=a_{-}=\pi$. In this case, $g_{1}(t)-x_{0}$ does not change its sign at $t=0$ and the corresponding density is $0$.
\end{proof}

\section{The singularity near the Type 3 degenerate point}

In this section, we would like to  show the possible profile of the free boundary near the Type 3 degenerate point, namely, the original point. For this purpose, we take $\psi$ be a variational solution of the problem \eqref{equation}   and $X_0=(0,0)$ in this section unless otherwise stated.

Now, we derive the monotonicity formula at the original point. Note that if the first integral on the right-hand  side of \eqref{MFfour}  equals to zero, then $\psi$ is a homogeneous function of degree $\frac{5}{2}$.

Let  $r_{0}:=\operatorname{dist}(X_{0},\partial\Omega)/2$ in this section.
\begin{lemma}
	 For almost everywhere $r\in(0,r_{0})$, we have
	\begin{equation}\label{MFfour}
			\begin{aligned}
			\frac{d\mathscr{D}_{X_{0},\psi}^{xy}(r)}{dr}&=2 r^{-4} \int_{\partial B_{r}^{+}(X_{0})} \frac{1}{x}\left(\nabla \psi \cdot \nu-\frac{5}{2} \frac{\psi}{r}\right)^{2} d \mathcal{H}^{1}\\
			&-r^{-5}K_{2}(r)-r^{-5}\int_{ B_{r}^{+}(X_{0})}x\psi f(\psi)\,dX.
		\end{aligned}
	\end{equation}
\end{lemma}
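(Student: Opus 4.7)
The proof follows the template of Lemma 3.1 (Type 1) and Lemma 4.1 (Type 2), with several simplifications coming from $X_{0}=O$: there is no shift $X-X_{0}$ in the test function, which kills the cross-terms that appeared in Lemma 3.1. I would proceed in three steps.

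First, differentiate $\mathscr{D}^{xy}_{O,\psi}(r)=r^{-4}\mathscr{D}_{1,O,\psi}(r)-\tfrac{5}{2}r^{-5}\mathscr{D}_{2,O,\psi}(r)$ term by term: the co-area formula gives
\[
\frac{d}{dr}\bigl(r^{-4}\mathscr{D}_{1}\bigr) = -4r^{-5}\mathscr{D}_{1} + r^{-4}\int_{\partial B_{r}^{+}}\Bigl(\tfrac{|\nabla\psi|^{2}}{x} - xyI_{\{\psi>0\}} - x\psi f(\psi)\Bigr)\,d\mathcal{H}^{1},
\]
and a change of variables (as in the last display of the proof of Lemma 3.1) produces
\[
\frac{d}{dr}\bigl(-\tfrac{5}{2}r^{-5}\mathscr{D}_{2}\bigr) = \tfrac{25}{2}r^{-6}\int_{\partial B_{r}^{+}}\tfrac{\psi^{2}}{x}\,d\mathcal{H}^{1} - 5r^{-5}\int_{\partial B_{r}^{+}}\tfrac{\psi\nabla\psi\cdot\nu}{x}\,d\mathcal{H}^{1}.
\]
The correction term $r^{-5}\int_{\partial}\tfrac{x-x_{0}}{x^{2}}\psi^{2}$ appearing in Lemma 3.1 is absent here because $X_{0}=O$.

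Second, to remove the interior integral $\int_{B_{r}^{+}}xyI_{\{\psi>0\}}$, I would plug the admissible test function $\eta_{\varepsilon}(X):=\zeta_{\varepsilon}(|X|)X$ into the variational identity, with $\zeta_{\varepsilon}$ the cutoff from the proof of Lemma 3.1. Admissibility holds because $\eta_{\varepsilon,1}=\zeta_{\varepsilon}(|X|)x$ vanishes on $\{x=0\}$. A direct computation shows $\operatorname{div}(xy\eta_{\varepsilon})=4xy\zeta_{\varepsilon}+xy|X|\zeta_{\varepsilon}'$, with coefficient $4$ (instead of the $3$ of Lemma 3.1) because both components of $\eta_{\varepsilon}$ now carry factors of $x$ or $y$. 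Passing $\varepsilon\to 0$ and then applying the integration-by-parts formula $\int_{B_{r}^{+}}\tfrac{|\nabla\psi|^{2}}{x}\,dX=\int_{B_{r}^{+}}x\psi f(\psi)\,dX+\int_{\partial B_{r}^{+}}\tfrac{\psi\nabla\psi\cdot\nu}{x}\,d\mathcal{H}^{1}$ (the analogue of MFstepthree in Lemma 3.1) converts every interior $|\nabla\psi|^{2}/x$ integral into boundary contributions and a $\int x\psi f(\psi)$ term. Substituting back into the two derivative formulas from Step~1 and collecting coefficients, the boundary integrals of $(\nabla\psi\cdot\nu)^{2}/x$, $\psi\nabla\psi\cdot\nu/x$ and $\psi^{2}/x$ appear with weights $2r^{-4}$, $-10r^{-5}$ and $\tfrac{25}{2}r^{-6}$, which is exactly $2r^{-4}$ times the complete square $\tfrac{1}{x}(\nabla\psi\cdot\nu-\tfrac{5}{2}\psi/r)^{2}$. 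The remaining $F$-type and boundary $x\psi f$ contributions reassemble into $-r^{-5}K_{2}(r)-r^{-5}\int_{B_{r}^{+}}x\psi f(\psi)\,dX$.

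The main obstacle is twofold. First, the algebraic bookkeeping: one must verify that the coefficients of the three boundary integrals align to form the perfect square with the specific factor $\tfrac{5}{2}$, matching the expected homogeneity degree $\tfrac{5}{2}$ for Type~3. Second, the axis handling: because $X_{0}=O$ lies on $\{x=0\}$, one must confirm that no hidden boundary contribution arises on the diameter $\{x=0\}\cap\overline{B_{r}^{+}}$, either in the divergence identity (ensured by $\eta_{\varepsilon,1}|_{x=0}=0$) or in the integration by parts (ensured by $\psi|_{x=0}=0$ together with the weak-limit condition (ii) in the definition of variational solution). Once these vanishing statements are secured, the calculation is essentially a repetition of Lemma 3.1 with coefficients adjusted for the different homogeneity scaling.
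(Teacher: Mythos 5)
Your proposal is correct and follows exactly the route the paper intends: the paper omits this proof, stating only that it is ``similar to that of Lemma \ref{lemma:stagnationMonotonicity}'', and your computation supplies the missing details accurately --- in particular the coefficient $4$ in $\operatorname{div}(xy\eta_{\varepsilon})$, the resulting boundary weights $2r^{-4}$, $-10r^{-5}$, $\tfrac{25}{2}r^{-6}$ that assemble into the square with factor $\tfrac{5}{2}$, and the identification of the leftover terms with $-r^{-5}K_{2}(r)-r^{-5}\int_{B_{r}^{+}}x\psi f(\psi)\,dX$. Your attention to the absence of the $x-x_{0}$ correction terms and to the vanishing of contributions on the axis $\{x=0\}$ is exactly the right bookkeeping for the $X_{0}=O$ case.
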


The proof is similar to that of Lemma \ref{lemma:stagnationMonotonicity}, and so we omit it here.

We propose now to show the blow-up sequence $\left\{\psi_{m}\right\}$ and the weighted density $\mathscr{D}_{X_{0},\psi}^{xy}(r)$ as $r\rightarrow0+$. Recall that
$$\psi_{m}(X):=\frac{\psi(X_{0}+r_{m}X)}{r_{m}^{5/2}}\quad\text{ for }X_{0}=O.$$

\begin{lemma}\label{lemma:origin}
	If $\psi$ satisfies Assumption \ref{assumption1}, then,
	
	(\romannumeral1) The limit
	$\mathscr{D}_{X_{0},\psi}^{xy}(0+)=\lim\limits_{r\rightarrow 0+}\mathscr{D}_{X_{0},\psi}^{xy}(r)$ exists and is finite.
	
	(\romannumeral2) If $\psi_{m}(X)$ converges to $\psi_{0}$ weakly in $W_{w,loc}^{1,2}(\mathbb{R}^{2}_{+})$  as $m\rightarrow\infty$, then $\psi_{0}(r x)=r^{5/2}\psi_{0}(x)$ for each $x\in\mathbb{R}^{2}_{+}$ and $r>0$.  Moreover, $\psi_{m}\rightarrow\psi_{0}$ strongly in $W_{w,loc}^{1,2}(\mathbb{R}^{2}_{+})$ as $m\rightarrow\infty$.

	(\romannumeral3) $\mathscr{D}_{X_{0},\psi}^{xy}(0+)=\lim\limits_{m\rightarrow +\infty}-\int_{B_{1}^{+}}xy I_{\{\psi_{m}>0\}}\,dX.$
\end{lemma}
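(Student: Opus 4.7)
The proof mirrors Lemmas \ref{lemma:stagnations} and \ref{lemma:degenerate}, with scaling exponent $5/2$ dictated by the combined vanishing of $|\nabla\psi|$ on both the $x$-axis and the symmetry axis at the origin. For part (i), start from the monotonicity formula \eqref{MFfour}: its leading term is non-negative, while Assumption \ref{assumption1} (which, at $X_0 = O$, gives $\psi^2(X) \le C x^2 |X|^3$ nearby) together with continuity of $f$ ensure that $r^{-5}K_2(r)$ and $r^{-5}\int_{B_r^+(X_0)} x \psi f(\psi)\, dX$ are integrable on $(0, r_0)$. A contradiction argument identical to that of Lemma \ref{lemma:stagnations}(i) then rules out distinct subsequential limits of $\mathscr{D}^{xy}_{X_0,\psi}(r)$ and yields finiteness.

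For part (ii), integrate \eqref{MFfour} on $(r_m r_1, r_m r_2)$ with $0 < r_1 < r_2$ and rescale by $X \to X/r_m$; using $\psi(r_m X) = r_m^{5/2}\psi_m(X)$ and $(\nabla\psi)(r_m X) = r_m^{3/2}\nabla\psi_m(X)$, the rescaled leading term becomes a multiple of
\begin{equation*}
\int_{B_{r_2}^+ \setminus B_{r_1}^+}|X|^{-6}\frac{1}{x}\bigl(\nabla\psi_m(X)\cdot X - \tfrac{5}{2}\psi_m(X)\bigr)^2\, dX,
\end{equation*}
which vanishes as $m \to \infty$ since $\mathscr{D}^{xy}_{X_0,\psi}(r_m r_2) - \mathscr{D}^{xy}_{X_0,\psi}(r_m r_1) \to 0$ by (i) and the rescaled error integrals tend to zero by absolute continuity. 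Weak convergence in $W^{1,2}_{w,loc}(\mathbb{R}^2_+)$ then forces $\nabla\psi_0(X)\cdot X = \tfrac{5}{2}\psi_0(X)$ a.e., i.e., homogeneity of degree $5/2$. For strong convergence, $\psi_m$ satisfies $\operatorname{div}(\tfrac{1}{x}\nabla\psi_m) = -r_m^2 x f(r_m^{5/2}\psi_m)$ in $\{\psi_m > 0\}$, which passes to $\operatorname{div}(\tfrac{1}{x}\nabla\psi_0) = 0$ in $\{\psi_0 > 0\}$; testing against $\eta \in C^1_0(\mathbb{R}^2_+)$ and integrating by parts yields $\int \tfrac{1}{x}|\nabla\psi_m|^2 \eta\, dX \to \int \tfrac{1}{x}|\nabla\psi_0|^2 \eta\, dX$, which combined with weak convergence upgrades to strong convergence in $W^{1,2}_{w,loc}(\mathbb{R}^2_+)$ via Proposition 3.32 of \cite{Brezis2010}, exactly as in Lemma \ref{lemma:degenerate}(ii).

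For part (iii), substituting $X \to r_m X$ in the definition of $\mathscr{D}^{xy}_{X_0,\psi}$ gives
\begin{equation*}
\mathscr{D}^{xy}_{X_0,\psi}(r_m) = \int_{B_1^+}\frac{1}{x}|\nabla\psi_m|^2\, dX - \int_{B_1^+} xy\, I_{\{\psi_m > 0\}}\, dX - r_m^{3/2}\!\int_{B_1^+} x \psi_m f(r_m^{5/2}\psi_m)\, dX - \frac{5}{2}\int_{\partial B_1^+}\frac{1}{x}\psi_m^2\, d\mathcal{H}^1.
\end{equation*}
Passing to the limit (strong $W^{1,2}_{w,loc}$ convergence for the first term, $L^1_{loc}$ convergence of $I_{\{\psi_m > 0\}}$ for the second, vanishing of the third by continuity of $f$ and boundedness of $\psi_m$ on compacts, trace convergence for the fourth), the homogeneity of $\psi_0$ combined with $\operatorname{div}(\tfrac{1}{x}\nabla\psi_0) = 0$ in $\{\psi_0 > 0\}$ produces, via integration by parts using $\nabla\psi_0 \cdot \nu = \tfrac{5}{2}\psi_0$ on the spherical portion of $\partial B_1^+$ and $\psi_0 = 0$ on $\{x = 0\}$, the Pohozaev-type identity $\int_{B_1^+}\tfrac{1}{x}|\nabla\psi_0|^2\, dX = \tfrac{5}{2}\int_{\partial B_1^+}\tfrac{1}{x}\psi_0^2\, d\mathcal{H}^1$. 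The gradient and boundary terms cancel, leaving $\mathscr{D}^{xy}_{X_0,\psi}(0+) = -\lim_{m\to\infty}\int_{B_1^+} xy\, I_{\{\psi_m > 0\}}\, dX$.

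The principal technical difficulty, shared with Lemma \ref{lemma:degenerate}(ii), is the strong weighted convergence: the weight $\tfrac{1}{x}$ is singular on the axis $\{x = 0\}$, which now passes through the basepoint $O$ itself, so one must restrict test functions away from the axis and rely on the boundary condition $\psi_0 = 0$ on $\{x = 0\}$ to control the diameter portion of $\partial B_1^+$ when invoking the Pohozaev-type identity in part (iii).
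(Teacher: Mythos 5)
Your proposal is correct and follows essentially the same route as the paper, which itself reduces the proof to the arguments of Lemma \ref{lemma:stagnations} and Lemma \ref{lemma:degenerate} after recording the scaled equation and the convergence $\int \tfrac{1}{x}|\nabla\psi_m|^2\eta\,dX \to \int \tfrac{1}{x}|\nabla\psi_0|^2\eta\,dX$; your filled-in details for (i)--(iii) (integrability of the error terms, the rescaled squared term with weight $|X|^{-6}$, and the Pohozaev-type identity from $\tfrac{5}{2}$-homogeneity) match what the paper intends. The only slip is the exponent in the scaled equation, which should read $\operatorname{div}\left(\tfrac{1}{x}\nabla\psi_m\right)=-r_m^{3/2}xf(r_m^{5/2}\psi_m)$ rather than $-r_m^{2}xf(r_m^{5/2}\psi_m)$; this is harmless since either prefactor vanishes as $m\to\infty$.
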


	\begin{proof}
	Observe that $\operatorname{div}\left(\frac{1}{x}\nabla\psi_{m}\right)=-r_{m}^{3/2}xf(r_{m}^{5/2}\psi_{m})$ in $\left\{\psi_{m}>0\right\}$, which yields that
	\begin{align*}
	\operatorname{div}\left(\frac{1}{x}\nabla\psi_{0}\right)=0\quad\text{ in }\left\{\psi_{0}>0\right\}.
	\end{align*}
	
	In addition, we calculate that
	\begin{align*}
	\int_{\mathbb{R}_{+}^{2}} \frac{1}{x}|\nabla \psi_{m}|^{2} \eta \,dX&=-	\int_{\mathbb{R}_{+}^{2}}\psi_{m}\operatorname{div}\left(\frac{1}{x}\nabla\psi_{m}\eta\right) \,dX\\
	&=-\int_{\mathbb{R}_{+}^{2}}\psi_{m}\operatorname{div}\left(\frac{1}{x}\nabla\psi_{m}\right)\eta \,dX-\int_{\mathbb{R}_{+}^{2}} \psi_{m} \frac{1}{x} \nabla \psi_{m} \cdot \nabla \eta \,dX\\
	&=\int_{\mathbb{R}_{+}^{2}}\psi_{m}r_{m}^{3/2}xf(r_{m}^{5/2}\psi_{m})\eta \,dX-\int_{\mathbb{R}_{+}^{2}} \psi_{m} \frac{1}{x} \nabla \psi_{m} \cdot \nabla \eta \,dX\\
	&\rightarrow-\int_{\mathbb{R}_{+}^{2}} \psi_{0} \frac{1}{x} \nabla \psi_{0} \cdot \nabla \eta \,dX=\int_{\mathbb{R}_{+}^{2}} \frac{1}{x}\left|\nabla \psi_{0}\right|^{2} \eta \,dX,
	\end{align*}
	for any $\eta\in C^{1}_{0}(\mathbb{R}^{2}_{+})$.
	
	Then proceeding as in the proof of Lemma \ref{lemma:stagnations}, we complete the proof the lemma, the details of which we omit.
\end{proof}

The possible explicit forms of $\psi_{0}$ will be shown in the next proposition.
\begin{proposition}
	Assume that  $\psi$ is a weak solution satisfying Assumption 1.1, then the possible blow-up limits and the corresponding weighted densities are:
	
	either $$\psi_{0}(r\sin\theta,r\cos\theta)=C_{0}r^{5/2}\sin^{2}\theta P_{3/2}^{\prime}(\cos\theta)I_{\{(r\sin\theta,r\cos\theta)\mid 0<\theta<\theta^{*}\}}$$ and $$\mathscr{D}_{X_{0},\psi}^{xy}(0+)=-\int_{B_{1}^{+}\cap\{(r\sin\theta,r\cos\theta)\mid 0<\theta<\theta^{*}\}}xy\,dX,$$
	where $C_{0}>0$ is a unique constant, $P_{3/2}(z)$ is the Legendre function of the first kind, and $\theta^{*}:=\arccos z^{*}$, where $z^{*}\in(-1,0)$ is the unique solution $z\in(-1,1)$ of $P^{\prime}_{3/2}(z)=0$;
	
	or $$\psi_{0}=0$$ and $$\mathscr{D}_{X_{0},\psi}^{xy}(0+)\in\left\{0,-\int_{B_{1}^{+}}xy^{+}\,dX,-\int_{B_{1}^{+}}xy^{-}\,dX\right\}.$$
\end{proposition}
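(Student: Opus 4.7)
The plan is to adapt the argument of the Type 2 proposition (the proposition just above this one) with the homogeneity degree raised from $2$ to $5/2$, in order to absorb the additional weight coming from the Bernoulli condition at the origin. First, I would appeal to Lemma \ref{lemma:origin} to extract a strong $W^{1,2}_{w,loc}(\mathbb{R}^2_+)$ blow-up limit $\psi_0$, homogeneous of degree $5/2$, satisfying $\operatorname{div}(\frac{1}{x}\nabla \psi_0) = 0$ in $\{\psi_0>0\}$. Along a subsequence one also extracts a strong $L^1_{loc}$ limit $I_0$ of $I_{\{\psi_m>0\}}$ with $I_0\in\{0,1\}$ a.e.\ and $I_0=1$ on $\{\psi_0>0\}$, exactly as in Proposition \ref{pro:2.3}.

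Supposing $\{\psi_0>0\}$ non-empty, I would introduce the axisymmetric conjugate $\varPhi_0$ defined by $\partial_x\varPhi_0 = -\frac{1}{x}\partial_y\psi_0$ and $\partial_y\varPhi_0 = \frac{1}{x}\partial_x\psi_0$, and its three-dimensional lift $\varphi_0(x_1,x_2,x_3) := \varPhi_0(\sqrt{x_1^2+x_2^2},x_3)$, which is harmonic by the computation \eqref{equation:pro3.3.1}. Because $\psi_0$ is homogeneous of degree $5/2$, $\varphi_0$ is homogeneous of degree $3/2$; writing $\varphi_0(r,\theta) = r^{3/2}f(\cos\theta)$ in three-dimensional spherical coordinates and imposing $\Delta_{(r,\theta,\gamma)}\varphi_0 = 0$ reduces matters to the Legendre equation with parameter $a = 3/2$,
\begin{equation*}
(1-z^2)f''(z) - 2zf'(z) + \tfrac{15}{4}\,f(z) = 0, \qquad z = \cos\theta,
\end{equation*}
whose general solution, by Remark \ref{remark:LE}, is a linear combination of $P_{3/2}(z)$ and $P_{3/2}(-z)$.

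The third step is to recover $\psi_0$ from $\varphi_0$, determine the angular extent of the component $\{\psi_0>0\}$, and fix the constant. Using the Cauchy--Riemann-type relations and the identity $\sin\theta\,\partial_\theta P_{3/2}(\cos\theta) = -\sin^2\theta\,P'_{3/2}(\cos\theta)$, a solution $\varphi_0 = Cr^{3/2}P_{3/2}(\cos\theta)$ converts into
\begin{equation*}
\psi_0(r\sin\theta,r\cos\theta) = C_0\,r^{5/2}\sin^2\theta\,P'_{3/2}(\cos\theta)
\end{equation*}
on $\{\psi_0>0\}$. The containment $\partial\{\psi>0\}\subset\{x\geq 0, y\leq 0\}$ excludes the branch $P_{3/2}(-z)$ (which would give positivity near $\theta = \pi$) and forces one boundary angle to be $\theta_1 = 0$; the other angle $\theta^*$ is then the first zero of $P'_{3/2}(\cos\theta)$ in $(0,\pi)$, i.e.\ $z^* = \cos\theta^* \in (-1,0)$ with $P'_{3/2}(z^*)=0$. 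Uniqueness of such $z^*$ follows from a Sturm-type comparison for the associated Legendre ODE and the interlacing of zeros of $P_a$ and $P'_a$. Finally, the free boundary Bernoulli condition $|\nabla\psi_0|^2 = -x^2 y$ on $\partial\{\psi_0>0\}\cap\{x>0\}$, derived by testing the limiting variational identity with $\eta = \zeta\,\nu$ localized on smooth arcs of the free boundary as in Step 2 of Proposition \ref{pro:2.3}, fixes the positive constant $C_0$ uniquely.

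When $\{\psi_0>0\}$ is empty, the limiting variational identity degenerates to
\begin{equation*}
0 = \int_{\mathbb{R}^2_+}\bigl(xy\,I_0\,\operatorname{div}\eta + y\,I_0\,\eta_1\bigr)\,dX
\end{equation*}
for all admissible $\eta$, which shows that $I_0$ is locally constant on $\mathbb{R}^2_+\cap\{y>0\}$ and $\mathbb{R}^2_+\cap\{y<0\}$ separately. Since $I_0\in\{0,1\}$, the four possible sign combinations together with Lemma \ref{lemma:origin}(iii) produce exactly
\begin{equation*}
\mathscr{D}^{xy}_{X_0,\psi}(0+)\in\Bigl\{0,\ -\int_{B_1^+}xy^+\,dX,\ -\int_{B_1^+}xy^-\,dX\Bigr\}.
\end{equation*}
The main obstacle I anticipate is the non-integer Legendre analysis in the third step: the explicit passage from $\varphi_0$ to $\psi_0$ via identities for $P'_{3/2}$ and the Sturm comparison yielding a unique $z^*\in(-1,0)$ with $P'_{3/2}(z^*)=0$. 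The remaining bookkeeping is a routine adaptation of the Type 1 and Type 2 arguments with homogeneity degree $5/2$.
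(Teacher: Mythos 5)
Your overall strategy coincides with the paper's: pass to the conjugate function $\varPhi_0$, lift it to a harmonic, degree-$3/2$ homogeneous function $\varphi_0$ in three dimensions, reduce to the Legendre equation with parameter $3/2$ so that $f(z)=aP_{3/2}(z)+bP_{3/2}(-z)$, recover $\psi_0=C_0r^{5/2}\sin^2\theta\,P'_{3/2}(\cos\theta)$, fix $C_0$ from the limiting Bernoulli condition obtained by testing the variational identity with $\eta=\zeta\nu$, and handle the degenerate case through the constancy of $I_0$. All of that matches the paper's proof.

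There is, however, one genuine gap: your claim that the containment $\partial\{\psi>0\}\subset\{x\geq 0,\,y\leq 0\}$ by itself ``excludes the branch $P_{3/2}(-z)$.'' It does not, at least not directly. A connected component of $\{\psi_0>0\}$ of the form $\{\theta_1<\theta<\theta_2\}$ with $\tfrac{\pi}{2}\leq\theta_1<\theta_2\leq\pi$ has both of its boundary rays in the closed lower half-plane, so the containment alone cannot rule out a mixed solution with $a\neq 0$ and $b\neq 0$ (nor the pure $P_{3/2}(-z)$ cone attached to $\theta_2=\pi$). The substantive content of the paper's proof is precisely here: in its Case 3 it shows, using the free-boundary conditions $f'(z_1)=f'(z_2)=0$ and the Wronskian identity $P_{3/2}(z)P_{3/2}'(-z)-P_{3/2}(-z)P_{3/2}'(-z)=-\tfrac{2\sin(3\pi/2)}{\pi}\tfrac{1}{1-z^2}$, that the quotient $w(z)=P_{3/2}'(-z)/P_{3/2}'(z)$ is strictly increasing on each component of $(-1,1)\setminus\{z^*\}$ with $w(-z^*)=0$, whence $z_1\in(-z^*,1)$, i.e.\ $\theta_1<\pi-\theta^*<\tfrac{\pi}{2}$; similarly the pure $P_{3/2}(-z)$ case gives $\theta_1=\pi-\theta^*<\tfrac{\pi}{2}$. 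Only after this quantitative step does the containment of the free boundary in the lower half-plane produce a contradiction ($\operatorname{div}(\tfrac{1}{x}\nabla\psi_0)=0$ in $\{y>0\}$ while a free-boundary ray would have to lie in $\{y>0\}$). Your Sturm-comparison remark addresses the uniqueness of the zero $z^*$ of $P'_{3/2}$, which the paper simply quotes as known, but it does not supply this exclusion argument. A minor additional point: in the degenerate case the limiting identity should read $0=\int_{\mathbb{R}^2_+} I_0\operatorname{div}(xy\,\eta)\,dX=\int_{\mathbb{R}^2_+}(xy\,I_0\operatorname{div}\eta+y\,I_0\eta_1+x\,I_0\eta_2)\,dX$; you dropped the $x\,I_0\eta_2$ term, though the conclusion that $I_0$ is locally constant off $\{xy=0\}$ is unaffected.
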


\begin{proof}
	Using the fact that $\operatorname{div}\left(\frac{1}{x}\nabla\psi_{0}\right)=0$ in $\{\psi_{0}>0\}$ and proceeding as  in Section 4, we can define a function $\varphi_{0}(x_{1},x_{2},x_{3})$ in $\{\psi_{0}>0\}$ as $$\varphi_{0}(x_{1},x_{2},x_{3})=\varPhi_{0}\bigg(\sqrt{x_{1}^{2}+x_{2}^{2}},x_{3}\bigg),$$
	where $\varPhi_{0}(x,y)$ is defined by
	$$\frac{\partial \varPhi_{0}}{\partial x}=-\frac{1}{x}\frac{\partial\psi_{0}}{\partial y}\quad\text{ and }\quad \frac{\partial \varPhi_{0}}{\partial y}=\frac{1}{x}\frac{\partial\psi_{0}}{\partial x}.$$ Clearly, $\varphi_{0}$ is a harmonic function in $\left\{\psi_{0}>0\right\}$ and is a homogeneous function of degree $\frac{3}{2}$.

Then similar to the derivation of the equation \eqref{equation:pro3.3.2}, we could rewrite $\varphi_{0}(x_{1},x_{2},x_{3})$ as \begin{equation*}
	\varphi_{0}(r,\theta)=r^{3/2}f(\cos\theta),
\end{equation*}
and the function $f$ satisfies the Legendre differential equation
\begin{equation}\label{equation:pro4.3.1}
(1-z^{2})f''(z)-2zf'(z)+\frac{15}{4}f(z)=0\quad\text{ in }\{\psi_{0}>0\},
\end{equation}
where $z=\cos\theta$.

It follows from Section 7.2 in \cite{Lebedev} that $P_{3/2}(z)$ and $P_{3/2}(-z)$ are two linear independent solutions of the equation \eqref{equation:pro4.3.1}. Thus the function $f$ can be written as
\begin{equation*}
	f(z)=aP_{3/2}(z)+bP_{3/2}(-z)\quad\text{ in }\{\psi_{0}>0\},\quad z\in (-1,1),
\end{equation*}
where $a,b\in\mathbb{R}$. Moreover, the following facts are well-known and are helpful for our proof
\begin{equation}\label{equation:pro4.3.2}
	\begin{aligned}
		&(1)\, z\longmapsto P_{3/2}(z) \text{ is real-analytic in a neighborhood of }z=1,\\
		&(2)\, P_{3/2}(z)\sim\frac{1}{\pi}\log\left(\frac{1+z}{2}\right),P_{3/2}'(z)\sim-\frac{1}{\pi}\frac{1}{1+z}\text{ as }z\rightarrow-1,\\
		&(3)\, P_{3/2}'(z)\text{ has a unique root }z^{*}\in(-1,0) \text{ in }(-1,1),\\
		&(4)\,P_{3/2}'(z)<0\text{ for }z\in(-1,z^{*})\text{ and }P_{3/2}'(z)>0\text{ for }z\in(z^{*},1).
	\end{aligned}
\end{equation}

In the following, we assume that the set $\{\psi_{0}>0\}$ is not-empty. Considering again any connected component $V$ of $\{\psi_{0}>0\}$, let $\theta_{1},\theta_{2}\in[0,\pi]$ with $\theta_{1}<\theta_{2}$ be such that
\begin{equation*}
	\begin{cases}
		\psi_{0}(r\sin\theta,r\cos\theta)>0&\text{ for }\theta\in(\theta_{1},\theta_{2}),\\
		\psi_{0}(r\sin\theta_{i},r\cos\theta_{i})=0&\text{ for }i=1,2.
	\end{cases}
\end{equation*}
Besides, by using similar methods in Proposition \ref{pro:2.3}, one has that
\begin{equation}\label{equation:pro4.3.3}
	\int_{V}x\lvert\nabla\varphi_{0}\rvert^{2}\,dX=\int_{V}\frac{1}{x}\lvert\nabla\psi_{0}\rvert^{2}\,dX<+\infty.
\end{equation}
and  if $0<\theta_{1}<\theta_{2}<\pi$, then
\begin{equation}\label{equation:pro4.3.4}
	f'(z_{i})=0,\text{ where }z_{i}=\cos\theta_{i}\quad\text{ for }i=1,2.
\end{equation}

We claim that the case $\theta_{1}=0$ and $\theta_{2}=\pi$ is impossible. In fact, if $\theta_{1}=0$, then $z_{1}=1$. It follows from the fact (2) in \eqref{equation:pro4.3.2} that
\begin{equation*}
	P_{3/2}'(-z)\sim\frac{1}{\pi}\frac{1}{1-z}\quad\text{ as }z\rightarrow1,
\end{equation*}
which implies that $b=0$. Otherwise, we obtain a contradiction to \eqref{equation:pro4.3.3}. Similarly, if $\theta_{2}=\pi$, we have that $a=0$. Thus the proof of the claim is completed.

We now have three possible cases.

\textbf{ Case 1.} $\theta_{1}=0$, then $b=0$.  One has that
\begin{equation*}
	f(z)=aP_{3/2}(z)\quad\text{ for }z\in(z^{*},1).
\end{equation*}
On the other hand, by virtue of the fact that $\theta_{2}\neq \pi$, \eqref{equation:pro4.3.4} implies that
\begin{equation*}
	f'(z_{2})=0,
\end{equation*}
which, together with (3) and (4) in \eqref{equation:pro4.3.2}, yields that
\begin{equation*}
	z_{2}=z^{*}:=\cos\theta^{*},
\end{equation*}
that is,
\begin{equation*}
	\theta_{1}=0,\quad\theta_{2}=\theta^{*}\text{ and }f(z)=aP_{3/2}(z)\text{ for }z\in(z^{*},1).
\end{equation*}

\textbf{ Case 2.}   $\theta_{2}=\pi$, then $a=0$. Along the proof of Case 1, one has that
\begin{equation}
\theta_{1}=\pi-\theta^{*},\quad\theta_{2}=\pi\text{ and }	f(z)=bP_{3/2}(-z)\quad\text{ for }s\in(-1,-z^{*}).
\end{equation}

\textbf{ Case 3.} $0<\theta_{1}<\theta_{2}<\pi$, then $f'(z_{1})=f'(z_{2})=0$.

First, we claim that
\begin{equation*}
	a\neq 0\text{ and }b\neq 0.
\end{equation*}
Indeed, suppose that $a= 0$ and $b\neq 0$, then \begin{equation*}
	P_{3/2}'(-z_{1})=P_{3/2}'(-z_{2})=0,
\end{equation*}
a contradiction to the fact (3) in \eqref{equation:pro4.3.2}. Similar considerations apply to the case when $a\neq0$ and $b=0$.

Next, we would like to show that
\begin{equation*}
	z_{1}\neq z^{*}\text{ and }z_{2}\neq z^{*}.
\end{equation*}
Suppose to the contrary  that $z_{1}=z^{*}$, then \eqref{equation:pro4.3.4} gives that $-bP_{3/2}'(-z_{1})=0$, which implies that $-z_{1}=z^{*}$.

Thus we obtain that
\begin{equation*}
	z_{1}=-z_{1}=z^{*}=0.
\end{equation*}
However, this contradicts with the fact that $z^{*}\in (-1,0)$. If $z_{2}=z^{*}$, by using the same argument, we could lead also a contradiction.

In the following, we want to estimate the range of $\theta_{1}$ and $\theta_{2}$. Observing that
\begin{equation*}
	\frac{P_{3/2}'(-z_{1})}{P_{3/2}'(z_{1})}=\frac{P_{3/2}'(-z_{2})}{P_{3/2}'(z_{2})}=\frac{a}{b},
\end{equation*}
we introduce the function $w(z):(-1,1)\backslash\left\{z^{*}\right\}\rightarrow\mathbb{R}$ defined by
\begin{equation*}
	w(z)=\frac{P_{3/2}'(-z)}{P_{3/2}'(z)}.
\end{equation*}
It follows from \eqref{equation:pro4.3.2} that
\begin{equation}\label{equation:pro4.3.5}
	\begin{aligned}
		\lim\limits_{z\rightarrow-1}w(z)=0,	\lim\limits_{z\rightarrow z^{*}-}w(z)=+\infty,	\lim\limits_{z\rightarrow z^{*}+}w(z)=-\infty\text{ and }	\lim\limits_{z\rightarrow1}w(z)=+\infty.
	\end{aligned}
\end{equation}

On the other hand, we compute that
\begin{equation}\label{equation:pro4.3.7}
	\begin{aligned}
		w'(z)&=\frac{P_{3/2}''(-z)P_{3/2}'(z)-P_{3/2}''(z)P_{3/2}'(-z)}{(P_{3/2}'(z))^{2}}\\
		&=\frac{15}{4}\frac{1}{1-z^{2}}\frac{P_{3/2}(z)P_{3/2}'(-z)-P_{3/2}(-z)P_{3/2}'(-z)}{(P_{3/2}'(z))^{2}}\\
		&=\frac{15}{2\pi}\frac{1}{(1-z^{2})^{2}}\frac{1}{(P_{3/2}'(z))^{2}}>0,
	\end{aligned}
\end{equation}
where we have used the facts that $P_{3/2}(z)$ and $P_{3/2}(-z)$ are solutions of the Legendre differential equation \eqref{equation:pro4.3.1} and that
\begin{equation*}
	P_{3/2}(z)P_{3/2}'(-z)-P_{3/2}(-z)P_{3/2}'(-z)=-\frac{2\sin\left(\frac{3}{2}\pi\right)}{\pi}\frac{1}{1-z^{2}}
\end{equation*}
( see Section 7.7 in \cite{Lebedev}).

Besides, we derive from \eqref{equation:pro4.3.5} that
\begin{equation*}
	\lim\limits_{z\rightarrow -z^{*}-}w(z)=	\lim\limits_{z\rightarrow -z^{*}+}w(z)=0,
\end{equation*}
which implies that
\begin{equation}\label{equation:pro4.3.6}
	w\left(-z^{*}\right)=0.
\end{equation}

Combing with \eqref{equation:pro4.3.5}, \eqref{equation:pro4.3.7} and \eqref{equation:pro4.3.6}, one has that
\begin{equation*}
	z_{1}\in(-z^{*},1)\quad\text{ and }\quad z_{2}\in(-1,z^{*}),
\end{equation*}
that is,
\begin{equation*}
	\theta_{1}\in (0,\pi-\theta^{*})\subset\left(0,\frac{\pi}{2}\right)\text{ and }	\theta_{2}\in (\theta^{*},\pi)\subset\left(\frac{\pi}{2},\pi\right).
\end{equation*}

Furthermore, since $w(z_{1})=w(z_{2})$,  the non-zero constants $a$ and $b$ can be determined uniquely by the equations $f'(z_{1})=f'(z_{2})=0$.

However, since $\partial\{\psi>0\}$ is contained in the lower half-plane, one has that
\begin{equation*}
	\operatorname{div}\left(\frac{1}{x}\nabla\psi_{0}\right)=0\quad\text{ in }\{y>0\},
\end{equation*}
which implies that Case 2 and Case 3 are impossible.

Therefore, we obtain that for any connected component $V$ of $\{\psi_{0}>0\}$, $\theta_{1}=0$ and $\theta_{2}=\theta^{*}$. Moreover, $f(z)=aP_{3/2}(z)$ for $z\in(z^{*},1)$ with $a\neq 0$. Meanwhile,
\begin{equation*}
	\varphi_{0}(r\sin\theta,r\cos\theta)=ar^{3/2}P_{3/2}(\cos\theta)I_{\{(r\sin\theta,r\cos\theta)\mid0<\theta<\theta^{*}\}},
\end{equation*}
which implies that
\begin{equation}\label{equation:pro4.3.8}
	\psi_{0}(r\sin\theta,r\cos\theta)=\frac{2}{5}ar^{5/2}\sin^{2}\theta P_{3/2}'(\cos\theta)I_{\{(r\sin\theta,r\cos\theta)\mid0<\theta<\theta^{*}\}}.
\end{equation}

In what follows, preceding as in the proof of Proposition \ref{pro:2.3}, for any
$\eta=(\eta_{1},\eta_{2})\in C_{0}^{1}(\mathbb{R}^{2},\mathbb{R}^{2})$ with $\eta_{1}=0$ on $\{x=0\}$, one has that
\begin{equation}\label{equation:pro4.3.9}
	\begin{aligned}
		0=&\int_{\mathbb{R}^{2}_{+}}\left(\frac{1}{x}\left\lvert\nabla\psi_{0}\right\rvert^{2}\nabla\cdot\eta-\frac{2}{x}\nabla\psi_{0}D\eta\cdot\nabla\psi_{0}-\frac{1}{x^{2}}\lvert\nabla\psi_{0}\rvert^{2}\eta_{1}\right)\,dX\\&-\int_{\mathbb{R}^{2}_{+}}(xyI_{0}\operatorname{div}\eta+yI_{0}\eta_{1}+xI_{0}\eta_{2})\,dX,
	\end{aligned}
\end{equation}
where $I_{0}$ is the strong $L_{loc}^{1}$-limit of $I_{\{\psi_{m}>0\}}$ along a subsequence.

Now, we consider the following cases.

\textbf{ Case 1.} There exists one connected component of $\{\psi_{0}>0\}$. Then $\psi_{0}$ is given by \eqref{equation:pro4.3.8}.

For any point $X_{0}=(r\sin\theta^{*},r\cos\theta^{*})\in\partial\{\psi_{0}>0\}$, there exist some small $r_{0}>0$ such that the outer normal to $\partial\{\psi_{0}>0\}$ has the constant normal $\nu(X_{0})$ in $B_{r_{0}}(X_{0})$. Taking $\eta(X):=\zeta(X)\nu(X_{0})$ for any $\zeta\in C_{0}^{1}(B_{r_{0}}(X_{0}))$ and putting $\eta(X)$ into \eqref{equation:pro4.3.9}, one has that
\begin{equation*}
	\int_{B_{r_{0}}(X_{0})\cap\partial\{\psi_{0}>0\}}\frac{1}{x}\lvert\nabla\psi_{0}\rvert^{2}\zeta\,d\mathcal{H}^{1}=\int_{B_{r_{0}}(X_{0})\cap\partial\{\psi_{0}>0\}}xy(1-\bar{I}_{0})\zeta\,d\mathcal{H}^{1},
\end{equation*}
where $\bar{I}_{0}$ denotes the constant of $I_{0}$ in $\{\psi_{0}=0\}$.

It follows from \eqref{equation:pro4.3.9} that $\bar{I}_{0}\in\{0,1\}$. On the other hand, the Hopf's lemma gives that $\lvert\nabla\psi_{0}\rvert\neq 0$ on $B_{r_{0}}(X_{0})\cap\partial\{\psi_{0}>0\}$, which implies that $\bar{I}_{0}=0$. Hence, we obtain that
\begin{equation*}
	\lvert\nabla\psi_{0}\rvert^{2}=x^{2}y^{2}\quad\text{ on }\partial\{\psi_{0}>0\}\cap\{x>0\},
\end{equation*}
which determine uniquely $a$ in \eqref{equation:pro4.3.8}.

And the corresponding density is
\begin{equation*}
	\mathscr{D}_{X_{0},\psi}^{xy}(0+)=-\int_{B_{1}^{+}\cap\{(r\sin\theta,r\cos\theta)\mid 0<\theta<\theta^{*}\}}xy\,dX.
\end{equation*}

\textbf{ Case 2.} $\psi_{0}(x,y)\equiv0$. \eqref{equation:pro4.3.9} gives that
$\bar{I}_{0}$ is a constant in each connected component of $\{\psi_{0}=0\}\cap\{y\neq 0\}$, either $1$ or $0$. Therefore, by using the similar arguments in the Proposition in \ref{pro:2.3}, the corresponding densities are
\begin{equation*}
	\mathscr{D}_{X_{0},\psi}^{xy}(0+)\in\left\{0,-\int_{B_{1}^{+}}xy^{+}\,dX,-\int_{B_{1}^{+}}xy^{-}\,dX\right\}.
\end{equation*}
\end{proof}

Now, as in the proof of Theorem \ref{theorem:1}, we can derive Theorem \ref{theorem:3}.

\begin{proof}
	We follow the same notations in Theorem \ref{theorem:1}. Here however we claim that
	\begin{equation*}
		A_{\pm}\subset\left\{\frac{\pi}{2},\theta^{*},\pi\right\}.
	\end{equation*}
	
	We choose the ball $B_{\rho}$ to satisfy
	\begin{equation*}
		B_{\rho}\cap(\{x=0\}\cup\left\{y=0\right\}\cup\{(r\sin\theta^{*},r\cos\theta^{*})\mid r\in\mathbb{R}_{+}\})=\emptyset.
	\end{equation*}
	Noting that
	\begin{equation*}
		\operatorname{div}\left(\frac{1}{x}\nabla\psi_{m}\right)\left(B_{\rho}\right)\rightarrow \operatorname{div}\left(\frac{1}{x}\nabla\psi_{0}\right)\left(B_{\rho}\right)=0,
	\end{equation*}
and  that the curve length of  $B_{\rho}\cap\partial\{\psi_{m}>0\}$  is at least $2\rho-o(1)$, we lead a contradiction that
\begin{equation*}
	\operatorname{div}\left(\frac{1}{x}\nabla\psi_{m}\right)(B_{r_{0}})>0.
\end{equation*}

Meanwhile, it is clear that $A_{+}$ and $A_{-}$ are connected sets. Thus we define that
\begin{equation*}
	a_{+}:=-\lim\limits_{t_{m}\rightarrow0+}\arg(g(t_{m})-g(t_{0}))+\frac{\pi}{2}\text{ and } a_{-}:=-\lim\limits_{t_{m}\rightarrow0-}\arg(g(t_{m})-g(t_{0}))+\frac{\pi}{2}.
\end{equation*}

Therefore, there are three possible cases.

\textbf{ Case 1.}  If $\psi_{0}(r\sin\theta,r\cos\theta)=C_{0}r^{5/2}\sin^{2}\theta P_{3/2}^{\prime}(\cos\theta)I_{\{(r\sin\theta,r\cos\theta)\mid 0<\theta<\theta^{*}\}}$, then $a_{+}=\theta^{*}$. In fact, we choose a ball $B_{1}:=B_{\frac{1}{10}}(r\sin\theta^{*},r\sin\theta^{*})$. One has that
   \begin{equation*}
	\operatorname{div}\left(\frac{1}{x}\nabla\psi_{m}\right)\left(B_{1}\right)>0,
\end{equation*}
which  implies that
\begin{equation*}
	A_{+}=\left\{\theta^{*}\right\}.
\end{equation*}

\textbf{ Case 2.} If $\psi_{0}\equiv0$  and $\mathscr{D}^{xy}_{X_{0},\psi}(0+)\in\left\{-\int_{B_{1}^{+}}xy^{+}\,dX,-\int_{B_{1}^{+}}xy^{-}\,dX\right\}$, then $a_{+}=\frac{\pi}{2}$.

\textbf{ Case 3.} If $\psi_{0}\equiv0$  and $\mathscr{D}^{xy}_{X_{0},\psi}(0+)=0$, then  $a_{+}=a_{-}=\pi$.
\end{proof}

\vskip .4in

\end{document}